\newtheorem{theorem}{Theorem}[section]
\newtheorem{corollary}[theorem]{Corollary}
\newtheorem{lemma}[theorem]{Lemma}
\newtheorem{proposition}[theorem]{Proposition}
\newtheorem{example}[theorem]{Example}
\newcommand{\Ker}{{\rm Ker}\,}
\newcommand{\Mm}{\mathcal{M}}
\def\CC{{\mathbb C}}
\def\ZZ{{\mathbb Z}}
\begin{document}

\sloppy

\title[Quiver algebras and coalgebras]{Quiver Algebras, Path Coalgebras and co-reflexivity}

\subjclass[2010]{16T15, 16T05, 05C38, 06A11, 16T30} \keywords{quiver
algebra, incidence algebra, incidence coalgebra, path coalgebra,
coreflexive coalgebra}

\dedicatory{\it Dedicated to our friend Margaret Beattie on the
occasion of her retirement}

\begin{abstract}
We study the connection between two combinatorial notions associated
to a quiver: the quiver algebra and the path coalgebra. We show that
the quiver coalgebra can be recovered from the quiver algebra as a
certain type of finite dual, and we show precisely when the path
coalgebra is the classical finite dual of the quiver algebra, and
when all finite dimensional quiver representations arise as
comodules over the path coalgebra. We discuss when the quiver
algebra can be recovered as the rational part of the dual of the
path coalgebra. Similar results are obtained for incidence
(co)algebras. We also study connections to the notion of coreflexive
(co)algebras, and give  a partial answer to an open problem
concerning tensor products of coreflexive coalgebras.
\end{abstract}

\author{S.D\u{a}sc\u{a}lescu${}^1$, M.C. Iovanov${}^{1,2}$, C. N\u{a}st\u{a}sescu${}^1$}
\address{${}^1$University of Bucharest, Facultatea de Matematica\\ Str.
Academiei 14, Bucharest 1, RO-010014, Romania}
\address{${}^2$University of Southern California \\
3620 S Vermont Ave, KAP 108 \\
Los Angeles, CA 90089, USA}
\address{e-mail: sdascal@fmi.unibuc.ro, iovanov@usc.edu, yovanov@gmail.com} \address{Constantin\_nastasescu@yahoo.com}

\date{}

\maketitle

\section{Introduction and Preliminaries}

Let $\Gamma$ be a quiver, and let $K$ be an arbitrary ground field,
which will be fixed throughout the paper. The associated quiver
algebra $K[\Gamma]$ is an important object studied extensively in
representation theory, and one theme in the field is to relate and
understand combinatorial properties of the quiver via the properties
of the category of representations of the quiver, and vice versa.
Quiver algebras also play a role in general representation theory of
algebras; for example, every finite dimensional pointed algebra is a
quiver algebra ``with relations". A closely related object is the
path coalgebra $K\Gamma$, introduced in \cite{cm}, together with its
comodules (quiver corepresentations). Comodules over path coalgebras
turn out to form a special kind of representations of the quiver,
called locally nilpotent representations in \cite{chin}. A natural
question arises then: what is the precise connection between the two
objects $K[\Gamma]$ and $K\Gamma$.  We aim to provide such
connections, by finding out when one of these objects can be
recovered from the other one. This is also important from the
following viewpoint: one can ask when the finite dimensional locally
nilpotent representations of the quiver (i.e., quiver
corepresentations), provide all the finite dimensional quiver
representations. This situation will be exactly the one in which the
path coalgebra is recovered from the quiver algebra by a certain
natural construction involving representative functions, which we
recall below.

Given a coalgebra $C$, its dual $C^*$ is always an algebra. Given an
algebra $A$, one can associate a certain subspace $A^0$ of the dual
$A^*$, which  has a coalgebra structure. This is called the finite
dual of $A$, and it plays an important role in the representation
theory of $A$, since the category of locally finite left $A$-modules
(i.e., modules which are sums of their finite dimensional
submodules) is isomorphic to the category of right $A^0$-comodules
(see, for example, \cite{G}). $A^0$ is sometimes also called the
coalgebra of representative functions, and consists of all
$f:A\rightarrow K$ whose kernel contains a cofinite (i.e., having
finite codimension) ideal. We show that the path coalgebra $K\Gamma$
can be reconstructed from the quiver algebra $K[\Gamma]$ as a
certain type of ``graded" finite dual, that is, $K\Gamma$ embeds in
the dual space $K[\Gamma]^*$ as the subspace of linear functions
$f:K[\Gamma]\rightarrow K$ whose kernel contains a cofinite monomial
ideal. This is an ``elementwise" answer to the recovery problem; its
categorical analogue states that the comodules over the quiver
coalgebra are precisely those quiver representations in which the
annihilator of every element contains a cofinite monomial ideal. In
order to connect these to the classical categorical duality, we
first note that
 in general the quiver algebra does not have identity, but it has enough idempotents. Therefore,
we first extend the construction of the finite dual to algebras with
enough idempotents (Section \ref{sectdualfinit}). To such an algebra
$A$ we associate a coalgebra $A^0$ with counit, and we show that the
category of right $A^0$-comodules is isomorphic to the category of
locally finite unital $A$-modules. In Section \ref{sectquiverpath}
we show that the path coalgebra $K\Gamma$ embeds in $K[\Gamma]^0$,
and we prove that this embedding is an isomorphism, i.e., the path
coalgebra can be recovered as the finite dual of the quiver algebra,
if and only if the quiver has no oriented cycles and there are
finitely many arrows between any two vertices. On the other hand,
$K[\Gamma]$ embeds as an algebra without identity in the dual
algebra $(K\Gamma)^*$ of the path coalgebra. We show that the image
of this embedding is the rational (left or right) part of
$(K\Gamma)^*$, i.e., the quiver algebra can be recovered as the
rational part of the dual of the path coalgebra, if and only if for
any vertex $v$ of $\Gamma$ there are finitely many paths starting at
$v$ and finitely many paths ending at $v$. This is also equivalent
to the fact that $K\Gamma$ is a left and right semiperfect
coalgebra.

In Section \ref{sectincidence} we obtain similar results for another
class of (co)algebras which are also objects of great combinatorial
interest, namely for incidence (co)algebras. See \cite{jr}, for
instance. We show that the incidence coalgebra of a partially
ordered set $X$ is always the finite dual of a subalgebra $FIA(X)$
of the incidence algebra which consists of functions of finite
support. In this setting, this algebra $FIA(X)$ is the natural
analogue of the quiver algebra.

 It is also interesting
to know when can $K\Gamma$ be recovered from $(K\Gamma)^*$, and how
this relates to the results of Section \ref{sectquiverpath}.  This
problem is related to an important notion in coalgebra theory, that
of coreflexive coalgebra. A coalgebra $C$ over $K$ is coreflexive if
the natural coalgebra embedding $C\rightarrow (C^*)^0$ is an
isomorphism. In other words, $C$ is coreflexive if it can be
completely recovered from its dual. In Section \ref{s5} we aim to
study this condition for path coalgebras and their subcoalgebras,
and give the connection with the results of Section
\ref{sectquiverpath}. We show that, in fact, a path coalgebra of a
quiver with no loops and finitely many arrows between any two
vertices is not necessarily coreflexive, and also, that the quivers
of coreflexive path coalgebras can contain loops. We then prove a
general result stating that under certain conditions a coalgebra $C$
is coreflexive if and only if its coradical is coreflexive. In
particular, this result holds for subcoalgebras of a path coalgebra
$K\Gamma$ with the property that there are finitely many paths
between any two vertices of $\Gamma$. The result  applies in
particular to incidence coalgebras. For both a path coalgebra and an
incidence coalgebra  the coradical is a grouplike coalgebra (over
the set of vertices of the quiver for the first one, or the
underlying ordered set for the second one). Thus the coreflexivity
of such a coalgebra reduces to the coreflexivity of a grouplike
coalgebra $K^{(X)}$. By the results of Heyneman and Radford
\cite[Theorem 3.7.3]{HR},  if $K$ is an infinite field, then
$K^{(X)}$ is coreflexive for most sets in $X$ in set theory and any
set of practical use (see Section 5).

We use our results to give a partial answer to a question of
E.J.~Taft and D.E.~Radford asking whether the tensor product of two
coreflexive coalgebras is coreflexive. In particular, we show that
the tensor product of two coreflexive pointed coalgebras, which
embed in path coalgebras of quivers with only finitely many paths
between any two vertices, is coreflexive.

Throughout the paper $\Gamma=(\Gamma_0,\Gamma_1)$ will be a quiver.
$\Gamma_0$ is the set of vertices, and $\Gamma_1$ is the set of
arrows of $\Gamma$. If $a$ is an arrow from the vertex $u$ to the
vertex $v$, we denote $s(a)=u$ and $t(a)=v$. A path in $\Gamma$ is a
finite sequence of arrows $p=a_1a_2\ldots a_n$, where $n\geq 1$,
such that $t(a_i)=s(a_{i+1})$ for any $1\leq i\leq n-1$. We will
write $s(p)=s(a_1)$ and $t(p)=t(a_n)$. Also the length of such a $p$
is $n$. Vertices $v$ in $\Gamma_0$ are also considered as paths of
length zero, and we write $s(v)=t(v)=v$. If $p$ and $q$ are two
paths such that $t(p)=s(q)$, we consider the path $pq$ by taking the
arrows of $p$ followed by the arrows of $q$. We denote by $K\Gamma$
the path coalgebra, which is the vector space with a basis
consisting of all paths in $\Gamma$, comultiplication $\Delta$
defined by $\Delta(p)=\sum _{qr=p}q\otimes r$ for any path $p$, and
counit $\epsilon$ defined by $\epsilon(v)=1$ for any vertex $v$, and
$\epsilon(p)=0$ for any path of positive length. The underlying
space of $K\Gamma$ can be also endowed with a structure of an
algebra, not necessarily with identity, with the multiplication
defined such that the product of two paths $p$ and $q$ is $pq$ if
$t(p)=s(q)$, and 0 otherwise. We denote this algebra by $K[\Gamma]$;
this is known in literature as the quiver algebra or the path
algebra of $\Gamma$. It has identity if and only if $\Gamma_0$ is
finite, and in this case the sum of all vertices is the identity.

Besides the above mentioned recovery connections between quiver
algebras and path coalgebras, one can also ask whether there is any
compatibility between them. More precisely, when do the two
structures on the same vector space $K\Gamma$ give rise to a
bialgebra structure. This turns out to be only the case for very
special quivers. Specifically, consider $K[\Gamma]$ to be the vector
space with basis the oriented paths of $\Gamma$, and with the quiver
algebra and path coalgebra structures. Then $K[\Gamma]$ is a
bialgebra (with enough idempotents in general) if and only if in
$\Gamma$ there are no (directed) paths of length $\geq 2$ and no
multiple edges between vertices (i.e. for any two vertices $a,b$ of
$\Gamma$ there is at most one edge from $a$ to $b$). Indeed,
straightforward computations show that whenever multiple edges
$\bullet\stackrel{x,y}{\Longrightarrow}\bullet$ or paths
$\bullet\stackrel{x}{\longrightarrow}\bullet
\stackrel{y}{\longrightarrow}\bullet$ of length at least 2 occur,
then $\Delta(xy)\neq \Delta(x)\Delta(y)$. Conversely, a case by case
computation for $\Delta(pq)$ with $p,q$ paths of possible length $0$
or $1$ will show that $\Delta(pq)=\Delta(p)\Delta(q)$.

This shows that the relation between the path coalgebra and quiver
algebra is more of a dual nature than an algebraic compatibility.
For basic terminology and notation about coalgebras and comodules we
refer to \cite{DNR}, \cite{mo}, and \cite{sw}. All (co)algebras and
(co)modules considered here will be vector spaces over $K$, and
duality $(-)^*$ represents the dual $K$-vector space.

\section{The finite dual of an algebra with enough idempotents}\label{sectdualfinit}

In this section we extend the construction of the finite dual of an
algebra with identity to the case where $A$ does not necessarily
have a unit, but it has enough idempotents. Throughout this section
we consider a $K$-algebra $A$, not necessarily having a unit, but
having a system $(e_{\alpha})_{\alpha \in R}$ of pairwise orthogonal
 idempotents, such that $A=\oplus_{\alpha \in R}Ae_{\alpha}=\oplus_{\alpha
\in R}e_{\alpha}A$. Such an algebra is said to have ``enough
idempotents", and it is also called an algebra with a complete
system of orthogonal idempotents in the literature. Let us note that
$A$ has local units, i.e., if $a_1,\ldots ,a_n\in A$, then there
exists an idempotent $e\in A$ (which can be taken to be the sum of
some $e_{\alpha}$'s) such that $ea_i=a_ie=a_i$ for any $1\leq i\leq
n$. Our aim is to show that there exists a natural structure of a
coalgebra (with counit) on the space
$$A^0=\{ f\in A^*|\; \Ker(f)\; \mbox{contains an ideal of }A\mbox{ of finite
codimension}\}.$$ We will call $A^0$ the finite dual of the algebra
$A$.

\begin{lemma} \label{lemaidealcofinit}
Let $I$ be an ideal of $A$ of finite codimension. Then the set
$R'=\{\alpha \in R|\; e_{\alpha}\notin I\}$ is finite.
\end{lemma}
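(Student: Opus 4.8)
The plan is to argue by contradiction using the finite-codimensionality of $I$ together with the fact that the idempotents $e_\alpha$ are pairwise orthogonal. Suppose $R'$ is infinite. Then I would pick a countably infinite subset $\{\alpha_1,\alpha_2,\ldots\}\subseteq R'$, so that each $e_{\alpha_i}\notin I$. Consider the images $\overline{e_{\alpha_i}}$ of these idempotents in the quotient vector space $A/I$. Since $A/I$ is finite dimensional, these images must be linearly dependent; in fact, more strongly, I want to exploit that we can produce infinitely many linearly independent elements of $A/I$, which is the contradiction.

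The key step is to turn the infinitely many idempotents into infinitely many linearly independent vectors mod $I$. Here is where orthogonality does the work: if a finite linear combination $\sum_{i=1}^n \lambda_i e_{\alpha_i}$ lies in $I$, then multiplying on the left (or right) by a fixed $e_{\alpha_j}$ with $j\le n$ gives $\lambda_j e_{\alpha_j}\in I$ (since $e_{\alpha_j}e_{\alpha_i}=\delta_{ij}e_{\alpha_j}$ and $I$ is an ideal). Because $e_{\alpha_j}\notin I$, this forces $\lambda_j=0$. Hence the images $\overline{e_{\alpha_1}},\overline{e_{\alpha_2}},\ldots$ are linearly independent in $A/I$, contradicting $\dim_K(A/I)<\infty$. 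Therefore $R'$ is finite.

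The main (and really only) obstacle is making sure the ideal property is invoked correctly: one needs that $e_{\alpha_j}\bigl(\sum_i \lambda_i e_{\alpha_i}\bigr)\in I$ whenever $\sum_i\lambda_i e_{\alpha_i}\in I$, which holds since $I$ is a left ideal, and then that this product simplifies to $\lambda_j e_{\alpha_j}$ by orthogonality. No use of $A=\bigoplus_\alpha Ae_\alpha=\bigoplus_\alpha e_\alpha A$ or of local units is needed for this particular lemma; just pairwise orthogonality of the $e_\alpha$ and the fact that each $e_{\alpha}$ for $\alpha\in R'$ is a nonzero element of $A/I$. The whole argument is short, so I would present it directly rather than via an explicit choice of countable subset: any infinite subset of $R'$ yields infinitely many $K$-linearly independent vectors in the finite-dimensional space $A/I$, which is absurd.
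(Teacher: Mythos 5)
Your proof is correct and is essentially the same as the paper's: both show that the classes $\widehat{e_{\alpha}}$, $\alpha\in R'$, are linearly independent in $A/I$ by multiplying a vanishing combination by a fixed $e_{\alpha}$ and using orthogonality together with the ideal property, then conclude from $\dim_K(A/I)<\infty$. The framing as a contradiction via an infinite subset is a cosmetic difference only.
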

\begin{proof}
Denote by $\hat{a}$ the class of an element $a\in A$ in the factor
space $A/I$. We have that $(\widehat{e_{\alpha}})_{\alpha\in R'}$ is
linearly independent in $A/I$. Indeed, if $\sum_{\alpha\in
R'}\lambda_{\alpha}\widehat{e_{\alpha}}=0$, then $\sum_{\alpha\in
R'}\lambda_{\alpha}e_{\alpha}\in I$. Multiplying by some
$e_{\alpha}$ with $\alpha \in R'$, we find that
$\lambda_{\alpha}e_{\alpha}\in I$, so then necessarily
$\lambda_{\alpha}=0$. Since $A/I$ is finite dimensional, the set
$R'$ must be finite.
\end{proof}

Assume now that $B$ is another algebra with enough idempotents, say
that $(f_{\beta})_{\beta \in S}$ is a system of orthogonal
idempotents in $B$ such that $B=\oplus_{\beta \in
S}Bf_{\beta}=\oplus_{\beta \in S}f_{\beta}B$.

\begin{lemma}\label{lemaIJ}
Let $H$ be an ideal of $A\otimes B$ of finite codimension. Let $I=\{
a\in A|\; a\otimes B\subseteq H\}$ and $J=\{ b\in B|\; A\otimes
b\subseteq H\}$. Then $I$ is an ideal of $A$ of finite codimension,
$J$ is an ideal of $B$ of finite codimension and $I\otimes
B+A\otimes J\subseteq H$.
\end{lemma}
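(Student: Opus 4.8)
The plan is to handle the three claims in turn, with the codimension bounds being the real content. First I would observe that both $I$ and $J$ are clearly two-sided ideals: if $a \in I$ and $x \in A$, then $(xa) \otimes B = (x \otimes e)(a \otimes B) \subseteq H$ for a suitable local unit $e \in B$ with $eb = be = b$ on a spanning set — more cleanly, $(xa)\otimes b = (x \otimes f_\beta)(a \otimes f_\beta b) + \ldots$; since $a \otimes B \subseteq H$ and $H$ is an ideal of $A \otimes B$, we get $(x \otimes 1_{\text{loc}})(a \otimes b) \in H$, and local units let us conclude $xa \otimes b \in H$. The same works on the right and symmetrically for $J$. The inclusion $I \otimes B + A \otimes J \subseteq H$ is then immediate from the definitions: $I \otimes B \subseteq H$ and $A \otimes J \subseteq H$ by construction of $I$ and $J$.

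The crux is finite codimension. Here I would use Lemma~\ref{lemaidealcofinit} applied to $A \otimes B$, which has enough idempotents via the system $(e_\alpha \otimes f_\beta)_{(\alpha,\beta) \in R \times S}$. That lemma tells us the set $T = \{(\alpha,\beta) : e_\alpha \otimes f_\beta \notin H\}$ is finite. Let $R_1$ be the (finite) set of first coordinates appearing in $T$ and $S_1$ the set of second coordinates. The key claim is that for $\alpha \notin R_1$ one has $e_\alpha \in I$: indeed for such $\alpha$ and every $\beta$ we have $e_\alpha \otimes f_\beta \in H$, hence $e_\alpha \otimes f_\beta B \subseteq H$ (as $H$ is an ideal), and summing over $\beta$ gives $e_\alpha \otimes B = \bigoplus_\beta e_\alpha \otimes f_\beta B \subseteq H$, so $e_\alpha \in I$. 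Therefore $A(1 - \sum_{\alpha \in R_1} e_\alpha)$... but $A$ may lack a unit, so instead I argue: $A = \bigoplus_{\alpha \in R} e_\alpha A$, and for $\alpha \notin R_1$, $e_\alpha A \subseteq I$ since $e_\alpha \in I$ and $I$ is an ideal. Hence $A/I$ is a quotient of $\bigoplus_{\alpha \in R_1} e_\alpha A$. This alone does not bound the codimension, since each $e_\alpha A$ can be infinite dimensional; I need to also use that $H$ has finite codimension directly.

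So the sharper argument: consider the linear map $\phi : A \to (A \otimes B / H)^{\oplus}$ obtained by $a \mapsto (\overline{a \otimes f_\beta})_{\beta \in S_1}$ where $S_1$ is chosen as above. I claim $\Ker \phi \subseteq I$, equivalently: if $a \otimes f_\beta \in H$ for all $\beta \in S_1$, then $a \otimes B \subseteq H$. For $\beta \notin S_1$ we need $a \otimes f_\beta B \subseteq H$; writing $a = \sum_{\alpha} e_\alpha a e_\gamma$-type decomposition is awkward, so instead I use the symmetric fact that for $\beta \notin S_1$, $f_\beta \otimes \ldots$ — actually cleaner: for $\beta \notin S_1$, $e_\alpha \otimes f_\beta \in H$ for all $\alpha$, so $A \otimes f_\beta = \bigoplus_\alpha e_\alpha A \otimes f_\beta \subseteq H$ (ideal property on the left), giving $A \otimes f_\beta B \subseteq H$; hence $a \otimes f_\beta B \subseteq H$ for every $a$ and every $\beta \notin S_1$. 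Combined with $a \otimes f_\beta \in H \Rightarrow a \otimes f_\beta B \subseteq H$ for $\beta \in S_1$, we get $a \otimes B = \bigoplus_\beta a \otimes f_\beta B \subseteq H$, so $a \in I$. Thus $\Ker \phi \subseteq I$, and since the target of $\phi$ has dimension at most $|S_1| \cdot \dim(A \otimes B / H) < \infty$, the ideal $I$ has finite codimension in $A$. The argument for $J$ is entirely symmetric, using $R_1$ in place of $S_1$. The main obstacle I anticipate is precisely this juggling of the missing unit: one must everywhere replace "multiply by $1$" with "the idempotent decomposition plus the ideal property of $H$," and the clean way to do it is the observation that rows/columns indexed outside the finite sets $R_1, S_1$ lie entirely inside $H$.
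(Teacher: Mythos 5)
Your proof is correct and follows essentially the same route as the paper's: both apply Lemma \ref{lemaidealcofinit} to the idempotent system $(e_\alpha\otimes f_\beta)_{\alpha,\beta}$ of $A\otimes B$, observe that every row/column indexed outside the resulting finite set lies entirely in $H$, and then cut out $I$ by the finitely many finite-codimensional linear conditions $a\mapsto a\otimes f_\beta \bmod H$ (the paper phrases this as $I=\bigcap_i\phi_{\beta_i}^{-1}(H)$, you as the kernel of a single map into $(A\otimes B/H)^{\oplus S_1}$ --- the same argument). The ideal-property and inclusion steps are likewise identical, so there is nothing substantive to add.
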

\begin{proof}
Let $a\in I$ and $a'\in A$. If $b\in B$ and $f$ is an idempotent in
$B$ such that $fb=b$, we have that $a'a\otimes b=a'a\otimes
fb=(a'\otimes f)(a\otimes b)\in H$. Thus $a'a\otimes B\subseteq H$,
so $a'a\in I$. Similarly $aa'\in I$, showing that $I$ is an ideal of
$A$. Similarly $J$ is an ideal of $B$.

It is clear that $(e_{\alpha}\otimes f_{\beta})_{\alpha\in
R,\beta\in S}$ is a set of orthogonal idempotents in $A\otimes B$
and $A\otimes B=\oplus_{\alpha \in R,\beta \in S}(A\otimes
B)(e_{\alpha}\otimes f_{\beta})=\oplus_{\alpha \in R,\beta \in
S}(e_{\alpha}\otimes f_{\beta})(A\otimes B)$. By Lemma
\ref{lemaidealcofinit}, there are finitely many idempotents
$e_{\alpha_1}\otimes f_{\beta_1},\ldots,e_{\alpha_n}\otimes
f_{\beta_n}$ which lie outside $H$. If $\alpha \in R\setminus
\{\alpha_1,\ldots,\alpha_n\}$, then for any $\beta \in S$ we have
that $e_{\alpha}\otimes f_{\beta}\in H$, so $e_{\alpha}\otimes
Bf_{\beta}=(e_{\alpha}\otimes Bf_{\beta})(e_{\alpha}\otimes
f_{\beta})\subseteq H$. Then $e_{\alpha}\otimes B\subseteq H$, so
$e_{\alpha}\in I$. Similarly for any $\beta \in S\setminus
\{\beta_1,\ldots,\beta_n\}$ we have that $f_{\beta}\in J$.

For any $\beta\in S$ let $\phi_{\beta}:A\rightarrow A\otimes B$ be
the linear map defined by $\phi_{\beta}(a)=a\otimes f_{\beta}$. If
$a\in A$, then $a\in I$ if and only if for any $\beta \in S$ we have
$a\otimes Bf_{\beta}\subseteq H$; because there is a local unit for
$a$, this is further equivalent to $a\otimes f_{\beta}\in H$ for
$\beta\in S$. This condition is obviously satisfied for $\beta \in
S\setminus \{\beta_1,\ldots,\beta_n\}$ since $f_\beta \in J$, so we
obtain that $I=\cap_{1\leq i\leq n}\phi_{\beta_i}^{-1}(H)$, a finite
intersection of finite codimensional subspaces of $A$, thus a finite
codimensional subspace itself. Similarly $J$ has finite codimension
in $B$. The fact that $I\otimes B+A\otimes J\subseteq H$ is obvious.
\end{proof}

Now we essentially proceed as in \cite[Chapter VI]{sw} or
\cite[Section 1.5]{DNR}, with some arguments adapted to the case of
enough idempotents.

\begin{lemma}\label{lemaAotB}
Let $A$ and $B$ be algebras with enough idempotents. The
following assertions hold.\\
(i) If $f:A\rightarrow B$ is a morphism of algebras, then
$f^*(B^0)\subseteq A^0$, where $f^*$ is the dual map of $f$.\\
(ii) If $\phi:A^*\otimes B^*\rightarrow (A\otimes B)^*$ is the
natural linear injection, then $\phi (A^0\otimes B^0)=(A\otimes B)^0$.\\
(iii) If $M:A\otimes A\rightarrow A$ is the multiplication of $A$,
and $\psi:A^*\otimes A^*\rightarrow (A\otimes A)^*$ is the natural
injection, then $M^*(A^0)\subseteq \psi(A^0\otimes A^0)$.
\end{lemma}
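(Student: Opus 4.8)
The plan is to establish the three parts in turn, obtaining (iii) as a consequence of (i) and (ii). For part (i), given $g\in B^0$ I would choose an ideal $J$ of $B$ with $J\subseteq\Ker g$ and $\dim(B/J)<\infty$, and argue that $f^{-1}(J)$ is the ideal of $A$ witnessing $f^*(g)\in A^0$: it is an ideal because $f$ is a (possibly non-unital) algebra morphism, it lies inside $\Ker(g\circ f)=\Ker(f^*(g))$, and the map $a+f^{-1}(J)\mapsto f(a)+J$ embeds $A/f^{-1}(J)$ into $B/J$, so it has finite codimension.

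For part (ii), the inclusion $\phi(A^0\otimes B^0)\subseteq(A\otimes B)^0$ should be routine: if $f$ vanishes on a finite codimensional ideal $I$ of $A$ and $g$ on such an ideal $J$ of $B$, then $\phi(f\otimes g)$, which sends $a\otimes b$ to $f(a)g(b)$, vanishes on the ideal $I\otimes B+A\otimes J$ of $A\otimes B$, whose quotient $(A/I)\otimes(B/J)$ is finite dimensional; since a general element of $A^0\otimes B^0$ is a finite sum of such tensors and $(A\otimes B)^0$ is a subspace, linearity finishes this direction. For the reverse inclusion I would take $h\in(A\otimes B)^0$, vanishing on a finite codimensional ideal $H$, invoke Lemma~\ref{lemaIJ} to obtain finite codimensional ideals $I$ of $A$ and $J$ of $B$ with $I\otimes B+A\otimes J\subseteq H\subseteq\Ker h$, so that $h$ factors through the finite dimensional space $(A\otimes B)/(I\otimes B+A\otimes J)\cong(A/I)\otimes(B/J)$; then, using the finite dimensional identification $((A/I)\otimes(B/J))^*\cong(A/I)^*\otimes(B/J)^*$, I would write the induced functional as $\sum_k\bar f_k\otimes\bar g_k$ and pull $\bar f_k,\bar g_k$ back along the canonical projections to get $f_k\in A^0$ and $g_k\in B^0$ with $h=\phi(\sum_k f_k\otimes g_k)$, verified on simple tensors.

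For part (iii), note that $\psi$ is the map $\phi$ in the case $B=A$, so by (ii) it is enough to prove $M^*(A^0)\subseteq(A\otimes A)^0$. If $f\in A^0$ vanishes on a finite codimensional ideal $I$ of $A$, then $I\otimes A+A\otimes I$ is a finite codimensional ideal of $A\otimes A$ (its quotient being $(A/I)\otimes(A/I)$) on which $M^*(f)=f\circ M$ vanishes, since $M(I\otimes A)=IA\subseteq I$ and $M(A\otimes I)=AI\subseteq I$ lie in $\Ker f$; hence $M^*(f)\in(A\otimes A)^0=\psi(A^0\otimes A^0)$. I expect the main obstacle to be the inclusion $(A\otimes B)^0\subseteq\phi(A^0\otimes B^0)$ in (ii): there one must recover ideals of the two factors from a single finite codimensional ideal of the tensor product rather than go in the easy direction, and the non-unitality of $A$ and $B$ is exactly what makes this delicate — which is why Lemma~\ref{lemaIJ} was set up beforehand.
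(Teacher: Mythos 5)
Your proposal is correct and follows essentially the same route as the paper, whose proof simply defers to the standard unital argument of \cite[Lemma 1.5.2]{DNR} with the hard inclusion $(A\otimes B)^0\subseteq\phi(A^0\otimes B^0)$ in (ii) handled by Lemma~\ref{lemaIJ} --- exactly the structure you spell out. Your identification of that inclusion as the one genuinely delicate point in the non-unital setting, and of Lemma~\ref{lemaIJ} as the tool that resolves it, matches the paper's intent precisely.
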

\begin{proof}
It goes as the proof of \cite[Lemma 1.5.2]{DNR}, with part of the
argument in (ii) replaced by using the construction and the result
of Lemma \ref{lemaIJ}.
\end{proof}

Lemma \ref{lemaAotB} shows that by restriction and corestriction we
can regard the natural linear injection $\psi$ as an isomorphism
$\psi:A^0\otimes A^0\rightarrow (A\otimes A)^0$. We consider the map
$\Delta:A^0\rightarrow A^0\otimes A^0$, $\Delta=\psi^{-1}M^*$. Thus
$\Delta(f)=\sum_i u_i\otimes v_i$ is equivalent to
$f(xy)=\sum_iu_i(x)v_i(y)$ for any $x,y\in A$. On the other hand, we
define a linear map $\varepsilon:A^0\rightarrow K$ as follows. If
$f\in A^0$, then $\Ker(f)$ contains a finite codimensional ideal
$I$. By Lemma \ref{lemaidealcofinit}, there are finitely many
$e_{\alpha}$'s outside $I$. Therefore only finitely many
$e_{\alpha}$'s lie outside $\Ker(f)$, so it makes sense to define
$\varepsilon(f)=\sum_{\alpha \in R}f(e_{\alpha})$ (only finitely
many terms are non-zero).

\begin{proposition}
Let $A$ be an algebra with enough idempotents. Then $(A^0,\Delta,
\varepsilon)$ is a coalgebra with counit.
\end{proposition}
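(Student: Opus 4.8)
The plan is to verify the coalgebra axioms for $(A^0,\Delta,\varepsilon)$ by dualizing the algebra axioms for $A$, using the identification $\psi:A^0\otimes A^0\to (A\otimes A)^0$ from Lemma \ref{lemaAotB} to make sense of $\Delta$. First I would establish coassociativity. The key point is that the associativity of $M$ means $M(M\otimes 1)=M(1\otimes M)$ as maps $A\otimes A\otimes A\to A$, and dualizing gives $(M\otimes 1)^*M^*=(1\otimes M)^*M^*$ on $A^*$. One then needs to transport this identity through the natural injections to conclude $(\Delta\otimes 1)\Delta=(1\otimes\Delta)\Delta$ on $A^0$. Concretely, it is cleanest to argue elementwise: if $\Delta(f)=\sum_i u_i\otimes v_i$, meaning $f(xy)=\sum_i u_i(x)v_i(y)$ for all $x,y\in A$, then applying $\Delta$ again to the $u_i$ (resp. $v_i$) and using $f((xy)z)=f(x(yz))$ shows both iterated coproducts encode the trilinear form $(x,y,z)\mapsto f(xyz)$, hence agree; this uses that $\psi$ (suitably extended to triple tensor products, via Lemma \ref{lemaAotB}(ii) applied twice) is injective so that equality of the associated functionals forces equality of the tensors.

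Next I would check the counit property, i.e.\ $(\varepsilon\otimes 1)\Delta=1_{A^0}=(1\otimes\varepsilon)\Delta$. Here the role of the local units is essential, since $A$ has no global unit. Given $f\in A^0$ with $\Delta(f)=\sum_i u_i\otimes v_i$, I want to show $\sum_i\varepsilon(u_i)v_i=f$. By definition $\varepsilon(u_i)=\sum_{\alpha\in R}u_i(e_\alpha)$, a finite sum, so $\sum_i\varepsilon(u_i)v_i(y)=\sum_\alpha\sum_i u_i(e_\alpha)v_i(y)=\sum_\alpha f(e_\alpha y)$ for every $y\in A$. Now I must argue this equals $f(y)$: since $y=\sum_\alpha e_\alpha y$ with only finitely many nonzero terms (as $A=\oplus_\alpha e_\alpha A$), linearity of $f$ gives $\sum_\alpha f(e_\alpha y)=f(\sum_\alpha e_\alpha y)=f(y)$. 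The symmetric computation, using $A=\oplus_\alpha Ae_\alpha$, handles $(1\otimes\varepsilon)\Delta$. I should also double-check that $\varepsilon$ is well defined independently of the choice of cofinite ideal $I\subseteq\Ker f$, which follows because the formula $\varepsilon(f)=\sum_\alpha f(e_\alpha)$ makes no reference to $I$ once Lemma \ref{lemaidealcofinit} guarantees the sum is finite; one only needs finiteness, which holds for \emph{any} cofinite ideal in $\Ker f$.

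I would also need to confirm that $\Delta$ and $\varepsilon$ actually land in the right spaces: that $\Delta(f)\in A^0\otimes A^0$ is precisely the content of Lemma \ref{lemaAotB}(iii) together with the reinterpretation of $\psi$ as an isomorphism onto $(A\otimes A)^0$, so this is already in hand; and $\varepsilon(f)\in K$ is automatic. The main obstacle, such as it is, is the bookkeeping in the counit verification: making sure that all the infinite-looking sums ($\sum_{\alpha\in R}$) are genuinely finite at each step and that one may freely interchange them with the finite sum $\sum_i$ coming from $\Delta(f)$. Both finiteness claims reduce to Lemma \ref{lemaidealcofinit} applied to a cofinite ideal contained in $\Ker f$ (for $\varepsilon(f)$ itself) and to cofinite ideals contained in $\Ker u_i$ and $\Ker v_i$ — but in fact the cleanest route avoids even this by using the decomposition $y=\sum_\alpha e_\alpha y$, which is finite by hypothesis on $A$, so that only finitely many $\alpha$ contribute to $\sum_\alpha f(e_\alpha y)$ for each fixed $y$. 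Everything else is a routine dualization entirely parallel to \cite[Section 1.5]{DNR}, the only new ingredient being the systematic use of local units in place of a unit.
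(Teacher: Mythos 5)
Your proposal is correct and follows essentially the same route as the paper: coassociativity is the standard dualization argument (which the paper simply cites from \cite[Proposition 1.5.3]{DNR}), and your counit verification --- expanding $\sum_i\varepsilon(u_i)v_i(y)$ as $\sum_\alpha f(e_\alpha y)$ and collapsing it via the finite decomposition $y=\sum_\alpha e_\alpha y$ coming from $A=\oplus_\alpha e_\alpha A$ --- is exactly the computation in the paper's proof. The finiteness bookkeeping you flag is handled there in the same way, by choosing a finite $F\subseteq R$ with $(\sum_{\alpha\in F}e_\alpha)a=a$.
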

\begin{proof}
The proof of the coassociativity works exactly as in the case where
$A$ has a unit, see \cite[Proposition 1.5.3]{DNR}. To check the
property of the counit, let $f\in A^0$ and
$\Delta(f)=\sum_iu_i\otimes v_i$. Let $a\in A$ and $F$ a finite
subset of $R$ such that $a\in \sum_{\alpha\in F}e_{\alpha}A$. Then
clearly $(\sum_{\alpha\in F}e_{\alpha})a=a$. We have that
\begin{eqnarray*}
(\sum_i\varepsilon(u_i)v_i)(a)&=&\sum_{i,\alpha}u_i(e_{\alpha})v_i(a)\\
&=&\sum_{\alpha}f(e_{\alpha}a)\\
&=&\sum_{\alpha \in F}f(e_{\alpha}a)\\
&=&f((\sum_{\alpha \in F}e_{\alpha})a)\\
&=&f(a) \end{eqnarray*} so $\sum_i\varepsilon(u_i)v_i=f$. Similarly
$\sum_i\varepsilon(v_i)u_i=f$, and this ends the proof.
\end{proof}

Let us note that if $f:A\rightarrow B$ is a morphism of algebras
with enough idempotents, then the map $f^0:B^0\rightarrow A^0$
induced by $f^*$ is compatible with the comultiplications of $A^0$
and $B^0$, but not necessarily with the counits (unless $f$ is
compatible in some way to the systems of orthogonal idempotents in
$A$ and $B$). \\
We denote by $\rightharpoonup$ (respectively $\leftharpoonup$) the
usual left (respectively right) actions of $A$ on $A^*$. As in the
unitary case, we have the following characterization of the elements
of $A^0$.

\begin{proposition}
Let $f\in A^*$. With notation as above, the following assertions are
equivalent.\\
(1) $f\in A^0$.\\
(2) $M^*(f)\in \psi(A^0\otimes A^0)$.\\
(3) $M^*(f)\in \psi(A^*\otimes A^*)$.\\
(4) $A\rightharpoonup f$ is finite dimensional.\\
(5) $f\leftharpoonup A$ is finite dimensional.\\
(6) $A\rightharpoonup f\leftharpoonup A$ is finite dimensional.\\
(7) $\Ker (f)$ contains a left ideal of finite codimension.\\
(8) $\Ker (f)$ contains a right ideal of finite codimension.
\end{proposition}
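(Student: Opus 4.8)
The plan is to establish the cycle of implications by a standard pattern adapted to algebras with enough idempotents, using the local units crucially to replace the role of the identity element in the classical proof (cf. \cite[Section 1.5]{DNR} or \cite[Chapter VI]{sw}). The equivalences (1) $\Leftrightarrow$ (2) $\Leftrightarrow$ (3) come essentially for free: (1) $\Rightarrow$ (2) is Lemma \ref{lemaAotB}(iii), (2) $\Rightarrow$ (3) is trivial since $\psi(A^0\otimes A^0)\subseteq\psi(A^*\otimes A^*)$, and for (3) $\Rightarrow$ (1) one unwinds what $M^*(f)\in\psi(A^*\otimes A^*)$ means: it says $f(xy)=\sum_{i=1}^n u_i(x)v_i(y)$ for finitely many $u_i,v_i\in A^*$, and then one checks directly that $\bigcap_i\Ker(u_i\leftharpoonup \cdot)$-type arguments produce a cofinite ideal inside $\Ker(f)$ — more precisely, that $I=\{a\in A : f(AaA)=0, f(Aa)=0, f(aA)=0\}$ (or the appropriate two-sided analogue) has finite codimension because the map $a\mapsto (x\mapsto f(xa))$ lands in the finite-dimensional span of the $v_i$'s. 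Here one must use a local unit for a given element to pass between $f(xa)$ and the values $f(xe a)$ so the finite spanning set actually controls $a$.

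**Next** I would handle the actions. For (1) $\Rightarrow$ (4): if $I\subseteq\Ker(f)$ is a cofinite ideal, then for $a\in A$ one has $(a\rightharpoonup f)(I)=f(Ia)\subseteq f(I)=0$, so $A\rightharpoonup f$ consists of functionals vanishing on the cofinite subspace $I$, hence lies in the finite-dimensional space $(A/I)^*$. For (4) $\Rightarrow$ (7): consider the annihilator $L=\{a\in A : a\rightharpoonup f=0\}$; this is a left ideal (since $\rightharpoonup$ is a left action), and the map $a\mapsto a\rightharpoonup f$ has image $A\rightharpoonup f$, finite dimensional by (4), so $A/L$ is finite dimensional, i.e. $L$ is a cofinite left ideal; and $L\subseteq\Ker(f)$ because if $a\rightharpoonup f=0$ then $f(ba)=0$ for all $b$, and picking $b$ a local unit for $a$ gives $f(a)=0$. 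Then (7) $\Rightarrow$ (3): from a cofinite left ideal $L\subseteq\Ker(f)$, pick a local-unit-compatible finite-dimensional complement and check $M^*(f)\in\psi(A^*\otimes A^*)$, again using that for each $a$ there is an $e_\alpha$-sum acting as identity. The chain (1) $\Rightarrow$ (5) $\Rightarrow$ (8) $\Rightarrow$ (3) is the mirror image using $\leftharpoonup$ and right ideals, and (4) $\Rightarrow$ (6), (5) $\Rightarrow$ (6) are immediate while (6) $\Rightarrow$ (4) and (6) $\Rightarrow$ (5) hold since $A\rightharpoonup f$ and $f\leftharpoonup A$ are each contained in $A\rightharpoonup f\leftharpoonup A$ (using that $A$ has local units, so e.g. $a\rightharpoonup f = a\rightharpoonup(f\leftharpoonup e)$ for a suitable idempotent $e$, placing it inside the two-sided orbit).

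**The main obstacle** — or rather the recurring technical point demanding care — is that without a global unit one cannot simply say $f = 1\rightharpoonup f$ or deduce $f(a)=0$ from $f(Aa)=0$; every such step must be routed through a local unit $e$ (a finite sum of the $e_\alpha$) with $ea=ae=a$ for the finitely many elements in play. The spot where this is most delicate is reconstructing a genuine two-sided cofinite \emph{ideal} inside $\Ker(f)$ in the implications landing on (1): a cofinite one-sided ideal is easy, but to get a two-sided one in $\Ker(f)$ one typically forms $I = \{a : f(AaA)=0\}$ and must argue finite codimension via the finite-dimensionality of $A\rightharpoonup f\leftharpoonup A$ together with the local-unit property to ensure $I\subseteq\Ker(f)$. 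Once the implications (1)$\Rightarrow$(4)$\Rightarrow$(7)$\Rightarrow$(3)$\Rightarrow$(1), the symmetric (1)$\Rightarrow$(5)$\Rightarrow$(8)$\Rightarrow$(3), and (1)$\Leftrightarrow$(2)$\Leftrightarrow$(3) together with (4)$\Leftrightarrow$(6)$\Leftrightarrow$(5) are in place, all eight statements are equivalent.
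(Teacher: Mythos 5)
Your overall architecture (closing the cycle $(1)\Rightarrow(4)\Rightarrow(7)\Rightarrow(3)\Rightarrow(1)$ plus its mirror, with $(1)\Leftrightarrow(2)\Leftrightarrow(3)$ from Lemma \ref{lemaAotB}) is sound and close in spirit to the paper's, which instead proves $(4)\Rightarrow(1)$ directly via the algebra map $\pi:A\rightarrow \End(A\rightharpoonup f)$ and $(7)\Rightarrow(1)$ via the two-sided ideal $J=\{r\mid rA\subseteq I\}$. Your treatment of the local-unit pitfalls in $(4)\Rightarrow(7)$ and $(3)\Rightarrow(1)$ is correct. But there is a genuine error in the one place where you connect $(6)$ to the other conditions. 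The identity $a\rightharpoonup f=a\rightharpoonup(f\leftharpoonup e)$ for $e$ a local unit for $a$ is false: evaluating at $x$ gives $f(xa)$ on the left and $f(exa)$ on the right, and $ea=ae=a$ gives you no control over $ex$. Consequently the containment $A\rightharpoonup f\subseteq A\rightharpoonup f\leftharpoonup A$ is not established, and indeed it fails in general for non-unital $A$: e.g.\ for the quiver with one sink $u$, infinitely many sources $w_i$ and arrows $x_i:w_i\to u$, and $f$ equal to $1$ on every $x_i$, one has $u\rightharpoonup f=f$, while every finite combination of elements $a\rightharpoonup f\leftharpoonup b$ vanishes on all but finitely many $x_i$. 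So your proof of $(6)\Rightarrow(4)$, $(6)\Rightarrow(5)$ collapses, and with it the equivalence of $(6)$ with the rest. (Also, $(4)\Rightarrow(6)$ is not ``immediate'': $(A\rightharpoonup f)\leftharpoonup A$ need not be controlled by $\dim(A\rightharpoonup f)$ alone; route it as $(4)\Rightarrow(1)\Rightarrow(6)$, where $(1)\Rightarrow(6)$ follows since every $a\rightharpoonup f\leftharpoonup b$ kills a cofinite two-sided ideal contained in $\Ker(f)$.)

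The fix for $(6)\Rightarrow$ (everything else) is the paper's argument: set $R=(A\rightharpoonup f\leftharpoonup A)^{\perp}$, a two-sided ideal of finite codimension, and note $R\subseteq\Ker(f)$ because for $r\in R$ one may choose a local unit $e$ with $r=ere$, whence $f(r)=f(ere)=(e\rightharpoonup f\leftharpoonup e)(r)=0$ --- here the local unit is applied to the element $r$ being tested, not used to move $f$ inside the two-sided orbit. (Alternatively, one can salvage your containment \emph{under hypothesis} $(6)$: $a\rightharpoonup f$ is the pointwise limit over finite $F\subseteq R$ of $\sum_{\alpha\in F}a\rightharpoonup f\leftharpoonup e_{\alpha}\in A\rightharpoonup f\leftharpoonup A$, and a finite-dimensional subspace of $A^*$ is closed under pointwise limits; but that is a different argument from the one you wrote.)
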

\begin{proof}
$(1)\Rightarrow (2)\Rightarrow (3)\Rightarrow (4)$ and
$(1)\Rightarrow (6)$ work exactly as in the case where $A$ has
identity, see \cite[Proposition 1.5.6]{DNR}. We adapt the proof of
$(4)\Rightarrow (1)$ to the case of enough idempotents. Since
$A\rightharpoonup f$ is a left $A$-submodule of $A^*$, there is a
morphism of algebras (without unit) $\pi:A\rightarrow
End(A\rightharpoonup f)$ defined by $\pi (a)(m)=a\rightharpoonup m$
for any $a\in A$, $m\in A\rightharpoonup f$. Since
$End(A\rightharpoonup f)$ has finite dimension, we have that $I=\Ker
(\pi)$ is an ideal of $A$ of finite codimension. Let $a\in I$. Then
$a\rightharpoonup(b\rightharpoonup f)=(ab)\rightharpoonup f=0$ for
any $b\in A$, so $f(xab)=0$ for any $x,b\in A$. Let $e\in A$ such
that $ea=ae=a$. Then $f(a)=f(eae)=0$, so $a\in \Ker(f)$. Thus
$I\subseteq \Ker(f)$, showing that $f\in A^0$. The equivalence
$(1)\Leftrightarrow (5)$ is proved
similarly.\\
$(6)\Rightarrow (1)$ can be adapted from the unital case, see
\cite[Lemma 9.1.1]{mo}, with a small change. Indeed,
$R=(A\rightharpoonup f\leftharpoonup A)^\perp=\{ x\in
A|g(x)=0\;\mbox{for any }g\in A\rightharpoonup f\leftharpoonup A\}$
is an ideal of $A$ of finite codimension, and $R\subseteq \Ker (f)$,
since for any $r\in R$ there exists $e\in A$ such that $r=er=re$, so
then $f(r)=f(ere)=(e\rightharpoonup f\leftharpoonup e)(r)=0$.\\
$(1)\Rightarrow (7)$ is obvious, while $(7)\Rightarrow (1)$ follows
from the fact that a left ideal $I$ of finite codimension contains
the finite codimensional ideal $J=\{ r\in A|rA\subseteq I\}$.
$(1)\Leftrightarrow (8)$ is similar.
\end{proof}

We end this section with an interpretation of the connection between
an algebra $A$ with enough idempotents and its finite dual $A^0$
from the representation theory point of view. This extends the
results presented in \cite[Chapter 3, Section 1.2]{abe} in the case
where $A$ has identity. Let $M$ be a left $A$-module. Then $M$ is
called unital if $AM=M$. Also, $M$ is called locally finite if the
submodule generated by any element is finite dimensional. Denote by
$LocFin_A{\mathcal M}$ the full subcategory of the category of left
$A$-modules consisting of all locally finite unital modules. We will
also use the notations ${}_A\Mm$, $\Mm_A$ for the categories of
left, or right modules over $A$; similarly, for a coalgebra $C$,
${}^C\Mm$ and $\Mm^C$ will be used to denote the categories of left
and respectively right comodules.

\begin{proposition} \label{propequivrepcorep}
Let $A$ be an algebra with enough idempotents. Then the category
${\mathcal M}^{A^0}$ of right $A^0$-comodules is isomorphic to the
category $LocFin_A{\mathcal M}$.
\end{proposition}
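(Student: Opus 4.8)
The plan is to mimic the classical equivalence between locally finite modules over a unital algebra and comodules over its finite dual, adapting every step to the setting of enough idempotents. First I would set up the two functors. Given a right $A^0$-comodule $(M,\rho)$ with $\rho:M\to M\otimes A^0$, $\rho(m)=\sum m_0\otimes m_1$, I define a left $A$-module structure on $M$ by $a\cdot m=\sum m_1(a)m_0$; the comodule axioms translate into $a\cdot(b\cdot m)=(ab)\cdot m$, exactly as in the unital case, using that $\Delta$ on $A^0$ is dual to multiplication. The counit axiom $\sum\varepsilon(m_1)m_0=m$, together with the explicit formula $\varepsilon(f)=\sum_{\alpha}f(e_\alpha)$, gives $\sum_{\alpha\in R}e_\alpha\cdot m=m$ (a finite sum), which shows simultaneously that $M$ is unital and that each $m$ is acted on by a single idempotent $e=\sum_{\alpha\in F}e_\alpha$ as a local unit. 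Local finiteness follows because the image of $\rho(m)$ lies in $M\otimes D$ for some finite-dimensional subcoalgebra (or finite-dimensional subspace) $D\subseteq A^0$, so $A\cdot m$ is spanned by the finitely many $m_0$'s appearing in $\rho(m)$.

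Conversely, given $M$ in $LocFin_A{\mathcal M}$, I must produce a coaction. For $m\in M$, the submodule $Am$ is finite dimensional; pick a basis $m_1,\dots,m_n$ and dual "coordinate" functionals. The coaction is built by the standard recipe: writing $a\cdot m=\sum_i c_i(a)m_i$ one checks that each $c_i$ lies in $A^0$ — its kernel contains the cofinite ideal $\mathrm{Ann}(Am)$, which is cofinite because $Am$ is finite dimensional and $M$ is unital (so the action factors through $A/\mathrm{Ann}(Am)\hookrightarrow \End(Am)$, finite dimensional). One then defines $\rho(m)=\sum_i m_i\otimes c_i\in M\otimes A^0$ and verifies coassociativity and counitality; the counit check is precisely where one uses the formula for $\varepsilon$ and the existence of a local unit $e$ with $em=m$, giving $\sum_i\varepsilon(c_i)m_i=\sum_\alpha\sum_i c_i(e_\alpha)m_i=\sum_\alpha e_\alpha\cdot m=e\cdot m=m$.

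Then I would check that a morphism of $A^0$-comodules is the same as an $A$-linear map between the associated modules, and that the two constructions are mutually inverse on objects — so the functors give an isomorphism of categories, not merely an equivalence. This last point is where the enough-idempotents hypothesis does real work: in the non-unital generality one could only hope for an equivalence, but the canonical choice of coaction (independent of any basis, via the universal property that $\rho(m)$ is the unique element of $M\otimes A^0$ representing the finite-dimensional subspace $A\rightharpoonup(\text{evaluation at }m)$) makes the round trips literally the identity.

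The main obstacle I anticipate is verifying the counit axiom in both directions cleanly, since here $A$ has no global unit and the counit of $A^0$ is the non-obvious sum $\varepsilon(f)=\sum_{\alpha\in R}f(e_\alpha)$ rather than evaluation at $1$. The key is the observation, already used in the proof that $A^0$ is a coalgebra, that for any $f\in A^0$ only finitely many $f(e_\alpha)$ are nonzero (Lemma \ref{lemaidealcofinit}), and that for any finite set of module elements there is an idempotent $e=\sum_{\alpha\in F}e_\alpha$ acting as a two-sided identity on them. Once this bookkeeping is in place, every other step is formally identical to the unital case treated in \cite[Chapter 3, Section 1.2]{abe}, so I would simply indicate the changes rather than repeat those computations.
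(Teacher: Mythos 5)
Your proposal is correct and takes essentially the same route as the paper's proof: the same pair of constructions ($a\cdot m=\sum m_1(a)m_0$ in one direction, and $\rho(m)=\sum_i m_i\otimes a_i^*$ built from a basis of the finite-dimensional $Am$ with $\Ker a_i^*\supseteq ann_A(Am)$ in the other), with the counit formula $\varepsilon(f)=\sum_{\alpha}f(e_{\alpha})$ and local units handling unitality exactly as the paper does. The only cosmetic difference is that you spell out the basis-independence/universal-property remark that the paper leaves as ``easy to see.''
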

\begin{proof}
Let $M$ be a right $A^0$-comodule with comodule structure $m\mapsto
\sum m_0\otimes m_1$. Then $M$ is a left $A$-module with the action
$am=\sum m_1(a)m_0$ for any $a\in A$ and $m\in M$. The counit
property $m=\sum \varepsilon (m_1)m_0$, with all $m_1$'s in $A^0$,
shows that $m=\sum _{\alpha \in F}e_{\alpha}m$ for a finite set $F$,
so $M$ is unital. Since $Am$ is contained in the subspace spanned by
all $m_0$'s, we have that $M$ is also locally finite.

Conversely, let $M\in LocFin_A{\mathcal M}$. Let $m\in M$, and let
$(m_i)_{i=1,n}$ be a (finite) basis of $Am$. Define
$a_1^*,\ldots,a_n^*\in A^*$ such that $am=\sum_{i=1,n}a_i^*(a)m_i$
for any $a\in A$. Since $\cap_{i=1,n}ann_A(m_i)=ann_A(Am)\subseteq
ann_A(m)=\cap_{i=1,n}\Ker a_i^*$ and each $ann_A(m_i)$ has finite
codimension, we get that $a_i^*\in A^0$ for any $i$. Now we define
$\rho:M\rightarrow M\otimes A^0$ by $\rho (m)=\sum_{i=1,n}m_i\otimes
a_i^*$. It is easy to see that the definition of $\rho(m)$ does not
depend on the choice of the basis $(m_i)_i$, and that $(\rho \otimes
I)\rho=(I\otimes \Delta )\rho$. To show that $M$ is a right
$A^0$-comodule it remains to check the counit property, and this
follows from the fact that $M$ is unital.

It is clear that the above correspondences define an isomorphism of
categories.
\end{proof}

\section{Quiver algebras and path coalgebras}\label{sectquiverpath}

 We examine the connection between the quiver algebra $K[\Gamma]$ and the path
coalgebra $K\Gamma$ associated to a quiver $\Gamma$. The algebra
$K[\Gamma]$ has identity if and only if $\Gamma$ has finitely many
vertices. However, $K[\Gamma]$ always has enough idempotents (the
set of all vertices). Thus by Section \ref{sectdualfinit} we can
consider the finite dual $K[\Gamma]^0$, which is a coalgebra with
counit. One has that $K[\Gamma]^0\supseteq K\Gamma$, i.e., the path
coalgebra can be embedded in the finite dual of the quiver algebra.
The embedding is given as follows: for each path $p\in\Gamma$,
denote by $\theta(p)\in K[\Gamma]^*$ the function
$\theta(p)(q)=\delta_{p,q}$. We have that $\theta(p)\in K[\Gamma]^0$
since if we denote by $S(p)$ the set of all subpaths of $p$, and by
$P$ the set of all paths in $\Gamma$, the span of $P\setminus S(p)$
is a finite codimensional ideal of $K[\Gamma]$ contained in $\Ker
\theta(p)$. It is easy to see that $\theta:K\Gamma\hookrightarrow
K[\Gamma]^0$ is an embedding of coalgebras.  In general,
$K[\Gamma]\hookrightarrow(K\Gamma)^*$ is surjective if and only if
the quiver $\Gamma$ is finite. Also, in general, $\theta$ is not
surjective. To see this, let $A$ be the quiver algebra of a loop
$\Gamma$, i.e., a quiver with one vertex and one arrow:
\vspace{.2cm}
$$\xymatrix{\bullet\ar@(ul,ur)[]}$$ so $A=K[X]$, the polynomial
algebra in one indeterminate. The finite dual of this algebra is
$\lim\limits_{\stackrel{\longrightarrow}{f{{\rm\,irreducible}};\,n\in\ZZ_{\geq
0}}}(K[X]/(f^n))^*=\bigoplus\limits_{f{\rm\,irreducible}}[\lim\limits_{\stackrel{\longrightarrow}{n\in
\ZZ_{\geq 0}}}(K[X]/(f^n))^*]$, while the path coalgebra is
precisely the divided power coalgebra, which can be written as
$\lim\limits_{\stackrel{\longrightarrow}{n\in \ZZ_{\geq
0}}}\left(K[X]/(X^n)\right)^*$. These two coalgebras are not
isomorphic, so the map $\theta$ above is not a surjection. Indeed,
$K\Gamma$ has just one grouplike element, the vertex of $\Gamma$,
while the grouplike elements of $A^0$, which are the algebra
morphisms from $A=K[X]$ to $K$, are in bijection to $K$.

The embedding of coalgebras $\theta:K\Gamma\hookrightarrow
K[\Gamma]^0$ also gives rise to a functor
$F^\theta:\,^{K\Gamma}\Mm\rightarrow \, ^{K[\Gamma]^0}\Mm$,
associating to a left $K\Gamma$-comodule the left
$K[\Gamma]^0$-comodule structure obtained by extension of coscalars
via $\theta$. We aim to provide a criterion for when the above map
$\theta$ is bijective, that is, when the path coalgebra is recovered
as the finite dual of the quiver algebra. Even though this is not
always the case, we show that it is possible to interpret the quiver
algebra as a certain kind of ``graded" finite dual. We will think of
$K\Gamma$ as embedded into $K[\Gamma]^0$ through $\theta$, and
sometimes write $K\Gamma$ instead of $\theta(K\Gamma)$.

Recall that in a quiver algebra $K[\Gamma]$, there is an important
class of ideals, those which have a basis of paths; equivalently,
the ideals generated by paths. Let us call such an ideal a monomial
ideal. When $I$ is a cofinite monomial right ideal, the quotient
$K[\Gamma]/I$ produces an interesting type of representation often
considered in the representation theory of quivers; we refer to
\cite{V} for the theory monomial algebras and representations. In
fact, such a representation also becomes a left $K\Gamma$-comodule,
i.e., it is in the ``image" of the functor $F^\theta$. To see this,
let $B$ be basis of paths for $I$ and let $E$ be the set of paths
not belonging to $I$; then $E$ is finite, and because $I$ is a right
ideal, one sees that if $p\in E$ and $p=qr$, then $q\in E$. This
shows that $KE$, the span of $E$, is a right $K\Gamma$-subcomodule
of $K\Gamma$, so it is a rational left $(K\Gamma)^*$-module (for
example, by \cite[Theorem 2.2.5]{DNR}). By \cite[Lemma 2.2.12]{DNR},
the right $(K\Gamma)^*$-module $(KE)^*$ is  rational, and so it has
a compatible left $K\Gamma$-comodule structure. Hence $(KE)^*$ is a
right $K[\Gamma]$-module via the algebra map
$K[\Gamma]\hookrightarrow (K\Gamma)^*$. Now, it is straightforward
to see that $K[\Gamma]/I\cong (KE)^*$ as right $K[\Gamma]$-modules,
and this proves the claim. Thus, $F^\theta(\,
^{K\Gamma}((KE)^*{}))=\, ^{K[\Gamma]^0}(K[\Gamma]/I)$, since every
finite dimensional right $K[\Gamma]$-module is a left
$K[\Gamma]^0$-comodule.

We can now state a characterization of the path coalgebra in terms
of the quiver algebra, as a certain type of finite dual.

\begin{proposition}\label{p.qalco}
The coalgebra $\theta(K\Gamma)$ consists of all $f\in K[\Gamma]^*$
such that $\ker(f)$ contains a two-sided cofinite monomial ideal.
\end{proposition}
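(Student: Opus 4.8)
The plan is to prove the two inclusions separately, the whole argument resting on one simple observation about monomial ideals: if $I$ is a monomial ideal of $K[\Gamma]$ with basis a set $B$ of paths, and $E := P\setminus B$ (where $P$ is the set of all paths), then $K[\Gamma]=KE\oplus I$ as vector spaces, since $P=B\sqcup E$ is a basis of $K[\Gamma]$; moreover $I$ is cofinite if and only if $E$ is finite. I will also use repeatedly that $\theta(K\Gamma)$ is precisely the linear span of the functionals $\theta(p)$, $p\in P$, and that these are linearly independent in $K[\Gamma]^*$ because $\theta(p)(q)=\delta_{p,q}$.

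For the inclusion $\subseteq$, take $f=\sum_{p\in F}c_p\theta(p)$ with $F\subseteq P$ finite. Set $T=\bigcup_{p\in F}S(p)$, the set of all subpaths of paths in $F$; this is finite. Since a subpath of a subpath of $p$ is again a subpath of $p$, the set $T$ is closed under passing to subpaths, hence its complement $P\setminus T$ spans a \emph{two-sided} monomial ideal $J$ of $K[\Gamma]$ (closure of the complement of $T$ under subpaths is exactly the condition for $\mathrm{span}(P\setminus T)$ to be a two-sided ideal), and $J$ is cofinite because $T$ is finite. Finally, $J=\bigcap_{p\in F}\mathrm{span}(P\setminus S(p))\subseteq\bigcap_{p\in F}\ker\theta(p)\subseteq\ker f$, using the fact (already recorded in the text) that $\mathrm{span}(P\setminus S(p))\subseteq\ker\theta(p)$.

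For the inclusion $\supseteq$, suppose $f\in K[\Gamma]^*$ with $\ker f\supseteq I$ for some two-sided cofinite monomial ideal $I$. Choose a basis of paths $B$ for $I$, put $E=P\setminus B$, which is finite by cofiniteness, and use $K[\Gamma]=KE\oplus I$. Define $g=\sum_{p\in E}f(p)\,\theta(p)\in\theta(K\Gamma)$, and compare $f$ and $g$ on the basis $P$: if $q\in E$ then $g(q)=\sum_{p\in E}f(p)\delta_{p,q}=f(q)$; if $q\in B$ then $g(q)=0$ because $q\notin E$, while $f(q)=0$ because $q\in I\subseteq\ker f$. Hence $f$ and $g$ agree on every path, so $f=g\in\theta(K\Gamma)$.

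I do not anticipate a real obstacle: both directions are essentially linear algebra once the splitting $K[\Gamma]=KE\oplus I$ is in hand. The only points needing a little care are the bookkeeping in the $\subseteq$ direction — verifying that $\bigcup_{p\in F}S(p)$ is closed under subpaths so that its complement genuinely spans a two-sided ideal, and that the intersection of the ideals $\mathrm{span}(P\setminus S(p))$ is $\mathrm{span}(P\setminus\bigcup_{p\in F}S(p))$ — neither of which goes beyond unwinding the definition of a monomial ideal as one whose basis of paths has a subpath-closed complement.
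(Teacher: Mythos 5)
Your proof is correct and follows essentially the same route as the paper: the reverse inclusion is the identical computation $f=\sum_{p\in E}f(p)\theta(p)$ on the splitting $P=B\sqcup E$, and the forward inclusion uses the same ideals $\mathrm{span}(P\setminus S(p))$, with your intersection argument merely making explicit the step the paper dismisses with ``clearly''.
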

\begin{proof}
Let $P$ be the set of paths in $\Gamma$. If $p$ is a path, and
$S(p)$ is the set of subpaths of $p$, then the cofinite dimensional
vector space with basis $P\setminus S(p)$ is an ideal, and it is
obviously contained in $\ker(\theta(p))$. Then clearly
$\ker(\theta(z))$ contains a cofinite monomial ideal for any $z\in
K\Gamma$.\\
Let now $f\in K[\Gamma]^*$ such that $\ker(f)$ contains the cofinite
monomial ideal $I$. Let $B$ be a basis of $I$ consisting of paths,
and let $E=P\setminus B$, which is finite, since $I$ is cofinite.
 Then if $q\in B$, we have that
$f(q)=0=\sum_{p\in E}f(p)\theta(p)(q)$, while if $q\in E$, we have
that $(\sum_{p\in E}f(p)\theta(p))(q)=f(q)$. Therefore $f=\sum_{p\in
E}f(p)\theta(p)\in \theta(K\Gamma)$.
\end{proof}

The following easy combinatorial condition will be the core of our
characterization.

\begin{proposition}\label{p.comb}
Let $\Gamma$ be a quiver. The following conditions are equivalent:\\
(i) $\Gamma$ has no oriented cycles and between any two vertices there are only finitely many arrows.\\
(ii) For any finite set of vertices $E\subset \Gamma$, there are
only finitely many paths passing only through vertices of $E$.
\end{proposition}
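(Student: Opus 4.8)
The plan is to prove the two implications separately, with the contrapositive being the natural form for $(i)\Rightarrow(ii)$.

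\textbf{Proof of $(ii)\Rightarrow(i)$.} I would argue by contrapositive. Suppose $(i)$ fails. If there are infinitely many arrows between two vertices $u$ and $v$, then take $E=\{u,v\}$: each arrow from $u$ to $v$ is a path passing only through vertices of $E$, so there are infinitely many such paths, contradicting $(ii)$. If instead $\Gamma$ has an oriented cycle, say a path $c$ with $s(c)=t(c)=v$, let $E$ be the (finite) set of vertices occurring along $c$. Then $c, c^2, c^3,\ldots$ (the concatenations $cc$, $ccc$, etc., which are legitimate paths since $t(c)=s(c)$) are pairwise distinct paths of strictly increasing length, all passing only through vertices of $E$. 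Again $(ii)$ fails. Hence $(ii)\Rightarrow(i)$.

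\textbf{Proof of $(i)\Rightarrow(ii)$.} Assume $(i)$ and fix a finite set of vertices $E$. I would first bound the \emph{length} of any path through $E$ only. Since $\Gamma$ has no oriented cycles, no path can repeat a vertex: if a path $p=a_1\cdots a_n$ had $s(a_i)=t(a_j)$ for some $i\le j$, the sub-path $a_i\cdots a_j$ would be an oriented cycle. Therefore any path using only vertices of $E$ visits each vertex of $E$ at most once, so its length is at most $|E|-1$; in particular such paths use at most $|E|$ arrows. Next, the number of arrows available is finite: for each ordered pair $(u,v)\in E\times E$ there are only finitely many arrows from $u$ to $v$ by $(i)$, and there are only finitely many such pairs, so the total set $A_E$ of arrows with both endpoints in $E$ is finite. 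A path through $E$ only is a sequence of arrows from $A_E$ of length at most $|E|-1$, and there are at most $\sum_{k=0}^{|E|-1}|A_E|^k$ such sequences (counting vertices as length-zero paths as well), which is finite. This proves $(ii)$.

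The only mild subtlety — and the step I would be most careful about — is the claim that an oriented-cycle-free quiver admits no vertex-repeating path; this is exactly the place where the hypothesis ``no oriented cycles'' is used, and it is what simultaneously rules out loops and longer cycles. Everything else is elementary counting, so I do not expect a genuine obstacle here. Both directions together give the stated equivalence.
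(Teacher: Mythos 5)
Your proof is correct and complete. The paper itself gives no proof of Proposition \ref{p.comb} --- it is stated as an ``easy combinatorial condition'' and left to the reader --- so there is nothing to compare against; your argument (contrapositive for $(ii)\Rightarrow(i)$ via iterating a cycle, and for $(i)\Rightarrow(ii)$ the observation that acyclicity forces paths to be vertex-injective, hence of length at most $|E|-1$ over a finite arrow set) is exactly the intended elementary counting and correctly identifies the one point where the no-cycles hypothesis is used.
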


We recall that a representation of the quiver $\Gamma$ is a pair
$\mathcal{R}=((V_u)_{u\in \Gamma_0}, (f_a)_{a\in \Gamma_1})$
consisting of a family of vector spaces and a family of linear maps,
such that $f_a:V_u\rightarrow V_v$, where $u=s(a)$ and $v=t(a)$ for
any $a\in \Gamma_1$. A morphism of representations is a family of
linear maps (indexed by $\Gamma_0$) between the corresponding
$V_u$'s, which are compatible with the corresponding linear
morphisms in the two representations. The category $Rep\, \Gamma$ of
representations of $\Gamma$ is equivalent to the category
$u.\mathcal{M}_{K[\Gamma]}$ of unital right $K[\Gamma]$-modules. The
equivalence $H: u.\mathcal{M}_{K[\Gamma]}\rightarrow Rep\, \Gamma$
works as follows. To a unital right $K[\Gamma]$-module $V$ we
associate the representation $H(V)=((V_u)_{u\in \Gamma_0},
(f_a)_{a\in \Gamma_1})$, where $V_u=Vu$ for any $u\in \Gamma_0$, and
for an arrow $a$ from $u$ to $v$ we define $f_a:V_u\rightarrow V_v$,
$f_a(x)=xa$. An inverse equivalence functor associates to
representation $\mathcal{R}$ as above the space $\oplus_{u\in
\Gamma_0}V_u$ endowed with a right $K[\Gamma]$-action defined by
$xp=f_p(x)$ for $p=a_1\ldots a_n$ and $x\in V_u$ such that
$s(a_1)=u$. Here we denote $f_p=f_{a_n}\ldots f_{a_1}$. If
$s(a_1)\neq u$, the action is $xp=0$.

A representation $\mathcal{R}$ is locally finite if for any $u\in
\Gamma_0$ and any $x\in V_u$ the subspace $\langle f_p(x)| p \mbox{
path with }s(p)=u\rangle$ of $\oplus_{u\in \Gamma_0}V_u$ is finite
dimensional. Denote the subcategory of locally finite
representations by $LocFin Rep\, \Gamma$. The equivalence $H$
restricts to an equivalence $H_1:LocFin{\mathcal
M}_{K[\Gamma]}\rightarrow LocFin Rep\, \Gamma$.

Recall from \cite{chin} that a representation $\mathcal{R}$ is
locally nilpotent if for any $u\in \Gamma_0$ and any $x\in V_u$, the
set $\{ p|\, p \mbox{ path with }f_p(x)\neq 0\}$ is finite. This is
easily seen to be equivalent to each $x\in \oplus_{u\in
\Gamma_0}V_u$ being annihilated by a monomial ideal of finite
codimension. Denote by $LocNilp Rep\, \Gamma$ the category of
locally nilpotent representations, which is clearly a subcategory of
$LocFin Rep\, \Gamma$.

We have the following diagram

$$\xymatrix{
{}^{K\Gamma}\Mm \ar[r]^{F^\theta}\ar[d]^{\sim}_{H_2} &
{}^{K[\Gamma]^0}\Mm\ar[r]^(.38)G_(.38){\sim} &
LocFin \Mm_{K[\Gamma]} \ar[r]^(.55){I_1}\ar[d]_{H_1}^{\sim} & u.\Mm_{K[\Gamma]}\ar[d]_H^\sim \\
LocNilp Rep\,\Gamma \ar[rr]^{I_2} & & LocFin Rep\,\Gamma
\ar[r]^(.55){I_3} & Rep\,\Gamma }$$

Here $G$ is the equivalence of categories as in Proposition
\ref{propequivrepcorep} (the version for right modules), and the
$I_j$'s are inclusion functors. It is easy to see that the image (on
objects) of the functor $H_1GF^{\theta}$ lies in $LocNilp Rep\,
\Gamma$, so we actually have a functor $H_2:\,
^{K\Gamma}\mathcal{M}\rightarrow LocNilp Rep\, \Gamma$, and this is
just the equivalence  noticed in \cite[Proposition 6.1]{chin}. In
this way, at the level of representations, the functor $F^\theta$
can be regarded as a functor (embedding) from the locally nilpotent
quiver representations to the locally finite quiver representations.

We can now characterize precisely when the path coalgebra can be
recovered from the quiver algebra, that is, when the above mentioned
embedding $\theta$ is an isomorphism.

\begin{theorem}\label{t.pathalgco}
Let $\Gamma$ be a quiver. The following assertions are equivalent:\\
(i) $\Gamma$ has no oriented cycles and between every two vertices of $\Gamma$ there are only finitely many arrows.\\
(ii) $\theta(K\Gamma)=K[\Gamma]^0$.\\
(iii) Every cofinite ideal of $K[\Gamma]$ contains a cofinite monomial ideal.\\
(iv) The functor $F^\theta:\,^{K\Gamma}\Mm\rightarrow
\, ^{K[\Gamma]^0}\Mm$ is an equivalence.\\
(v) Every locally finite quiver representation of $\Gamma$ is
locally nilpotent.
\end{theorem}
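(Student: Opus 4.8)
The plan is to establish the cycle of implications $(i)\Rightarrow(ii)\Rightarrow(iii)\Rightarrow(i)$, and separately connect (iv) and (v) to this cycle via the diagram preceding the theorem. The combinatorial Proposition \ref{p.comb} will be the workhorse: it translates the absence of oriented cycles (together with finite arrow-multiplicity) into the statement that any finite set of vertices supports only finitely many paths.

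For $(i)\Rightarrow(iii)$: let $I$ be a cofinite ideal of $K[\Gamma]$. By Lemma \ref{lemaidealcofinit}, only finitely many vertices $e_1,\dots,e_n$ lie outside $I$; let $E=\{e_1,\dots,e_n\}$. I claim the monomial ideal $J$ spanned by all paths passing through some vertex \emph{not} in $E$ is contained in $I$ and is cofinite. Cofiniteness is exactly Proposition \ref{p.comb}(ii) under hypothesis (i): the complement of $J$ is spanned by paths through $E$ only, a finite set. For the containment $J\subseteq I$: a path $p$ through a vertex $v\notin E$ factors as $p=qvr$ (inserting the length-zero path $v$), and $v\in I$ since $v\notin E$, so $p\in I$ because $I$ is two-sided. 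Then $(iii)\Rightarrow(ii)$ is immediate from Proposition \ref{p.qalco}: given $f\in K[\Gamma]^0$, its kernel contains a cofinite ideal, hence by (iii) a cofinite monomial ideal, hence $f\in\theta(K\Gamma)$; the reverse inclusion $\theta(K\Gamma)\subseteq K[\Gamma]^0$ was already noted.

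The implication $(ii)\Rightarrow(i)$ I would prove by contraposition, exhibiting, when (i) fails, an element of $K[\Gamma]^0$ not in $\theta(K\Gamma)$. If $\Gamma$ has an oriented cycle, the subquiver containing a loop or cycle yields a copy of $K[X]$ or a truncated-polynomial quotient inside $K[\Gamma]$, and the argument from the excerpt's $K[X]$ example applies: the grouplike elements of $K[\Gamma]^0$ (algebra maps to $K$) are more numerous than those of $\theta(K\Gamma)$ (which correspond to vertices), giving a functional—say one detecting a nonzero scalar on the cycle—whose kernel contains a cofinite ideal but no cofinite monomial ideal. If instead there are infinitely many arrows $a_1,a_2,\dots$ from $u$ to $v$, one takes the functional $f$ with $f(a_i)=1$ for all $i$ and $f$ vanishing on all other paths: its kernel contains the cofinite ideal spanned by paths of length $\ge 2$ together with vertices other than $u,v$ — wait, that ideal is not cofinite; instead one uses that $f$ vanishes on a cofinite \emph{ideal} obtained by quotienting out a suitable finite-codimension ideal detecting the common "slot" $u\to v$, but $f\notin\theta(K\Gamma)$ since $\theta(K\Gamma)$ consists of \emph{finite} combinations of path-dual functionals. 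This case is the one I expect to require the most care: pinning down the precise cofinite ideal killed by such an infinite-support functional, and confirming it contains no cofinite monomial ideal.

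Finally, for the equivalence with (iv) and (v): the diagram shows $F^\theta$ is (up to the equivalences $H_2,G,H_1,H$) the inclusion $I_2\colon LocNilp\,Rep\,\Gamma\hookrightarrow LocFin\,Rep\,\Gamma$, so $F^\theta$ is an equivalence iff $I_2$ is iff every locally finite representation is locally nilpotent — this is precisely $(iv)\Leftrightarrow(v)$. For $(iii)\Leftrightarrow(v)$: a finite-dimensional representation corresponds to a finite-dimensional unital $K[\Gamma]$-module $V\cong K[\Gamma]/I$ with $I$ cofinite (for the relevant idempotent block), and $V$ is locally nilpotent iff each element is annihilated by a cofinite monomial ideal iff $I$ contains a cofinite monomial ideal — matching the earlier remark that local nilpotence means annihilation by cofinite monomial ideals. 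Passing from finite-dimensional to locally finite representations is routine since local nilpotence is checked element by element. Then $(iii)\Leftrightarrow(v)$ closes the loop, and combined with $(i)\Leftrightarrow(ii)\Leftrightarrow(iii)$ and $(iv)\Leftrightarrow(v)$ all five are equivalent.
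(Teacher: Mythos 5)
Your implications $(i)\Rightarrow(iii)\Rightarrow(ii)$ are correct and essentially identical to the paper's argument (Lemma \ref{lemaidealcofinit} plus Proposition \ref{p.comb} for the first, Proposition \ref{p.qalco} for the second), and your treatment of $(iv)\Leftrightarrow(v)$ via the diagram matches the paper. Your proposed direct link $(iii)\Leftrightarrow(v)$ differs from the paper, which instead proves $(ii)\Leftrightarrow(iv)$ by the general fact that corestriction along a coalgebra inclusion $C\subseteq D$ is an equivalence iff $C=D$; your route is workable in principle but would need care with non-cyclic modules and unitality, whereas the paper's is cleaner.

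The genuine gap is $(ii)\Rightarrow(i)$ (equivalently $(iii)\Rightarrow(i)$), and you have flagged it yourself. What is needed in each of the two failure cases is an \emph{explicit} cofinite two-sided ideal of $K[\Gamma]$ containing no cofinite monomial ideal, and your sketch does not produce one. For infinitely many arrows $x_0,x_1,\dots$ from $a$ to $b$, the functional $f$ with $f(x_n)=1$ is the right witness, but to see $f\in K[\Gamma]^0$ you must exhibit the cofinite ideal in its kernel: take $I$ to be the span of $\{x_n-x_0\mid n\geq 1\}$ together with all paths other than $a$, $b$ and the $x_n$; one checks this is an ideal (multiplying $x_n-x_0$ by a vertex gives $\pm(x_n-x_0)$ or $0$), it has codimension $3$, and it contains no $x_n$, hence no cofinite monomial ideal. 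Your fallback phrase about ``quotienting out a suitable finite-codimension ideal detecting the slot $u\to v$'' is exactly the missing construction, not a proof of it. For the cycle case, your appeal to grouplike elements of $K[X]^0$ works only when the whole quiver is a single loop; for a general quiver containing a cycle $v_0\to v_1\to\cdots\to v_s=v_0$ you must build the ideal inside all of $K[\Gamma]$: the paper takes $X$ to be the set of paths winding around the cycle, $H$ the span of all other paths (an ideal because $X$ is closed under subpaths), $E$ the span of the differences $q_{n,ks+i}-q_{n,i}$, and verifies by direct computation that $I=E+H$ is a cofinite ideal in which no single path lies, so $I$ contains no cofinite monomial ideal. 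Without these two constructions the contrapositive of $(ii)\Rightarrow(i)$ is not established.
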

\begin{proof}
The equivalence of $(ii)$ and $(iv)$ is a general coalgebra fact: if
$C\subseteq D$ is an inclusion of coalgebras, then the corestriction
of scalars $F:\,^C\Mm\rightarrow \, ^D\Mm$ is an equivalence if and
only if $C=D$. Indeed, if $F$ is an equivalence, pick an arbitrary
$x\in D$ and let $N=xD^*\in \,^D\Mm$ be the finite dimensional
$D$-subcomodule of $D$ generated by $x$. Then $N\simeq F(M)$, $M\in
\, ^C\Mm$, and considering the coalgebras  of
coefficients $C_N$ and $C_M$ of $N$ and $M$, we see that $C_N=C_M\subseteq C$ by the definition of $F$. Since $x\in C_N$, this ends the argument.\\
The equivalence of $(ii)$ and $(iii)$ follows immediately from Proposition \ref{p.qalco}.\\
$(iv)\Leftrightarrow (v)$ The previous remarks on $F^\theta$ (and
the diagram drawn there) show that $F^\theta$ is an equivalence
functor if and only if so is $I_2$. On the other hand, the inclusion
functor $I_2$ is an equivalence if and only if every locally finite
quiver representation of $\Gamma$ is
locally nilpotent.\\
$(i)\Rightarrow (iii)$
 Let $I$ be an ideal of $K[\Gamma]$ of finite codimension. By Lemma
 \ref{lemaidealcofinit} applied for the algebra $K[\Gamma]$ and the
 complete set of orthogonal idempotents $\Gamma_0$, we have that
  the set $S'=\{a \in
\Gamma_0| a\not\in I\}$ must be finite.
 Let $S=\{a \in
\Gamma_0| a\in I\}$. Note that any path $p$ starting or ending at a
vertex in $S$ belongs to $I$, since $p=s(p)p=pt(p)\in I$ if either
$s(p)\in I$ or $t(p)\in I$. Furthermore, this shows that if $p$
contains a vertex in $S$, then $p\in I$, since in that case $p=qr$
with $x=t(q)=s(r)\in S$. Denote the set of paths containing some
vertex in $S$ by $M$. Let $H$ be the vector space spanned by $M$ and
let $M'$ be the set of the rest of the paths in $\Gamma$. Obviously,
$M'$ consists of the paths whose all vertices belong to $S'$. Since
$S'$ is finite, we see that $M'$ is finite, by the conditions of
$(i)$ and Proposition \ref{p.comb}. Therefore $H$ has finite
codimension.
 Also, since $H$ is spanned by paths passing
through some vertex in $S$, we see that $H$ is an ideal. We conclude
that $I$ contains the cofinite monomial ideal $H$.\\
$(iii)\Rightarrow (i)$ We show first that there are no oriented
cycles in $\Gamma$. Assume $\Gamma$ has a cycle
$C:v_0\stackrel{x_0}{\longrightarrow}v_1\stackrel{x_1}{\longrightarrow}\dots
\longrightarrow v_{s-1}\stackrel{x_{s-1}}{\longrightarrow}v_s=v_0$,
and consider such a cycle that does not self-intersect. We can
consider the vertices $v_0,\dots,v_{s-1}$ modulo $s$. Denote by
$q_{n,k}$ the path starting at the vertex $v_n$ ($0\leq n \leq
s-1$), winding around the cycle $C$ and of length $k$. Denote again
by $P$ the set of all paths in $\Gamma$, and by $X=\{q_{n,k}|0\leq n
\leq s-1, k\geq 0\}$. Since the set $X$ is closed under subpaths, it
is easy to see that the vector space $H$ spanned by the set
$P\setminus X$ is an ideal of $K[\Gamma]$. Let $E$ be the subspace
spanned by $S=\{q_{n,ks+i}-q_{n,i}|0\leq n\leq s-1, i\geq 0, k\geq
1\}$, and let $I=E+H$. We have that
\begin{eqnarray*}
(q_{n,ks+i}-q_{n,i})q_{n+i,j} & = & q_{n,ks+i+j}-q_{n,i+j}\in S \\
(q_{n,ks+i}-q_{n,i})q_{m,j} & = & 0 \;\; \mbox{for}\;\; m\neq n+i\\
q_{m,j}(q_{n,ks+i}-q_{n,i}) & = & q_{m,ks+i+j}-q_{m,i+j}\in S \;\;
\mbox{if}\;\; m+j=n\\
q_{m,j}(q_{n,ks+i}-q_{n,i}) & = & 0 \;\; \mbox{if}\;\; m+j\neq n
\end{eqnarray*}
Here in the notation $q_{n,i}$ the first index is considered modulo
$s$, while the second index is a nonnegative integer. The above
equations show that if we multiply an element of $S$ to the left (or
right) by an element of $X$, we obtain either an element of $S$ or
0. Combined with the fact that $H$ is an ideal, this
shows that $I$ is an ideal.\\
It is clear that $I$ has finite codimension, since $S\cup \{
q_{n,i}|0\leq n\leq s-1,0\leq i\leq s-1\}$ spans $KC=\langle X
\rangle$. Indeed, if $0\leq n\leq s-1$ and $j$ is a non-negative
integer, write $j=ks+i$ with $k\geq 0$ and $0\leq i\leq s-1$, and we
have that
$q_{n,j}=q_{n,ks+i}=(q_{n,ks+i}-q_{n,i})+q_{n,i}$.\\
On the other hand, $I$ does not contain a cofinite monomial ideal.
Indeed, it is easy to see that an element of the form $q_{m,j}$
cannot be in $\langle S\cup (P\setminus X) \rangle=I$, so any
monomial ideal
contained in $I$ must have infinite codimension.\\
 Thus,
we have found a cofinite ideal $I$ which does not contain a cofinite
monomial ideal. This contradicts $(iii)$, and we conclude that $\Gamma$ cannot contain cycles.\\
We now show that in $\Gamma$ there are no pair of vertices with
infinitely many arrows between them. Assume such a situation exists
between two vertices $a,b$: $a\stackrel{x_n}{\longrightarrow}b$,
$n\in\ZZ_{\geq 0}$. We let $X=\{x_n|n\in\ZZ_{\geq 0}\}\cup \{a,b\}$,
$H$ be the span of $P\setminus X$, which is an ideal since $X$ is
closed under taking subpaths. Let $S=\{x_n-x_0|n\geq 1\}$ and $I$ be
the span of $S\cup (P\setminus X)$. As above, since $x_n-x_0$
multiplied by an element of $X$ gives either $x_n-x_0$ or 0, we have
that $I$ is an ideal. $I$ has finite codimension since $\{ a,b,x_0\}
\cup S\cup (P\setminus X)$ spans $K\Gamma$. Also, $I$ does not
contain a monomial ideal of finite codimension since no $x_n$ lies
in $I$. Thus we contradict $(iii)$. In conclusion there are finitely
many arrows between any two vertices, and this ends the proof.
\end{proof}

It is clear that a finite quiver $\Gamma$ (i.e. $\Gamma_0$ and
$\Gamma_1$ are finite) without oriented cycles satisfies condition
(i) in Theorem 3.3. In this case $K\Gamma$ is finite dimensional,
and we obviously have $K[\Gamma]=(K\Gamma)^*$ (i.e. the map $\theta$
is bijective) and also $K\Gamma=K[\Gamma]^0$. This can also be
thought as a trivial case of the above theorem.



We now present a few examples to further illustrate the above
theorem.

\begin{example}
Let ${}_\infty{\mathbb A}_{\infty}$ be the infinite line quiver
$$\dots \rightarrow\bullet\rightarrow \bullet\rightarrow \dots \rightarrow \bullet\rightarrow \dots$$
The quiver coalgebra $C=K{}_\infty{\mathbb A}_{\infty}$ of this
quiver is serial, that is, the injective indecomposable left and
right comodules are uniserial, i.e. they have a unique composition
series (see \cite{GTN}). For such a coalgebra, the finite
dimensional comodules are easily classified: they are all serial
(see \cite{GTN}). Moreover, the indecomposable finite dimensional
comodules, i.e. the uniserial ones, correspond to finite paths in
${}_\infty{\mathbb A}_\infty$. Note that this quiver satisfies the
conditions of Theorem \ref{t.pathalgco}, and so the locally
nilpotent representations of ${}_\infty{\mathbb A}_\infty$ (i.e. the
comodules over $K{}_\infty{\mathbb A}_\infty$) coincide with the
locally finite representations of the quiver algebra
$A=K[{}_\infty{\mathbb A}_\infty]$, and also, the finite dimensional
quiver representations of ${}_\infty{\mathbb A}_\infty$ are the
comodules over $K{}_\infty{\mathbb A}_\infty$. Moreover, the
coalgebra of
representative functions on $K[{}_\infty{\mathbb A}_\infty]$ is isomorphic to $K{}_\infty{\mathbb A}_\infty$.\\
Note that in general, it is not easy to describe arbitrary comodules
even for a serial coalgebra. By results in \cite{I1}, if an infinite
dimensional indecomposable injective comodule exists, then there are
comodules which do not decompose into indecomposable comodules (and,
in particular, are not indecomposable). Moreover, for the left
bounded infinite quiver ${\mathbb A}_\infty: \bullet\rightarrow
\bullet \rightarrow \dots \rightarrow \bullet \rightarrow \dots$, it
is shown in \cite{I1} that all left comodules over $K{\mathbb
A}_\infty$ are serial direct sums of indecomposable uniserial
comodules corresponding to finite paths, while in the category of
right comodules over $K{\mathbb A}_\infty$ there are objects which
do not decompose into direct sums of indecomposables.
\end{example}

\begin{example}
Let ${\mathbb C}_n$ be the following quiver of affine Dynkin type $\widetilde{A_n}$:\\
$\xymatrix{& &\circ^1\ar[r] & \circ^2\ar[r] & \circ^3\ar[dr] &  \\
\CC_n  & \circ^n\ar[ur] & & & & \circ^4\ar[dl]\\
& & \dots\ar[ul]& \dots & \circ\ar[l] &}$\\
The path coalgebra $K{\mathbb C}_n$ is again serial, and the finite
dimensional (left and right) comodules are all direct sum of
uniserial objects (corresponding to finite paths). These correspond
to finite dimensional locally nilpotent representations. This quiver
does not satisfy the hypothesis of Theorem \ref{t.pathalgco}. We
give an example of a quiver representation which is locally finite
(even finite dimensional) but not locally nilpotent. Let $x_1,\dots,
x_n$ denote the arrows of the quiver, with $a_i=s(x_i)$. Let
$M=K[{\mathbb C}_n]/I$ where $I$ is the (two sided) ideal generated
by elements $p-1$, where $p$ is a path of length $n$ and with
$1=a_1+\dots +a_n$ ($M$ is actually an algebra). One can easily see
that $M$ is spanned as a vector space by paths of length less than
$n$. A not too difficult computation shows that $I$ does not contain
any monomial ideal of finite codimension, and so $M$ as a
representation of $K[{\mathbb{C}}_n]$ is not locally nilpotent, but
it is finite dimensional. We again note that the infinite
dimensional comodules over the coalgebra $K{\mathbb C}_n$ are hard
to
understand, as there are both left and right comodules which are not direct sum of indecomposable comodules.\\
An easy particular example of this can be obtained for $n=1$; in
this case, $K[{\mathbb C}_n]\cong K[X]$ - the polynomial algebra. As
noted before, the finite dual of this algebra is not the path
coalgebra of ${\mathbb C}_1$. Also, the representation $K[X]/(X-1)$
is not locally nilpotent.
\end{example}

Let $\psi:K[\Gamma]\rightarrow (K\Gamma )^*$ be the linear map
defined by $\psi(p)(q)=\delta_{p,q}$ for any paths $p$ and $q$. In
fact $\psi$ is just $\theta$ as a linear map, but we denote it
differently since we regard it now as a morphism in the category of
algebras not necessarily with identity. Indeed, it is easy to check
that $\psi$ is multiplicative. Thus the quiver algebra embeds in the
dual of the path coalgebra. Our aim is to show that in certain
situations $K[\Gamma]$ can be recovered from $(K\Gamma )^*$ as the
rational part. Obviously, this is the case when $K[\Gamma]$ is
finite dimensional, which will also be seen as a consequence of the
next result, which characterizes completely these situations. We
recall that if $C$ is a coalgebra, the rational part of the
 left $C^*$-module  $C^*$, consisting of all elements $f\in
 C^*$ such that there exist finite families $(c_i)_i$ in $C$ and
 $(f_i)_i$ in $C^*$ with $c^*f=\sum_ic^*(c_i)f_i$ for any $c^*\in
 C^*$, is denoted by $C^{*rat}_l$. This is the largest
 $C^*$-submodule which is rational, i.e., whose $C^*$-module
 structure comes from a right $C$-comodule structure. Similarly,
 $C^{*rat}_r$ denotes the rational part of the right $C^*$-module
 $C^*$. A coalgebra $C$ is called right (respectively left)
 semiperfect if the category of right (respectively left)
 $C$-comodules has enough projectives. This is equivalent to the
 fact that $C^{*rat}_l$ (respectively $C^{*rat}_r$) is dense in
 $C^*$ in the finite topology, see \cite[Section 3.2]{DNR}.

\begin{theorem} \label{thpathsemiperfect}
The following are equivalent.\\
(i) $Im(\psi)=(K\Gamma)^{*rat}_l$.\\
(ii) $Im(\psi)=(K\Gamma)^{*rat}_r$.\\
(iii) For any vertex $v$ of $\Gamma$ there are finitely many
paths starting at $v$ and finitely many paths ending at $v$.\\
(iv) The path coalgebra $K\Gamma$ is left and right semiperfect.
\end{theorem}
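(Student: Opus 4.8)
The plan is to prove the cyclic chain of implications $(iii)\Rightarrow(i)$, $(iii)\Rightarrow(ii)$, $(i)\Rightarrow(iii)$, $(ii)\Rightarrow(iii)$, and then handle the equivalence with $(iv)$ separately using the semiperfectness criterion quoted from \cite[Section 3.2]{DNR}. The key structural observation to set up first is a description of the comodule structure underlying $\mathrm{Im}(\psi)$: for a path $p$, the functional $\psi(p)=\theta(p)$ satisfies, in $(K\Gamma)^*$, the rational-type equations coming from $\Delta(q)=\sum_{rs=q}r\otimes s$, namely $c^*\cdot\psi(p)=\sum_{p=rs}c^*(r)\psi(s)$ and $\psi(p)\cdot c^*=\sum_{p=rs}c^*(s)\psi(r)$. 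Thus $\mathrm{Im}(\psi)$ is always contained in both $(K\Gamma)^{*rat}_l$ and $(K\Gamma)^{*rat}_r$; what is at stake is the reverse inclusion, i.e.\ whether these rational parts are spanned by the $\theta(p)$'s.

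For $(iii)\Rightarrow(i)$ I would argue as follows. Recall $(K\Gamma)^{*rat}_l$ carries a right $K\Gamma$-comodule structure, so it decomposes according to the coalgebra structure; concretely, a functional $f\in(K\Gamma)^*$ lies in $(K\Gamma)^{*rat}_l$ iff $K\Gamma^*\rightharpoonup f$ is finite dimensional, and $\{p^*\rightharpoonup f\}$ ranges over functionals supported on "right-truncations'' of paths appearing in $f$. Fix a vertex $v$; the element $\psi(v)=v^*$ acts as a projection picking out the part of $f$ supported on paths $p$ with $s(p)=v$ (because $v\cdot p = p$ iff $s(p)=v$ in $K[\Gamma]$, dually $v^*\rightharpoonup$ isolates those paths). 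Under hypothesis $(iii)$ there are only finitely many paths $p$ with $s(p)=v$, so each such "component'' of $f$ is a finite linear combination of $\theta(p)$'s, hence already in $\mathrm{Im}(\psi)$; summing over the finitely many vertices that actually occur (finiteness of the support in the relevant direction being forced by rationality via Lemma \ref{lemaidealcofinit}-type reasoning) gives $f\in\mathrm{Im}(\psi)$. The argument for $(iii)\Rightarrow(ii)$ is the mirror image, using paths ending at $v$ and the right action.

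For the converses $(i)\Rightarrow(iii)$ and $(ii)\Rightarrow(iii)$, I would argue by contraposition: suppose some vertex $v$ has infinitely many paths $p_1,p_2,\dots$ starting at it. Consider the functional $f\in(K\Gamma)^*$ defined by $f(p_i)=1$ for all $i$ and $f=0$ on all other paths. Then I claim $f\in(K\Gamma)^{*rat}_l$ but $f\notin\mathrm{Im}(\psi)$: the latter is clear since $f$ has infinite support while every element of $\mathrm{Im}(\psi)$ is a finite combination of $\theta(p)$'s; for the former, one computes $p^*\rightharpoonup f$ for a path $p$ and checks it lands in the finite-dimensional span of the $\theta$'s of truncations of paths starting at $v$ — here one needs $(iii)$ to fail only for \emph{starting} paths, but to run this cleanly one should instead pick $v$ violating the condition and a \emph{careful} choice of $f$ so that $K\Gamma^*\rightharpoonup f$ stays finite dimensional; the natural candidate is the sum of $\theta$ over all paths of length $\le 1$ starting at $v$ when there are infinitely many arrows out of $v$, or a geometric-series-type combination along a long path. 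This is the main obstacle: producing, from the failure of $(iii)$, an explicit rational (left or right) functional not in $\mathrm{Im}(\psi)$, and verifying finite-dimensionality of the orbit under $\rightharpoonup$ (resp.\ $\leftharpoonup$) in each of the two bad cases (infinitely many arrows between two vertices, or an infinite "outgoing tree'' forced by absence of the finiteness condition). One must be slightly delicate because $(iii)$ is a two-sided condition and $(i)$ only sees the left side, so the counterexample for $\neg(i)$ should be built from a vertex with infinitely many paths \emph{starting} at it, and symmetrically for $(ii)$; absence of such a vertex on the appropriate side is exactly what makes the orbit finite dimensional.

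Finally, for $(iii)\Leftrightarrow(iv)$: by \cite[Section 3.2]{DNR}, $K\Gamma$ is left (resp.\ right) semiperfect iff $(K\Gamma)^{*rat}_l$ (resp.\ $(K\Gamma)^{*rat}_r$) is dense in $(K\Gamma)^*$ in the finite topology. Since $\mathrm{Im}(\psi)$ — the span of the $\theta(p)$'s — is obviously dense in $(K\Gamma)^*$ in the finite topology (its closure separates points of the basis of paths), the density of $(K\Gamma)^{*rat}_l$ is automatic once it \emph{contains} $\mathrm{Im}(\psi)$, which it always does. Wait — that would make $K\Gamma$ always semiperfect, which is false, so the correct reading is that one uses the characterization of semiperfectness via finiteness of injective envelopes of simple comodules: $K\Gamma$ is right semiperfect iff each indecomposable injective right comodule $E(v)$ is finite dimensional, and $E(v)$ here is the span of paths starting (or ending, depending on side conventions) at $v$; this is finite dimensional for all $v$ precisely under $(iii)$. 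So I would phrase $(iii)\Leftrightarrow(iv)$ directly through the injective-envelope description of semiperfect coalgebras, identifying $E_l(v)$ and $E_r(v)$ with the path-spaces in question, rather than through the density statement. Combining this with the already-established $(i)\Leftrightarrow(iii)\Leftrightarrow(ii)$ completes the proof.
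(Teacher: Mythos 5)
Your opening ``structural observation'' is false, and it undermines the whole proposal. In $(K\Gamma)^*$ the product is convolution, so for a path $p$ and any $c^*$ one has $(c^*\psi(p))(q)=\sum_{rs=q}c^*(r)\psi(p)(s)$, which equals $c^*(r)$ when $q=rp$ and $0$ when $p$ is not a terminal segment of $q$; your formula $c^*\cdot\psi(p)=\sum_{p=rs}c^*(r)\psi(s)$ decomposes $p$ instead of $q$ and is not the convolution action. The correct formula shows that $(K\Gamma)^*\cdot\psi(p)$ is the span of $\{q^*: q \mbox{ ends with } p\}$, so $\psi(p)\in(K\Gamma)^{*rat}_l$ \emph{precisely when only finitely many paths end with $p$} --- it is not automatic. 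Concretely, for the quiver with one arrow $u_i\to v$ for each $i\in\NN$, the functional $v^*$ is not left-rational, so $Im(\psi)\not\subseteq(K\Gamma)^{*rat}_l$. This is exactly what your own sanity check detects when you note that your reading ``would make $K\Gamma$ always semiperfect, which is false'': the density criterion from \cite[Section 3.2]{DNR} is correct, and the paradox you ran into is caused by the false containment, not by a misreading of the criterion. Consequently your $(iii)\Rightarrow(i)$ is missing half its content: the inclusion $Im(\psi)\subseteq(K\Gamma)^{*rat}_l$ must be \emph{proved from} $(iii)$ (the paper does this by listing the finitely many paths $q_1=r_1p,\ldots,q_n=r_np$ ending with $p$ and writing $c^*p^*=\sum_i c^*(r_i)q_i^*$), while the reverse inclusion you sketch (projecting by $v^*$ onto paths starting at $v$ and using that a rational $f$ is killed by all but finitely many such projections) is essentially the paper's argument and is fine.

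The second genuine gap is the one you flag yourself as ``the main obstacle,'' namely $(i)\Rightarrow(iii)$, and it does not get resolved in your sketch. The paper closes it with a two-stage argument you need: \emph{first}, from $(i)$ one knows $v^*\in(K\Gamma)^{*rat}_l$, and writing $c^*v^*=\sum_ic^*(c_i)c^*_i$ forces the set of paths \emph{ending} at $v$ to be finite (otherwise pick a path $q$ ending at $v$ that appears in no $c_i$ and a $c^*$ with $c^*(q)\neq 0$, $c^*(c_i)=0$); in particular $\Gamma$ has no cycles through $v$. \emph{Only then}, assuming infinitely many paths $p_1,p_2,\ldots$ \emph{start} at $v$, does one take your candidate $c^*$ (equal to $1$ on each $p_i$, $0$ elsewhere) and verify its rationality: $(d^*c^*)(q)=d^*(r)$ exactly when $q=rp_i$, and since there are only finitely many paths $r_1,\ldots,r_m$ ending at $v$ (by stage one) and the decompositions $r_jp_i$ are pairwise distinct (no cycles at $v$), one gets $d^*c^*=\sum_j d^*(r_j)c^*_j$ with finitely many fixed $c^*_j$. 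Without establishing the ``ending'' finiteness first, the finite-dimensionality of the orbit of your $c^*$ cannot be verified, which is precisely where your sketch stalls. Your fallback for $(iii)\Leftrightarrow(iv)$ via finite-dimensionality of the injective envelopes $E(Kv)$ (spans of paths starting, respectively ending, at $v$) is correct and is what the paper's citation of \cite[Corollary 6.3]{chin} amounts to.
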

\begin{proof}
$(iii)\Rightarrow (i)$ Let $p$ be a path. We show that
$p^*=\psi(p)\in Im(\psi)$. If $c^*\in (K\Gamma)^*$ and $q$ is a
path, we have that
$$(c^*p^*)(q)=\sum_{rs=q}c^*(r)p^*(s)=\left\{
\begin{array}{l}
c^*(r),\mbox{ if }q=rp\;\mbox{for some path }r,\\
0, \mbox{ if }q\mbox{ does not end with }p.
\end{array}\right.$$
Let $q_1=r_1p,\ldots,q_n=r_np$ be all the paths ending with $p$. By
the formula above, $(c^*p^*)(q_i)=c^*(r_i)$ for any $1\leq i\leq n$,
and $(c^*p^*)(q)=0$ for any path $q\neq q_1,\ldots,q_n$. This shows
that $c^*p^*=\sum_{1\leq i\leq n}c^*(r_i)q^*_i$, thus $p^*\in
(K\Gamma)^{*rat}_l$, and we have that $Im(\psi)\subseteq
(K\Gamma)^{*rat}_l$.

Now let $c^*\in (K\Gamma)^{*rat}_l$, so there exist $(c_i)_{1\leq
i\leq n}$ in $K\Gamma$ and $(c^*_i)_{1\leq i\leq n}$ in
$(K\Gamma)^*$ such that $d^*c^*=\sum_{1\leq i\leq n}d^*(c_i)c^*_i$
for any $d^*\in (K\Gamma)^*$. Let $p_1,\ldots,p_m$ be all the paths
that appear with non-zero coefficients in some of the $c_i$'s
(represented as a linear combination of paths). Then for any $p\neq
p_1,\ldots,p_m$ we have that $p^*(c_i)=0$, so then $p^*c^*=0$. Let
$v$ be a vertex such that no one of $p_1,\ldots,p_m$ passes through
$v$. Then for any path $p$ starting at $v$ we have that
$0=(v^*c^*)(p)=v^*(v)c^*(p)=c^*(p)$. Therefore $c^*$ may be non-zero
on a path $p$ only if $s(p)\in \{p_1,\ldots,p_m\}$. By condition
$(iii)$, there are only finitely many such paths $p$, denote them by
$q_1,\ldots,q_e$. Then $c^*=\sum_{1\leq i\leq e}c^*(q_i)q_i^*\in
Im(\psi)$, and we also have that $(K\Gamma)^{*rat}_l\subseteq
Im(\psi)$.\\
$(i)\Rightarrow (iii)$ Let $v$ be a vertex. Then $v^*=\psi(v)\in
(K\Gamma)^{*rat}_l$, so there exist finite families $(c_i)\subseteq
K\Gamma$ and $(c^*_i)_i\subseteq (K\Gamma)^*$ such that
$c^*v^*=\sum_ic^*(c_i)c^*_i$ for any $c^*\in (K\Gamma)^*$. Then for
any path $q$,
\begin{equation} \label{eqvstar}
\sum_ic^*(c_i)c^*_i(q)=(c^*v^*)(q)=\left\{
\begin{array}{l}
c^*(q),\mbox{ if }q\;\mbox{ends at }v,\\
0, \mbox{ otherwise. }
\end{array}\right.
\end{equation}
If there exist infinitely many paths ending at $v$, we can find one
such path $q$ which does not appear in the representation of any
$c_i$ as a linear combination of paths. Then there exists $c^*\in
(K\Gamma)^*$ with $c^*(q)\neq 0$ and $c^*(c_i)=0$ for any $i$, in
contradiction with (\ref{eqvstar}). Thus only finitely many paths
can end at $v$. In particular $\Gamma$ does not have cycles.

On the other hand, if we assume that there are infinitely many paths
$p_1,p_2,\ldots$ starting at $v$, let $c^*\in (K\Gamma)^*$ which is
1 on each $p_i$ and 0 on any other path. Clearly $c^*\notin
Im(\psi)$. We show that $c^*\in (K\Gamma)^{*rat}_l$, and the
obtained contradiction shows that only finitely many paths start at
$v$. Indeed, we have that
\begin{equation}\label{eqcstar}
(d^*c^*)(q)=\left\{
\begin{array}{l}
d^*(r),\mbox{ if }q=rp_i\;\mbox{for some }i\geq 1\;\mbox{and some path }r\\
0, \mbox{ otherwise }
\end{array}\right.
\end{equation}
Let $r_1,\ldots,r_m$ be all the paths ending at $v$ (they are
finitely many as we proved above). For each $1\leq j\leq m$ we
consider the element $c^*_j\in (K\Gamma)^*$ which is 1 on every path
of the form $r_jp_i$, and 0 on any other path. Using (\ref{eqcstar})
and the fact that $r_jp_i\neq r_{j'}p_{i'}$ for $(i,j)\neq (i',j')$
(this follows because $r_j$, $r_{j'}$ end at $v$ and $p_i, p_{i'}$
start at $v$, and there are no cycles containing $v$), we see that
$d^*c^*=\sum_{1\leq j\leq m}d^*(r_j)c^*_j$, and
this will guarantee that $c^*$ is a rational element.\\
$(ii)\Leftrightarrow (iii)$ is similar to $(i)\Leftrightarrow (iii)$.\\
$(iii)\Leftrightarrow (iv)$ follows from \cite[Corollary 6.3]{chin}.
\end{proof}

\section{Incidence coalgebras and incidence
algebras}\label{sectincidence}

In this section we parallel the results in Section
\ref{sectquiverpath} in the framework of incidence (co)algebras. Let
$(X,\leq)$ be a partially ordered set which is locally finite, i.e.,
the set $\{z|x\leq z\leq y\}$ is finite for any $x\leq y$ in $X$.
The incidence coalgebra of $X$, denoted by $KX$, is the vector space
with basis $\{ e_{x,y}|x,y\in X, x\leq y\}$, and comultiplication
and counit  defined by $\Delta(e_{x,y})=\sum_{x\leq z\leq
y}e_{x,z}\otimes e_{z,y}$, $\epsilon (e_{x,y})=\delta_{x,y}$ for any
$x,y\in X$ with $x\leq y$. For such a $X$, we can consider the
quiver $\Gamma$ with vertices the elements of $X$, and such that
there is an arrow from $x$ to $y$ if and only if $x<y$ and there is
no element $z$ with $x<z<y$. It was proved in \cite{din} that the
linear map $\phi:KX\rightarrow K\Gamma$, defined by
$$\phi(e_{x,y})=\sum_{p\; {\rm path}\atop s(p)=x,t(p)=y}p$$ for any
$x,y\in X, x\leq y$, is an injective coalgebra morphism. We note
that this map is surjective if and only if in $\Gamma$ there is at
most one path between any to vertices $x,y\in X$. To see this, let
$P(x,y)$ denote the set of paths from $x$ to $y$. Note that the
incidence coalgebra $KX$ is then $KX=\bigoplus_{x,y\in X}\langle
P(x,y)\rangle$, and that $\phi(\langle e_{x,y} \rangle)\subset
P(x,y)$ for $x\leq y$. Thus, $\phi$ is surjective if and only if
$\dim(P(x,y))=1$ for all $x\leq y$, which is equivalent to the above
stated condition. In fact, this is also a consequence of the
following more general fact.

\begin{proposition}\label{p.IPcoalg}
A coalgebra $C$ is both an incidence coalgebra and a path coalgebra
if and only if it is the path coalgebra of a quiver $\Gamma$ for
which there is at most one path between any two vertices.
\end{proposition}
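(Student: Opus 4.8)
The plan is to prove the two implications separately; the ``only if'' direction is the substantial one.

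\emph{``If''.} Suppose $C=K\Gamma$ where $\Gamma$ has at most one path between any two vertices. I would order the vertex set $\Gamma_0$ by declaring $x\le y$ whenever there is a directed path in $\Gamma$ from $x$ to $y$: reflexivity comes from trivial paths, transitivity from concatenation, and antisymmetry from the hypothesis, since if $x\le y\le x$ with $x\ne y$ then concatenating the two nontrivial paths produces a path $x\to x$ distinct from the trivial one. This poset is locally finite: if $p$ is \emph{the} path from $x$ to $y$ and $x\le z\le y$, then concatenating a path $x\to z$ with a path $z\to y$ must reproduce $p$, so $z$ is a vertex of $p$, whence $[x,y]$ is finite. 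Next I would check that $\Gamma$ is isomorphic to the Hasse quiver $\Gamma_X$ of $(\Gamma_0,\le)$: an arrow $a\colon x\to y$ of $\Gamma$ forces $x\lessdot y$ (an intermediate vertex would yield a path $x\to y$ of length $\ge 2$ distinct from $a$), and conversely if $x\lessdot y$ the unique path from $x$ to $y$ must have length $1$ (a longer one would exhibit an intermediate vertex), there being exactly one such arrow by hypothesis. Consequently the coalgebra embedding $\phi\colon KX\to K\Gamma_X$ recalled just before the statement — which is surjective precisely when the quiver has at most one path between any two vertices — is here an isomorphism, so $C\cong K\Gamma_X\cong KX$ is an incidence coalgebra; it is a path coalgebra by assumption.

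\emph{``Only if''.} Assume $C\cong K\Gamma\cong KX$; I would deduce that $\Gamma$ has at most one path between any two vertices by using that the ``source--target'' decomposition of a pointed coalgebra is intrinsic. For a pointed coalgebra $C$ with set of grouplikes $G$, the coradical $\bigoplus_{g\in G}Kg$ is the socle of $C$ viewed as a comodule over itself; since $C$ is injective as a comodule over itself, $C=\bigoplus_{g\in G}{}^{g}C$ as left comodules, where ${}^{g}C$ is the injective envelope of the simple comodule $Kg$ (the unique summand whose socle is $Kg$), and dually $C=\bigoplus_{h\in G}C^{h}$ as right comodules. Put ${}^{g}C^{h}={}^{g}C\cap C^{h}$. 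A direct check — using that a one-dimensional left subcomodule of a path coalgebra is spanned by a single vertex — identifies ${}^{g}(K\Gamma)=\langle p\mid s(p)=g\rangle$ and $(K\Gamma)^{h}=\langle p\mid t(p)=h\rangle$, so $\dim{}^{g}(K\Gamma)^{h}$ equals the number of paths from $g$ to $h$; the analogous computation gives ${}^{x}(KX)^{y}=\langle e_{x,y}\rangle$, of dimension at most $1$. Now any coalgebra isomorphism $\alpha\colon K\Gamma\to KX$ carries grouplikes to grouplikes, hence induces a bijection $\sigma\colon\Gamma_0\to X$, and it is an isomorphism of left (respectively right) comodules over the two coalgebras, so it carries injective envelopes of simples to injective envelopes of simples: $\alpha({}^{g}(K\Gamma))={}^{\sigma(g)}(KX)$ and $\alpha((K\Gamma)^{h})=(KX)^{\sigma(h)}$, whence $\alpha({}^{g}(K\Gamma)^{h})={}^{\sigma(g)}(KX)^{\sigma(h)}$. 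Comparing dimensions, the number of paths from $g$ to $h$ in $\Gamma$ is at most $1$ (in particular, with $g=h$, only the trivial path, so $\Gamma$ has no cycles).

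The step I expect to be the main obstacle is pinning down the intrinsic nature of this bigrading: one has to verify carefully that ${}^{g}C$ really is a comodule direct summand with socle exactly $Kg$ — hence the injective envelope of $Kg$, and therefore preserved by any coalgebra isomorphism — both for path coalgebras and for incidence coalgebras, which for path coalgebras rests on the observation that a subcomodule isomorphic to a simple is spanned by one vertex. Everything else is routine: the construction of the poset, its local finiteness, the identification of $\Gamma$ with the Hasse quiver, and the dimension count all go through straightforwardly, each appeal to the uniqueness-of-paths hypothesis being exactly the place where one must exclude a second path with the same endpoints.
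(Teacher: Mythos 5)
Your proof follows essentially the same route as the paper's: the ``if'' direction constructs the order $x\le y$ iff there is a path from $x$ to $y$ (you add the useful verification that $\Gamma$ is then the Hasse quiver of this locally finite poset), and the ``only if'' direction is exactly the paper's argument that the grouplikes identify the two vertex sets and that the intersection ${}^{g}C\cap C^{h}$ of the left and right injective envelopes of simples --- the paper's $E_r(Kg)\cap E_l(Kh)$ --- has dimension equal to the number of paths from $g$ to $h$ on one side and at most $1$ on the other. The step you single out as the main obstacle, namely that these injective envelopes are intrinsically determined inside $C$ and hence preserved by any coalgebra isomorphism, is precisely the point the paper also rests on, asserting it with references to Simson and to Chin--Kleiner--Quinn.
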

\begin{proof}
If the condition is satisfied for a quiver $\Gamma$, we can
introduce an obvious order on the set $X$ of vertices of $\Gamma$
setting $x\leq y$ if and only if there is a path from $x$ to $y$. It
is easy to check that this is an ordering, and so the above map
$\phi: KX\rightarrow K\Gamma$ is bijective. Conversely, let $C\cong
KX\cong K\Gamma$ for a locally finite partially ordered set $X$ and
a quiver $\Gamma$. We note that the simple subcoalgebras (and simple
left subcomodules, simple right subcomodules) of $C$ are precisely
the spaces $Kx$ for $x\in X$ and $Kv$ for $v$ vertex in $\Gamma$,
and $X$, respectively $\Gamma$ correspond to the group-like elements
of $C$. Thus, $X$ must be the set of vertices of $\Gamma$.
Furthermore, we note that in either an incidence coalgebra or a path
coalgebra, the injective hull of a simple left comodule $Kx$ is
uniquely determined as follows (note that in general, given an
injective module $M$ and a submodule $N$ of $M$, there is an
injective hull of $N$ contained in $M$ but it is not necessarily
uniquely determined). For incidence/path coalgebras, the right
(left) injective hull $E_r(Kx)$ of $Kx$ (respectively, $E_l(Kx)$) of
the right (respectively, left) comodule $Kx$ is the span of all
segments/paths starting (respectively, ending) at $x$ (see the proof
of \cite[Proposition 2.5]{simson} for incidence coalgebras and
\cite[Corollary 6.2(b)]{chin} for path coalgebras). Then, for $x\leq
y$, from the incidence coalgebra results we get $E_r(Kx)\cap
E_l(Ky)=\langle e_{x,y} \rangle$ and from the path coalgebra we get
$E_r(Kx)\cap E_l(Ky)=\langle P(x,y)\rangle$. This shows that
$\langle P(x,y)\rangle$ is one dimensional, and the proof is
finished.
\end{proof}


Apart from the incidence coalgebra $KX$, there is another associated
algebraic object with a combinatorial relevance. This is the
incidence algebra $IA(X)$, which is the space of all functions
$f:\{(x,y)|x,y\in X, x\leq y\}\rightarrow K$ (functions on the set
of closed intervals of $X$),
with multiplication given by convolution: $(fg)(x,y)=\sum_{x\leq
z\leq y}f(x,z)g(z,y)$ for any $f,g\in IA(X)$ and any $x,y\in X,
x\leq y$. See \cite{so} for details on the combinatorial relevance
of the incidence algebra. It is clear that $IA(X)$ is isomorphic to
the dual algebra of $KX$, if we identify a map $f\in IA(X)$ with the
element of $(KX)^*$ which takes $e_{x,y}$ to $f(x,y)$ for any $x
\leq y$. For simplicity, we will identify $IA(X)$ with $(KX)^*$.

Comparing to path coalgebras and quiver algebras, the situation is
different, since the incidence algebra always has identity. However,
we can consider the subspace $FIA(X)$ of $IA(X)$ spanned by all the
elements $E_{x,y}$ with $x\leq y$, where
$E_{x,y}(e_{u,v})=\delta_{x,u}\delta_{y,v}$ for any $u\leq v$.
Equivalently, $FIA(X)$ consists of all functions on $\{(x,y)|x,y\in
X, x\leq y\}$ that have finite support. Then $FIA(X)$ is a subalgebra
of $IA(X)$ which does not have an identity when $X$ is infinite, but
it has enough idempotents, the set of all $E_{x,x}$. The algebra
$FIA(X)$ plays the role of the quiver algebra in this new framework.

The subspace $FIA(X)$ is dense in $IA(X)$ in the finite topology,
since it is easy to see that $FIA(X)^\perp =0$ (see \cite[Corollary
1.2.9]{DNR}). We have a coalgebra morphism $\theta:KX\rightarrow
FIA(X)^0$, defined by $\theta(c)(c^*)=c^*(c)$ for any $c\in KX$ and
any $c^*\in FIA(X)$. We note that $\theta(c)$ indeed lies in
$FIA(X)^0$, since $\Ker (\theta (c))=\langle c\rangle ^\perp \cap
FIA(X)\supseteq D^\perp \cap FIA(X)$, where $D$ is the (finite
dimensional) subcoalgebra generated by $c$ in $KX$. Then $D^\perp$
is an ideal of $IA(X)$ of finite codimension, and then $D^\perp \cap
FIA(X)$ is an ideal of $FIA(X)$ of finite codimension. Since
$FIA(X)$ is dense in $IA(X)$, $\theta$ is injective. The next result
shows that we can recover the incidence coalgebra $KX$ as the finite
dual of the algebra with enough idempotents $FIA(X)$. The result
parallels Theorem \ref{t.pathalgco}; note that the conditions
analogous to the ones in $(i)$ in Theorem \ref{t.pathalgco} are
always satisfied in incidence algebras.

\begin{theorem}\label{t.4.2}
For any locally finite partially ordered set $X$, the map
$\theta:KX\rightarrow FIA(X)^0$ is an isomorphism of coalgebras.
\end{theorem}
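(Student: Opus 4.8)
The plan is to show that $\theta$ is surjective; injectivity is already established in the paragraph preceding the statement. By Proposition \ref{p.qalco}'s analogue in this setting (or by a direct argument), every element of $FIA(X)^0$ is a linear functional on $FIA(X)$ whose kernel contains a cofinite ideal, so the core of the proof is to show that any such functional is a (finite) linear combination of the $e_{x,y}$'s, viewed inside $FIA(X)^*$ via $\theta$. So let $f\in FIA(X)^0$ and let $I$ be a cofinite ideal of $FIA(X)$ with $I\subseteq\Ker(f)$. I would apply Lemma \ref{lemaidealcofinit} to the algebra $FIA(X)$ with its complete system of orthogonal idempotents $(E_{x,x})_{x\in X}$: the set $S'=\{x\in X\mid E_{x,x}\notin I\}$ is finite. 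Write $S=X\setminus S'=\{x\mid E_{x,x}\in I\}$.

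The key combinatorial observation — the analogue of "paths through a vertex in $S$ lie in $I$" from the proof of Theorem \ref{t.pathalgco} — is that if $x\leq y$ and \emph{either} $x\in S$ \emph{or} $y\in S$, then $E_{x,y}\in I$. Indeed, one checks directly from the convolution product that $E_{x,x}E_{x,y}=E_{x,y}=E_{x,y}E_{y,y}$, so $E_{x,y}$ is absorbed into $I$ by left multiplication by $E_{x,x}$ or right multiplication by $E_{y,y}$. Consequently $f(E_{x,y})=0$ whenever $x\in S$ or $y\in S$, and the only $E_{x,y}$ on which $f$ can be nonzero are those with $x,y\in S'$, of which there are finitely many since $S'$ is finite. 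I would then set $g=\sum_{x\leq y,\ x,y\in S'}f(E_{x,y})\,\theta(e_{x,y})\in\theta(KX)$ and verify that $f$ and $g$ agree on every basis element $E_{u,v}$: if $u\in S$ or $v\in S$ both sides vanish (for $g$, because $\theta(e_{x,y})(E_{u,v})=\delta_{x,u}\delta_{y,v}$ forces $x=u\in S'$, $y=v\in S'$, a contradiction), and if $u,v\in S'$ then $g(E_{u,v})=f(E_{u,v})$ directly. Since $(E_{u,v})_{u\leq v}$ spans $FIA(X)$, this gives $f=g\in\theta(KX)$, proving surjectivity.

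I expect the only genuinely substantive point to be the absorption identity $E_{x,x}E_{x,y}=E_{x,y}=E_{x,y}E_{y,y}$ and its consequence that $E_{x,y}\in I$ once one endpoint's idempotent is in $I$; everything else is bookkeeping with the pairing $\theta(e_{x,y})(E_{u,v})=\delta_{x,u}\delta_{y,v}$ and an appeal to Lemma \ref{lemaidealcofinit}. One should also note that $\theta$ is a coalgebra map (already observed before the statement), so once it is shown to be a bijective coalgebra morphism it is automatically a coalgebra isomorphism. Unlike Theorem \ref{t.pathalgco}, there is no combinatorial side condition to impose here, precisely because in a locally finite poset the interval $[x,y]$ is finite, so the "finitely many paths through a finite vertex set" phenomenon of Proposition \ref{p.comb} holds automatically — this is the remark already made just before the theorem.
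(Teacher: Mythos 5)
Your proposal is correct and follows essentially the same route as the paper's proof: apply Lemma \ref{lemaidealcofinit} to the idempotents $E_{x,x}$, use the absorption identities $E_{x,x}E_{x,y}=E_{x,y}=E_{x,y}E_{y,y}$ to conclude that only finitely many $E_{x,y}$ lie outside $I$, and then exhibit $f$ as the corresponding finite linear combination of the $\theta(e_{x,y})$ by checking agreement on the basis $(E_{u,v})_{u\leq v}$. The only cosmetic difference is that you sum over all pairs in $S'\times S'$ rather than only over those with $E_{x,y}\notin I$, which changes nothing since the extra coefficients vanish.
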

\begin{proof}
It is enough to show that $\theta$ is surjective. Let $F\in
FIA(X)^0$, so $F:FIA(X)\rightarrow K$ and $Ker(F)$ contains an ideal
$I$ of $FIA(X)$ of finite codimension. Then the set $X_0=\{ x\in
X|E_{x,x}\notin I\}$ is finite by Lemma \ref{lemaidealcofinit}.

If $x\in X\setminus X_0$, then $E_{x,y}=E_{x,x}E_{x,y}\in I$ for any
$x\leq y$. Similarly $E_{x,y}\in I$ for any $y\in X\setminus X_0$
and  $x\leq y$. Thus in order to have $E_{x,y}\notin I$, both $x$
and $y$ must lie in $X_0$. This shows that only finitely many
$E_{x,y}$'s lie outside $I$.
 Let $\mathcal F$ be the set of all pairs $(x,y)$
such that $E_{x,y}\notin I$. Then we have that $F=\sum_{(x,y)\in
{\mathcal F}}F(E_{x,y})\theta(e_{x,y})$. Indeed, when evaluated at
$E_{u,v}$, both sides are 0 if $(u,v)\notin {\mathcal F}$, or
$F(E_{u,v})$ if $(u,v)\in {\mathcal F}$. Thus $F\in Im(\theta)$.
\end{proof}

The next result and its proof parallel Theorem
\ref{thpathsemiperfect}.

\begin{theorem} \label{thincidencesemiperfect}
Let $C=KX$. The following assertions are equivalent.\\
(i) $FIA(X)=C^{*rat}_l$.\\
(ii) $FIA(X)=C^{*rat}_r$.\\
(iii) For any $x\in X$ there are finitely many elements $u\in X$
such that $u\leq x$, and finitely many elements $y\in X$ such that
$x\leq y$.\\
(iv) $KX$ is a left and right semiperfect coalgebra.
\end{theorem}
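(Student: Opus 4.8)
The plan is to mirror the proof of Theorem~\ref{thpathsemiperfect} step by step, replacing paths by the intervals $e_{x,y}$ and the map $\psi$ by the dense inclusion $FIA(X)\subseteq IA(X)=(KX)^*=C^*$. Everything will rest on two elementary convolution identities, read off directly from $\Delta(e_{x,y})=\sum_{x\le z\le y}e_{x,z}\otimes e_{z,y}$: for $f\in C^*$ and $x\le y$,
$$(f\cdot E_{x,y})(e_{s,t})=\left\{\begin{array}{ll} f(e_{s,x}), & \mbox{if }t=y\mbox{ and }s\le x,\\ 0, & \mbox{otherwise,}\end{array}\right.\qquad (E_{x,y}\cdot f)(e_{s,t})=\left\{\begin{array}{ll} f(e_{y,t}), & \mbox{if }s=x\mbox{ and }y\le t,\\ 0, & \mbox{otherwise.}\end{array}\right.$$
I would record these first; they govern all the $C^*$-module actions that appear below.

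For $(iii)\Rightarrow(i)$ I would argue both inclusions. From the first identity, $f\cdot E_{x,y}=\sum_{s\le x}f(e_{s,x})\,E_{s,y}$, a \emph{finite} sum by $(iii)$, and since the $E_{s,y}$ are independent of $f$ this exhibits $E_{x,y}$ as a rational element, so $FIA(X)\subseteq C^{*rat}_l$. Conversely, given $c^*\in C^{*rat}_l$, write $d^*c^*=\sum_i d^*(c_i)c^*_i$ for all $d^*\in C^*$ and let $S$ be the finite set of basis elements $e_{a,b}$ occurring in the $c_i$. Whenever $e_{v,v}\notin S$ one gets $E_{v,v}c^*=\sum_i E_{v,v}(c_i)c^*_i=0$, and the \emph{second} identity with $x=y=v$ forces $c^*(e_{v,t})=0$ for all $t\ge v$; hence $c^*$ is supported on $\{e_{v,t}\mid v\in V_1,\ t\ge v\}$, where $V_1=\{v\mid e_{v,v}\in S\}$ is finite. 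By $(iii)$ each up-set $\{t\mid t\ge v\}$ is finite, so $c^*$ has finite support, i.e.\ $c^*\in FIA(X)$.

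For $(i)\Rightarrow(iii)$ I would apply $(i)$ to $E_{v,v}\in FIA(X)=C^{*rat}_l$: if $\{s\mid s\le v\}$ were infinite, pick $s_0\le v$ with $e_{s_0,v}$ not occurring in any $c_i$ of a rational expression for $E_{v,v}$, and $f\in C^*$ nonzero on $e_{s_0,v}$ but vanishing on all $c_i$; the first identity gives $(f\cdot E_{v,v})(e_{s_0,v})=f(e_{s_0,v})\neq 0$, contradicting $f\cdot E_{v,v}=\sum_i f(c_i)c^*_i$, so all down-sets are finite. Then, assuming some up-set $\{y\mid y\ge v\}$ is infinite, let $c^*\in C^*$ be $1$ on each $e_{v,y}$ ($y\ge v$) and $0$ elsewhere, so $c^*\notin FIA(X)$; writing $\{s\mid s\le v\}=\{s_1,\dots,s_k\}$ (finite by the previous step), a direct computation gives $f\cdot c^*=\sum_j f(e_{s_j,v})\,d^*_j$, where $d^*_j$ is $1$ on each $e_{s_j,t}$ with $t\ge v$ and $0$ otherwise, so $c^*\in C^{*rat}_l\setminus FIA(X)$, contradicting $(i)$. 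Hence all up-sets are finite and $(iii)$ holds. The equivalence $(ii)\Leftrightarrow(iii)$ is the mirror argument, swapping the two identities, since $(iii)$ is symmetric in up-sets and down-sets.

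Finally, for $(iii)\Leftrightarrow(iv)$: since $FIA(X)$ is dense in $C^*$, conditions $(i)$ and $(ii)$ make $C^{*rat}_l$ and $C^{*rat}_r$ dense in $C^*$, so $(iii)\Rightarrow(iv)$; for the converse I would invoke the standard criterion that a coalgebra is right (resp.\ left) semiperfect exactly when the injective hull of every simple right (resp.\ left) comodule is finite dimensional, together with the description recalled before Proposition~\ref{p.IPcoalg} of these injective hulls for $KX$ as the spans of the segments starting (resp.\ ending) at a fixed point --- finite-dimensionality of all of them is precisely $(iii)$. I expect the only delicate point to be the ordering of steps in $(i)\Rightarrow(iii)$: the finiteness of down-sets must be proved first, because the formula exhibiting the infinite-support functional $c^*$ as a rational element relies on it.
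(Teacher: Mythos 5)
Your proposal is correct and follows essentially the same route as the paper: the same convolution formulas, the same choice of test functionals $E_{u_0,x}$ and of the infinitely supported rational element $c^*$, and the same ordering of steps (down-sets before up-sets) in $(i)\Rightarrow(iii)$; the paper simply cites a lemma of Simson for $(iii)\Leftrightarrow(iv)$ where you argue directly via injective hulls. One small caution there: the standard criterion pairs \emph{right} semiperfectness with finite-dimensionality of injective hulls of simple \emph{left} comodules (and vice versa), not same-sided as you wrote, though since both $(iii)$ and $(iv)$ are left-right symmetric this does not affect the equivalence.
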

\begin{proof}
$(i)\Rightarrow (iii)$ Since $E_{x,x}\in C^{*rat}_l$, there exist
finite families $(c_i)_i$ in $C$ and $(c^*_i)_i$ in $C^*$ such that
$c^*E_{x,x}=\sum_ic^*(c_i)c^*_i$ for any $c^*\in C^*$. If there are
infinitely many elements $u$ in $X$ such that $u\leq x$, then we can
choose such an element $u_0$ for which $e_{u_0,x}$ does not show up
in the representation of any $c_i$ (as a linear
combination of the standard basis of $C$). 
Since $E_{u_0,x}(e_{p,q})=\delta_{u_0,p}\delta_{x,q}$, we get
$E_{u_0,x}(c_i)=0$ for any $i$, so $\sum_iE_{u_0,x}(c_i)c^*_i=0$,
while $(E_{u_0,x}E_{x,x})(e_{u_0,x})=1$, a contradiction.

Assume now that for some $x\in X$ the set of all elements $y$ with
$x\leq y$, say $(y_i)_i$, is infinite. Let $c^*\in C^*$ which is 1
on each $e_{x,y_i}$ and 0 on any other $e_{p,q}$. Then it is easy to
see that
$$
(d^*c^*)(e_{u,v})=\left\{
\begin{array}{l}
d^*(e_{u,x}),\mbox{ if } u\leq x\leq v, \mbox{ and } v\in\{y_i|i\}\\
0, \mbox{ otherwise }
\end{array}\right.
$$
Let $(u_j)_j$ be the family of all elements $u$ with $u\leq x$. As
we proved above, this family is finite. For each $j$, let $c^*_j\in
C^*$ equal 1 on every $e_{u_j,y_i}$, and 0 on any other $e_{p,q}$.
We have that $d^*c^*=\sum_jd^*(e_{u_j,x})c^*_j$ for any $d^*\in
C^*$. Indeed, using the formula above we see that both sides equal
$d^*(e_{u_{j_0},x})$ when evaluated at some $e_{u_{j_0},y_i}$, and 0
when evaluated at any other $e_{p,q}$.

Therefore $c^*\in C^{*rat}_l$, but obviously $c^*\notin FIA(X)$,
since it is non-zero on infinitely many $e_{p,q}$'s. \\
$(iii)\Rightarrow (i)$ Choose some $x,y$ with $x\leq y$. Then for
any $c^*\in C^*$ we have that
$$
(c^*E_{x,y})(e_{u,v})=\left\{
\begin{array}{l}
c^*(e_{u,x}),\mbox{ if } u\leq x \leq y=v\\
0, \mbox{ otherwise }
\end{array}\right.
$$
This shows that if $(u_j)_j$ is the finite family of all elements
$u$ with $u\leq x$, then $c^*E_{x,y}=\sum_jc^*(e_{u_j,x})E_{u_j,y}$,
so $E_{x,y}$ lies in $C^{*rat}_l$.

Now let $c^*\in C^{*rat}_l$, so $d^*c^*=\sum_id^*(c_i)c^*_i$ for
some finite families $(c_i)_i$ in $C$ and $(c^*_i)_i$ in $C^*$. If
$x\in X$ such that $e_{x,x}$ does not appear in any $c_i$ (with
non-zero coefficient), then $E_{x,x}c^*=0$. In particular
$0=(E_{x,x}c^*)(e_{x,y})=c^*(e_{x,y})$ for any $x\leq y$. Since only
finitely many $e_{u,u}$ appear in the representations of the
$c_i$'s, and for any such $u$ there are finitely many $v$ with
$u\leq v$, we obtain that $c^*(e_{u,v})$ is non-zero for only
finitely many $e_{u,v}$. So $c^*$ lies in the span of all
$E_{x,y}$'s,
which is $FIA(X)$. \\
$(ii)\Leftrightarrow (iii)$ is similar.\\
$(iii)\Leftrightarrow (iv)$ follows from \cite[Lemma 5.1]{simson}.
\end{proof}

\section{Coreflexivity for path subcoalgebras and subcoalgebras of incidence
coalgebras}\label{s5}

We recall from \cite{Rad,T1} that a coalgebra $C$ is called
coreflexive if any finite dimensional left (or equivalently, any
finite dimensional right) $C^*$-module is rational. This is also
equivalent to asking that the natural embedding of $C$ into the
finite dual of $C^*$, $C\rightarrow (C^*)^0$ is surjective (so an
isomorphism), or that any left (equivalently, any right) cofinite
ideal is closed in the finite topology. See \cite{R,Rad,T1,T2} for
further equivalent characterizations.

Given the definition of coreflexivity and the characterizations of
the previous section, it is natural to ask what is the connection
between the situation when the path coalgebra can be recovered as
the finite dual of the quiver algebra, and the coreflexivity of the
path coalgebra. We note that these two are closely related. We have
an embedding $\iota:K\Gamma\hookrightarrow (K\Gamma)^*{}^0$; at the
same time, we note that the embedding of algebras (without identity)
$\psi:K[\Gamma]\hookrightarrow (K\Gamma)^*$ which is dense in the
finite topology of $(K\Gamma)^*$, produces a comultiplicative
morphism  $\varphi:(K\Gamma)^*{}^0\rightarrow K[\Gamma]^0$. Note
that $\varphi$ is not necessarily a morphism of coalgebras, since it
may not respect the counits. It is easy to see that these canonical
morphisms are compatible with $\theta$, i.e., they satisfy
$\theta=\varphi\circ\iota$:
$$\xymatrix{
K\Gamma \ar[r]_{\hookrightarrow}^\iota \ar[dr]_{\theta} & (K\Gamma)^*{}^0 \ar[d]^\varphi\\
 & K[\Gamma]^0
}$$ It is then natural to ask what is the connection between the
bijectivity of $\theta$, and coreflexivity of $K\Gamma$, i.e.,
bijectivity of $\iota$. In fact, we notice that if $C$ is
coreflexive (equivalently, $\iota$ is surjective), then $\varphi$ is
necessarily injective.

The following two examples will show that, in fact, $C$ can be
coreflexive and $\theta$ not an isomorphism, and also that $\theta$
can be an isomorphism without $C$ being coreflexive.

\begin{example}
Consider the path coalgebra of the following quiver $\Gamma$:
$$\xygraph{ 
!~:{@{-}|@{>}}
a(:@/^1pc/[urr]{b_1}^{x_{11}}:@/^1pc/[drr]{c}^{y_{11}}"a",
:@/^.5pc/[rr]{b_2}^{x_{21}}:@/^.5pc/[rr]{c}^{y_{21}}"a",
:@/^0pc/[rr]{b_2}_{x_{22}}:@/^0pc/[rr]{c}_{y_{22}}[dll]{\dots}"a",
:@/^1pc/[ddrr]{b_n}^{x_{n1}}_\dots:@/^1pc/[uurr]{c}^{y_{n1}}_\dots
"a",:@/^-.5pc/[ddrr]{b_n}_{x_{nn}}^\dots:@/^-.5pc/[uurr]{c}_{y_{nn}}^\dots
[dddlll]{\vdots}[r]{\vdots}[r]{\vdots}
)}$$ Here there are $n$ arrows from vertex $a$ to vertex $b_n$ and
$n$ arrows from $b_n$ to $c$ for each positive integer $n$. We note
that the one-dimensional vector space $I$ spanned by $a-c$ is a
coideal, since $a-c$ is an $(a,c)$-skew-primitive element. It is not
difficult to observe that the quotient coalgebra $C/I$ is isomorphic
to the coalgebra from \cite[Example 3.4]{R}, and so $C/I$ is not
coreflexive as showed in \cite{R}. By \cite[3.1.4]{HR}, we know that
if $I$ is a finite dimensional coideal of a coalgebra $C$ then $C$
is coreflexive if and only if $C/I$ is coreflexive. Therefore, $C$
is not coreflexive. However, it is obvious that $C$ satisfies the
quiver conditions of Theorem \ref{t.pathalgco}, and therefore,
$K\Gamma=K[\Gamma]^0$.
\end{example}

Hence, a path coalgebra of a quiver with no cycles and finitely many
arrows between any two vertices is not necessarily coreflexive.
Conversely, we note that in a coreflexive path coalgebra there are
only finitely many arrows between any two vertices. This is true
since a coreflexive coalgebra is locally finite by \cite[3.2.4]{HR},
which means that the wedge $X\wedge Y=\Delta^{-1}(X\otimes
C+C\otimes Y)$ of any two finite dimensional vector subspaces $X,Y$
of $C$ is finite dimensional (one applies this for $X=Ka$ and
$Y=Kb$). However, if  a path coalgebra $K\Gamma$ is coreflexive,
$\Gamma$ may contain cycles: consider the path coalgebra $C$ of a
loop (a graph with one vertex and one arrow);
$C$ is then the divided power coalgebra, $C^*=K[[X]]$, the ring of
formal power series, and its only ideals are $(X^n)$, which are
closed in the finite topology of $C^*$. Thus, every finite
dimensional $C^*$-module is rational and $C$ is coreflexive.

We will prove coreflexivity of an interesting class of path
coalgebras, whose quiver satisfy a slightly stronger condition than
that required by Theorem \ref{t.pathalgco} (so in particular, they
will satisfy $K\Gamma=K[\Gamma]^0$). We first prove a general
coreflexivity criterion.

\begin{theorem} \label{thgeneralcoreflexive}
Let $C$ be a coalgebra with the property that for any finite
dimensional subcoalgebra $V$ there exists a finite dimensional
subcoalgebra $W$ such that $V\subseteq W$ and
$W^{\perp}W^{\perp}=W^\perp$. Then $C$ is coreflexive if and only if
its coradical $C_0$ is coreflexive.
\end{theorem}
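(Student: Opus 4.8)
The plan is to treat the two implications separately; only the ``if'' direction uses the standing hypothesis. For the ``only if'' direction I would simply use that a subcoalgebra of a coreflexive coalgebra is coreflexive, applied to $C_0\subseteq C$: a cofinite two-sided ideal $\bar L$ of $C_0^{*}\cong C^{*}/C_0^{\perp}$ is the image of the cofinite ideal $L\subseteq C^{*}$ which is its preimage; $L$ is closed in the finite topology because $C$ is coreflexive, and since the quotient topology on $C^{*}/C_0^{\perp}$ agrees with the finite topology of $C_0^{*}$, the image $\bar L$ is closed as well, so $C_0$ is coreflexive. No use of the hypothesis is needed here.

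For the ``if'' direction, assume $C_0$ is coreflexive. It suffices to show that every cofinite two-sided ideal $L$ of $C^{*}$ is closed in the finite topology, for this forces $C\to(C^{*})^{0}$ to be surjective. First I would use coreflexivity of $C_0$ to control the closure $\overline{L}$: the ideal $L+C_0^{\perp}$ is again cofinite, and its image in $C_0^{*}=C^{*}/C_0^{\perp}$ is a cofinite ideal of $C_0^{*}$, hence closed since $C_0$ is coreflexive; transporting back along the continuous open quotient map $q_0\colon C^{*}\to C^{*}/C_0^{\perp}$ (whose quotient topology is the finite topology of $C_0^{*}$) shows $L+C_0^{\perp}$ is closed in $C^{*}$, so $\overline{L}\subseteq L+C_0^{\perp}$. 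Since $C_0^{\perp}=J(C^{*})$ is the Jacobson radical and $C^{*}/L$ is finite dimensional, the image of $\overline{L}$ under $q\colon C^{*}\to C^{*}/L$ lies inside the image of $J(C^{*})$, hence inside $J(C^{*}/L)$, which is nilpotent.

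Now I bring in the hypothesis. Put $V=\{c\in C:\ f(c)=0\ \text{for all}\ f\in L\}$; this is a finite-dimensional subcoalgebra of $C$, and by assumption there is a finite-dimensional subcoalgebra $W\supseteq V$ with $W^{\perp}W^{\perp}=W^{\perp}$. Since $V\subseteq W$ we get $W^{\perp}\subseteq V^{\perp}=\overline{L}$, so $q(W^{\perp})$ is a two-sided ideal of $C^{*}/L$ which is idempotent (because $q(W^{\perp})q(W^{\perp})=q(W^{\perp}W^{\perp})=q(W^{\perp})$) and is contained in $J(C^{*}/L)$; an idempotent ideal sitting inside a nilpotent ideal is zero, so $W^{\perp}\subseteq L$. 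Finally, $W^{\perp}$ is cofinite, so $C^{*}/W^{\perp}\cong W^{*}$ is finite dimensional and $L/W^{\perp}$ is a subspace of it, hence closed; as $L$ is the preimage of $L/W^{\perp}$ under the continuous map $C^{*}\to C^{*}/W^{\perp}$, it is closed. Hence $C$ is coreflexive.

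The background facts I would invoke without proof are the standard ones: coreflexivity of a coalgebra $D$ is equivalent to the closedness in the finite topology of all cofinite (two-sided, equivalently left) ideals of $D^{*}$; for a subcoalgebra $D\subseteq C$ one has $D^{*}\cong C^{*}/D^{\perp}$ with the quotient topology equal to the finite topology of $D^{*}$; and $J(C^{*})=C_0^{\perp}$. The step I expect to be the real crux — and the one where the hypothesis does essential work — is the passage from $W^{\perp}\subseteq\overline{L}$ to $W^{\perp}\subseteq L$: this is exactly where idempotency of $W^{\perp}$ is played off against the nilpotence of $J(C^{*}/L)$, the latter resting on $C_0$ being coreflexive. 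It is worth keeping the literal equality $W^{\perp}W^{\perp}=W^{\perp}$ (rather than only the weaker $W\wedge W=W$) in this argument, since that is what makes $q(W^{\perp})$ genuinely idempotent with no further topological fuss.
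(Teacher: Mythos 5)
Your proof is correct, but it follows a genuinely different route from the paper's. The paper proves coreflexivity by showing every finite dimensional left $C^*$-module $M$ is rational, by induction on the composition length: the base case uses $C^*/J(C^*)\simeq C_0^*$ together with coreflexivity of $C_0$, and the inductive step picks a simple submodule $M'$ with quotient $M''$, writes ${\rm ann}_{C^*}(M')=U_1^\perp$ and ${\rm ann}_{C^*}(M'')=U_2^\perp$ for finite dimensional subcoalgebras $U_1,U_2$, applies the hypothesis to $V=U_1+U_2$, and concludes $W^\perp=W^\perp W^\perp\subseteq U_1^\perp U_2^\perp\subseteq {\rm ann}_{C^*}(M)$, so the annihilator is closed and cofinite. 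You instead work directly with a cofinite two-sided ideal $L$, take $V=L^\perp$ (so $V^\perp=\overline{L}$), and replace the induction by the nilpotency of $J(C^*/L)$: since $\overline{L}\subseteq L+C_0^\perp=L+J(C^*)$ (using coreflexivity of $C_0$ once), the idempotent ideal $q(W^\perp)$ sits inside the nilpotent radical of the finite dimensional algebra $C^*/L$ and must vanish, giving $W^\perp\subseteq L$. The underlying mechanism is the same in both arguments --- coreflexivity of $C_0$ disposes of the semisimple part, and the literal idempotency $W^\perp W^\perp=W^\perp$ absorbs the radical part --- but your version trades the induction on length for the single algebraic fact that an idempotent ideal inside a nilpotent ideal is zero, which arguably isolates the role of the hypothesis more cleanly; the paper's version stays entirely in the language of rational modules and only ever needs to multiply two annihilators, avoiding any explicit appeal to closures or to $J(C^*)=C_0^\perp$ beyond the base case. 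All the background facts you invoke (closedness of cofinite ideals as a criterion for coreflexivity, $D^*\cong C^*/D^\perp$ compatibly with the finite topologies, $J(C^*)=C_0^\perp$, and $(L^\perp)^\perp=\overline{L}$) are standard and available in the paper's references.
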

\begin{proof}
If $C$ is coreflexive, then so is $C_0$, since a subcoalgebra of a
coreflexive coalgebra is  coreflexive (see \cite[Proposition
3.1.4]{HR}). Conversely, let $C_0$ be coreflexive. We prove that any
finite dimensional left $C^*$-module $M$ is rational, by induction
on the length $l(M)$ of $M$. If $l(M)=1$, i.e., $M$ is simple, then
$M$ is also a left $C^*/J(C^*)$-module, where $J(C^*)$ is the
Jacobson radical of $C^*$. Since $C^*/J(C^*)\simeq C_0^*$ and $C_0$
is coreflexive, we have that $M$ is a rational $C_0^*$-module, so
then it is a rational
$C^*$-module, too. \\
Assume now that the statement is true for length $<n$, where $n>1$,
and let $M$ be a left $C^*$-module of length $n$. Let $M'$ be a
simple submodule of $M$, and consider the associated exact sequence
$$0\rightarrow M'\rightarrow M \rightarrow M''\rightarrow 0$$
By the induction hypothesis $M'$ and $M''$ are rational. By
\cite[Theorem 2.2.14]{DNR} we have that $ann_{C^*}(M')$ and
$ann_{C^*}(M'')$ are finite codimensional closed two-sided ideals in
$C^*$. Using \cite[Corollary 1.2.8 and Proposition 1.5.23]{DNR},
 $ann_{C^*}(M')=U_1^\perp$ and
$ann_{C^*}(M'')=U_2^\perp$ for some finite dimensional subcoalgebras
of $C$. Using the hypothesis for $V=U_1+U_2$, there is a finite
dimensional subcoalgebra $W$ of $C$ such that $U_1\subseteq W$,
$U_2\subseteq W$ and $W^{\perp}W^{\perp}=W^\perp$. Then by
\cite[Proposition 1.5.23]{DNR}, $W^\perp=W^{\perp}W^{\perp}\subseteq
U_1^{\perp}U_2^{\perp}=ann_{C^*}(M')ann_{C^*}(M'')\subseteq
ann_{C^*}(M)$ is a two-sided closed ideal of $C^*$, of finite
codimension. Therefore, $M$ is a rational $C^*$-module by using
again \cite[Theorem 2.2.14]{DNR}.
\end{proof}

\begin{proposition} \label{lemaconditii}
Let $C$ be the path coalgebra $K\Gamma$, where $\Gamma$ is a quiver
such that there are finitely many paths between any two vertices.
Then for any finite dimensional subcoalgebra $V$ of $C$ there exists
a finite dimensional subcoalgebra $W$ such that $V\subseteq W$ and
$W^{\perp}W^{\perp}=W^\perp$. As a consequence, $C$ is coreflexive
if and only if the coradical $C_0$ (which is the grouplike coalgebra
over  the set of vertices of $\Gamma$) is coreflexive.
\end{proposition}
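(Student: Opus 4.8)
The ``as a consequence'' part will follow immediately from Theorem \ref{thgeneralcoreflexive} once the first assertion is proved: the coradical of a path coalgebra $K\Gamma$ is the span of the vertices, i.e. the grouplike coalgebra $K^{(\Gamma_0)}$ (see \cite{chin}), so $C=K\Gamma$ will satisfy the hypothesis of that theorem and hence be coreflexive if and only if $K^{(\Gamma_0)}$ is. Thus the real content is to produce, for each finite dimensional subcoalgebra $V\subseteq C$, a finite dimensional subcoalgebra $W$ with $V\subseteq W$ and $W^{\perp}W^{\perp}=W^{\perp}$. The plan is to take $W$ to be the span of \emph{all} the paths of $\Gamma$ that run entirely inside a suitable finite set of vertices.

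Concretely, let $E\subseteq\Gamma_0$ be the set of all vertices occurring in some path that appears with nonzero coefficient in some element of $V$; this is finite, and put $W=\langle\,\mathrm{Path}(E)\,\rangle$, where $\mathrm{Path}(E)$ denotes the set of all paths all of whose vertices lie in $E$. Three quick checks are in order. First, $W$ is finite dimensional: every path in $\mathrm{Path}(E)$ runs between two vertices of $E$, and by hypothesis there are only finitely many paths between any two vertices. Second, $W$ is a subcoalgebra: $\mathrm{Path}(E)$ is closed under passing to subpaths, so $\Delta(p)=\sum_{qr=p}q\otimes r\in W\otimes W$ for every $p\in\mathrm{Path}(E)$. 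Third, $V\subseteq W$ by the very choice of $E$. It then remains to establish $W^{\perp}W^{\perp}=W^{\perp}$; by \cite[Proposition 1.5.23]{DNR} one has $W^{\perp}W^{\perp}=(W\wedge W)^{\perp}$, where $W\wedge W=\Delta^{-1}(W\otimes C+C\otimes W)$, so it suffices to prove $W\wedge W=W$.

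The inclusion $W\subseteq W\wedge W$ is automatic. For the reverse one, the key combinatorial observation is: if $p=v_0\to v_1\to\cdots\to v_n$ is a path possessing a vertex $v_j\notin E$, then splitting $p$ at position $j$ exhibits $p$ as a product of two subpaths, each of which contains $v_j$ and hence lies outside $W$; therefore the corresponding term of $\Delta(p)$ lies outside $W\otimes C+C\otimes W$, and $p\notin W\wedge W$. Equivalently, a path can lie in $W\wedge W$ only if all of its vertices belong to $E$, i.e. only if it already lies in $W$. To pass from paths to arbitrary elements, I would use the length grading $K\Gamma=\bigoplus_n(K\Gamma)_n$: $\Delta$ is graded, $W$ is a graded subcoalgebra, and for $0\le i\le n$ the component $\Delta_i:(K\Gamma)_n\to(K\Gamma)_i\otimes(K\Gamma)_{n-i}$, which sends a path to (its initial segment of length $i$)$\otimes$(its final segment of length $n-i$), is injective, since concatenation recovers the path. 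Hence no cancellation can occur, and $x\in W\wedge W$ forces every path occurring in every homogeneous component of $x$ to satisfy the splitting condition above, hence to lie in $W$; so $x\in W$, and $W\wedge W=W$ as wanted.

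I expect the main obstacle to be exactly this last reduction --- checking carefully that membership of a general element of $C$ in $W\wedge W$ really does come down, via the grading and the injectivity of the maps $\Delta_i$, to the combinatorial statement about individual paths, so that linear combinations cannot smuggle extra elements into the wedge. The combinatorial statement itself, once isolated, is short (split at a vertex outside $E$), and the remaining manipulations with orthogonals and the wedge are routine given the cited results.
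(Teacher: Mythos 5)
Your choice of $W$, the check that it is a finite dimensional subcoalgebra containing $V$, and the combinatorial computation of the wedge are all fine (the cancellation worry is indeed harmless: distinct pairs $(q,r)$ of paths give distinct basis tensors $q\otimes r$, so the coefficient of $q\otimes r$ in $\Delta(x)$ is exactly the coefficient of the path $qr$ in $x$, and one gets $W\wedge W=W$ without even invoking the grading). The genuine gap is the step ``$W^{\perp}W^{\perp}=(W\wedge W)^{\perp}$, so it suffices to prove $W\wedge W=W$.'' What holds in general is only $W^{\perp}W^{\perp}\subseteq (W\wedge W)^{\perp}$ together with $W\wedge W=(W^{\perp}W^{\perp})^{\perp}$; taking orthogonals once more gives $(W\wedge W)^{\perp}=\overline{W^{\perp}W^{\perp}}$, the \emph{closure} of $W^{\perp}W^{\perp}$ in the finite topology of $C^{*}$. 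Thus $W\wedge W=W$ only shows that $W^{\perp}W^{\perp}$ is dense in $W^{\perp}$, not equal to it, and a product of closed cofinite ideals need not be closed; whether $(U\wedge U')^{\perp}$ equals $U^{\perp}U'^{\perp}$ for finite dimensional subcoalgebras is precisely the kind of non-automatic condition that coreflexivity questions hinge on. The exact equality is what is needed, since Theorem \ref{thgeneralcoreflexive} uses $W^{\perp}=W^{\perp}W^{\perp}\subseteq ann_{C^*}(M')ann_{C^*}(M'')$ as an honest product of ideals annihilating $M$.

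The paper closes exactly this gap by an explicit construction: given $\eta\in W^{\perp}$ it produces $f_1,g_1,f_2,g_2\in W^{\perp}$ with $\eta=f_1g_1+f_2g_2$, defining the four functionals on paths by induction on length and solving, at each path $p$ of positive length, the linear condition (\ref{eq1}) in which $g_1(p)$ and $f_2(p)$ can be chosen because one of $f_1(s(p))$, $g_2(t(p))$ equals $1$. For that induction to close, the paper does not use your $W$ (span of paths with all vertices in $E=S_0$) but first enlarges $S_0$ to a path-convex set $S$ (any path between two vertices of $S$ has all its vertices in $S$) and takes $W$ to be the span of all paths starting and ending in $S$. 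This enlargement is not cosmetic: with your $E$, a path $p$ whose endpoints lie in $E$ but which passes through a vertex outside $E$ has $f_i(s(p))=g_i(t(p))=0$, so the analogue of (\ref{eq1}) has no free unknowns on its left-hand side and becomes a constraint that need not be satisfiable. So you need both to replace $E$ by its path-convex closure and to supply the inductive factorization of $\eta$; the wedge computation alone does not deliver $W^{\perp}W^{\perp}=W^{\perp}$.
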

\begin{proof}
Let $V$ be a finite dimensional subcoalgebra of $C=K\Gamma$. An
element $c\in V$ is of the form
$c=\sum\limits_{i=1}^{n}\alpha_ip_i,\alpha_i\neq 0$, a linear
combination of paths $p_1,\ldots ,p_n$. Consider the set of all
vertices at least one of these paths passes through, and let $S_0$
be the union of all these sets of vertices when $c$ runs through the
elements of $V$. Since $V$ is finite dimensional, we have that $S_0$
is finite (in fact, one can see that $S_0$ consists of all vertices
in $\Gamma$ which belong to $V$, so that $KS_0$ is the socle of
$V$). Let $P$ be the set of all paths $p$ such that $s(p),t(p)\in
S_0$. We consider the set $S$ of all vertices at least one path of
$P$ passes through. It is clear that $ P$ is finite, and then so is
$S$. We note that if $v_1,v_2\in S$ and $p$ is a path from $v_1$ to
$v_2$, then any vertex on $p$ lies in $S$. Indeed, $v_1$ is on a
path from $u_1$ to $u_1'$ (vertices in $S_0$), and let $p_1$ be its
subpath from $u_1$ to $v_1$. Similarly, $v_2$ is on a path from
$u_2$ to $u_2'$ (in $S_0$), and let $p_2$ be the subpath from $v_2$
to $u_2'$. Then $p_1pp_2\in{P}$, so any vertex of $p$ is in $S$. Let
$W$ be the subspace spanned by all paths starting and ending at
vertices in $S$. It is clear that any subpath of a path in $W$ is
also in $W$, so then $W$ is a finite dimensional subcoalgebra
containing $V$ (since $S_0$ is contained in $S$).\\
 We show that $W^{\perp}W^{\perp}=W^\perp$. For
this, given $\eta \in W^{\perp}$, we construct $f_1,f_2,g_1,g_2\in
W^{\perp}$ such that $\eta=f_1g_1+f_2g_2$. We define $f_i(p)$ and
$g_i(p)$, $i=1,2$, on all paths $p$  by induction on the length of
$p$. For paths $p$ of length zero, i.e., if $p$ is a vertex $v$, we
define $f_i(v)=g_i(v)=0$, $i=1,2$, for any $v\in S$, while for
$v\notin S$, we set $f_1(v)=g_2(v)=1$, and $f_2(v)$ and $g_2(v)$ are
such that $g_1(v)+f_2(v)=\eta (v)$. Then clearly
$\eta=f_1g_1+f_2g_2$ on paths of length zero. For the induction
step, assume that we have defined $f_i$ and $g_i$, $i=1,2$, on all
paths of length $<l$, and that $\eta=f_1g_1+f_2g_2$ on any such
path. Let now $p$ be a path of length $l$, starting at $u$ and
ending at $v$. If $u,v\in S$, then we define $f_i(p)=g_i(p)=0$,
$i=1,2$, and clearly $\eta (p)=\sum_{i=1,2}\sum_{qr=p}f_i(q)g_i(r)$,
since both sides are zero. If either $u\notin S$ or $v\notin S$, we
need the following equality to hold.
\begin{equation} \label{eq1}
f_1(u)g_1(p)+f_1(p)g_1(v)+f_2(u)g_2(p)+f_2(p)g_2(v)=\eta(p)-\sum_{i=1,2}\sum_{qr=p\atop
{q\neq p,r\neq p}}f_i(q)g_i(r)
\end{equation}
We note that the terms of the right-hand side of the equality
(\ref{eq1}) have already been defined, because when $p=qr$ and
$q\neq p$, $r\neq p$, the length of the paths $q$ and $r$ is
strictly less than the length of $p$. We define $f_1(p)$ and
$g_2(p)$ to be any elements of $K$, and then since either $f_1(u)=1$
or $g_2(v)=1$ (since either $u\notin S$ or $v\notin S$), we can
choose suitable $g_1(p)$ and $f_2(p)$ such that (\ref{eq1}) holds
true.\\ The fact that $C$ is coreflexive if and only if so is $C_0$
follows now follows directly from Theorem
\ref{thgeneralcoreflexive}.
\end{proof}

Moreover, we can extend the result in the previous proposition to
subcoalgebras of path coalgebras.

\begin{proposition}\label{c.ref}
Let $C$ be a subcoalgebra of a path coalgebra $K\Gamma$, such that
there are only finitely many paths between any two vertices in
$\Gamma$ . Then $C$ is coreflexive if and only if $C_0$ is
coreflexive.
\end{proposition}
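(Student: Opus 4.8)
The plan is to reduce the statement about a subcoalgebra $C \subseteq K\Gamma$ to the case of the full path coalgebra, which was handled in Proposition \ref{lemaconditii}, by verifying that $C$ inherits the hypothesis of Theorem \ref{thgeneralcoreflexive}. More precisely, I would first invoke the structural fact that the coradical of $C$ is a subcoalgebra of the coradical $(K\Gamma)_0$, which is the grouplike coalgebra on the vertex set $\Gamma_0$; hence $C_0 = KS_1$ for some subset $S_1 \subseteq \Gamma_0$. If $C$ is coreflexive, then $C_0$ is coreflexive because coreflexivity passes to subcoalgebras (\cite[Proposition 3.1.4]{HR}), so only the converse requires work.

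For the converse, assume $C_0$ is coreflexive. The key step is to show that $C$ satisfies the hypothesis of Theorem \ref{thgeneralcoreflexive}: for every finite dimensional subcoalgebra $V$ of $C$, there is a finite dimensional subcoalgebra $W$ of $C$ with $V \subseteq W$ and $W^\perp W^\perp = W^\perp$, where now $W^\perp$ is taken inside $C^*$. Here I would exploit the construction already carried out in Proposition \ref{lemaconditii}: given $V$, view it as a finite dimensional subcoalgebra of $K\Gamma$ and produce, exactly as there, the finite set $S$ of vertices and the finite dimensional subcoalgebra $\widetilde{W}$ of $K\Gamma$ spanned by all paths with source and target in $S$. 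Then set $W = \widetilde{W} \cap C$, which is a finite dimensional subcoalgebra of $C$ containing $V$. The functions $f_i, g_i \in \widetilde{W}^{\perp}$ constructed in the proof of Proposition \ref{lemaconditii} restrict to elements of $W^{\perp}$ inside $C^*$ (restriction of functionals along $C \hookrightarrow K\Gamma$ sends $\widetilde{W}^\perp$ into $W^\perp$ and is compatible with the convolution products, since $C \hookrightarrow K\Gamma$ is a coalgebra map and hence $C^* \twoheadleftarrow (K\Gamma)^*$ respects multiplication); thus for any $\eta \in W^\perp \subseteq C^*$ we first lift $\eta$ to some $\tilde\eta \in \widetilde{W}^\perp \subseteq (K\Gamma)^*$ (using that $C^* = (K\Gamma)^*/C^{\perp}$ and that $\widetilde{W}^{\perp}$ surjects onto $W^\perp$, which follows from $\widetilde{W}^{\perp} + C^{\perp} \supseteq W^{\perp}$-type reasoning via the finite topology), write $\tilde\eta = f_1 g_1 + f_2 g_2$, and restrict back down to get $\eta = \bar f_1 \bar g_1 + \bar f_2 \bar g_2$ in $W^\perp W^\perp$. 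Once the hypothesis of Theorem \ref{thgeneralcoreflexive} is verified for $C$, that theorem immediately yields that $C$ is coreflexive if and only if $C_0$ is.

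The main obstacle I anticipate is the bookkeeping around restriction and lifting of functionals between $(K\Gamma)^*$ and $C^*$: one must check carefully that the subcoalgebra $W = \widetilde W \cap C$ still contains $V$ (which is clear since $V \subseteq \widetilde W$ and $V \subseteq C$), that $\widetilde{W}^{\perp}$ maps onto $W^{\perp}$ under the canonical surjection $(K\Gamma)^* \to C^*$, and that the convolution structure is preserved so that a factorization $\tilde\eta = f_1 g_1 + f_2 g_2$ in $(K\Gamma)^*$ descends to the desired factorization in $C^*$. Alternatively, and perhaps more cleanly, one can rerun the explicit inductive construction of $f_i, g_i$ from Proposition \ref{lemaconditii} directly on a basis of paths of $C$ adapted to $W$, noting that all one ever uses is that $W$ is spanned by paths closed under subpaths and that the "boundary" vertices outside $S$ are available to absorb the correction terms; this avoids the lifting issue entirely. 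Either way, the heart of the matter has already been done in Proposition \ref{lemaconditii}, and the present proof is essentially a localization of that argument to the subcoalgebra $C$ together with an appeal to Theorem \ref{thgeneralcoreflexive}.
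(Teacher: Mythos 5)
Your proof is correct, but it takes a genuinely different route from the paper's. The paper does not verify the hypothesis of Theorem \ref{thgeneralcoreflexive} for $C$ itself; instead it replaces $\Gamma$ by the subquiver $\Gamma'$ spanned by the vertices through which elements of $C$ pass, proves the nontrivial fact that $C_0=(K\Gamma')_0$ (extracting each vertex $u$ of $\Gamma'$ as a grouplike of $C$ via the bicomodule action $r^*cq^*$ together with a maximal-length argument), applies Proposition \ref{lemaconditii} to conclude that $K\Gamma'$ is coreflexive, and finishes using the heredity of coreflexivity under subcoalgebras. You instead keep the ambient coalgebra fixed and transport the idempotency of $\widetilde{W}^\perp$ down to $W^\perp$ for $W=\widetilde{W}\cap C$: beyond Proposition \ref{lemaconditii}, all you need is that the restriction map $(K\Gamma)^*\to C^*$ is an algebra surjection carrying $\widetilde{W}^\perp$ onto $W^\perp$, and that last point is pure linear algebra --- $(\widetilde{W}\cap C)^\perp=\widetilde{W}^\perp+C^\perp$ holds for arbitrary subspaces of any vector space, so the appeal to the finite topology is unnecessary. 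Both arguments are complete; yours avoids the subquiver construction and the $C_0=(K\Gamma')_0$ verification, while the paper's avoids any lifting of functionals and instead exhibits $C$ explicitly as a subcoalgebra of a coreflexive path coalgebra. One caveat: your suggested ``cleaner alternative'' of rerunning the inductive construction on ``a basis of paths of $C$'' does not work as stated, since a subcoalgebra of a path coalgebra need not be spanned by paths; the lifting argument is the one to keep.
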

\begin{proof}
Let $\Gamma'$ be the subquiver of $\Gamma$ whose vertices are all
the vertices $v$ of $\Gamma$ such that there is an element
$c=\sum_i\alpha_ip_i\in C$, where the $\alpha_i$'s are nonzero
scalars and the $p_i$'s are pairwise distinct paths, and at least
one $p_i$ passes through $v$. The arrows of $\Gamma'$ are all the
arrows of $\Gamma$ between vertices of $\Gamma'$. Clearly, there are
only finitely many paths between any two vertices in $\Gamma'$. Then
we have that $C$ is a subcoalgebra of $K\Gamma'$ and
$C_0=(K\Gamma')_0$. Obviously, $C_0\subset (K\Gamma')_0$; for the
converse, let us consider a vertex $u$ in $\Gamma'$, so there is
$c\in C$ such that $c=\sum\limits_i\alpha_ip_i$, with $\alpha_i\neq
0$ and distinct $p_i$'s, and some $p_k$ passes through $u$. Let us
write then $p_k=qr$ such that $q$ ends at $u$ and $r$ begins at $u$.
Since $C$ is a subcoalgebra of $K\Gamma'$ it is also a
sub-bicomodule, so then $r^*cq^*\in C$, where $q^*,r^*\in
(K\Gamma')^*$ are equal to $1$ on $q,r$ respectively and $0$ on all
other paths of $K\Gamma'$. Now
$$r^*p_iq^*=\sum\limits_{p_i=stw}q^*(s)tr^*(w)$$ and the only non-zero terms
can occur if $p_i=qt_ir$, where $t_i$ is a path starting and ending
at $u$ (loop at $u$). Let $J$ be the set of these indices. In this
situation $r^*p_iq^*=t_i$. Note that since the $p_i$'s are distinct,
the $t_j$'s, $j\in J$ are distinct too. Also, since $p_k=qr$, there
is at least such a $j$. We have $r^*cq^*=\sum\limits_j\alpha_jt_j$,
with all $t_j$ beginning and ending at $u$, and $t_k=u$. Let $l\in
J$ be an index such that $t_l$ has maximum length among the $t_j$'s,
$j\in J$. We note then that $t_l^*t_j=0$ if $j\neq l$, since for any
decomposition $t_j=st$, we have $t\neq t_l$ because of the
maximality of $t_l$ and of the fact that $t_j\neq t_l$. However,
$t_l^*t_l=u$. Therefore, $t_l^*c=\alpha_lu\in C$, so $u\in C$
since $\alpha_l\neq 0$. \\
Thus if $C_0$ is coreflexive, we have that $(K\Gamma')_0$ is
coreflexive, and then by Lemma \ref{lemaconditii}, we have that
$K\Gamma'$ is coreflexive. Then $C$ is coreflexive, as a
subcoalgebra of  $K\Gamma'$. Conversely, if $C$ is coreflexive, then
clearly $C_0$ is coreflexive.
\end{proof}

\begin{corollary}\label{c.incoref}
Let $C$  be a subcoalgebra of an incidence coalgebra $KX$. Then $C$
is coreflexive if and only if $C_0$ is coreflexive.
\end{corollary}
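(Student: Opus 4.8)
The plan is to reduce Corollary \ref{c.incoref} to Proposition \ref{c.ref}. The key observation is that an incidence coalgebra $KX$ is itself (isomorphic to) a subcoalgebra of a path coalgebra $K\Gamma$ for a quiver $\Gamma$ with the property that there are only finitely many paths between any two vertices. Indeed, this is precisely the content of the discussion preceding Proposition \ref{p.IPcoalg}: given a locally finite poset $(X,\leq)$, one forms the Hasse quiver $\Gamma$ whose vertices are the elements of $X$ and whose arrows are the covering relations, and the map $\phi:KX\to K\Gamma$, $\phi(e_{x,y})=\sum_{s(p)=x,\,t(p)=y}p$, is an injective coalgebra morphism. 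So $KX$ embeds as a subcoalgebra of $K\Gamma$.

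The remaining point is that this quiver $\Gamma$ has finitely many paths between any two vertices. This follows from local finiteness of $X$: any path in $\Gamma$ from $x$ to $y$ must pass through vertices lying in the interval $\{z \mid x \leq z \leq y\}$, which is finite, and a path through a finite vertex set with no repeated vertices (paths in the Hasse quiver cannot repeat a vertex, since $\Gamma$ has no oriented cycles — the covering relations are strictly increasing) is bounded in length and hence finitely many such paths exist. Alternatively, one can invoke Proposition \ref{p.comb}: the Hasse quiver has no oriented cycles and finitely many arrows between any two vertices (at most one, in fact), so between any two vertices there are finitely many paths.

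Now the argument concludes as follows. First I would note that $C$, being a subcoalgebra of $KX$, is via $\phi$ a subcoalgebra of $K\Gamma$. Since $\Gamma$ has finitely many paths between any two vertices, Proposition \ref{c.ref} applies directly to $C$ viewed inside $K\Gamma$: $C$ is coreflexive if and only if $C_0$ is coreflexive. That is the entire proof. The only thing to be careful about is that the coradical $C_0$ computed inside $KX$ agrees with the one computed inside $K\Gamma$, but the coradical is an intrinsic invariant of the coalgebra $C$ (the sum of its simple subcoalgebras), so there is no ambiguity.

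I do not expect any genuine obstacle here; the statement is a straightforward specialization of Proposition \ref{c.ref}. The only mild subtlety is making explicit that the Hasse quiver of a locally finite poset satisfies the finiteness-of-paths hypothesis, which as noted follows immediately from local finiteness (every path from $x$ to $y$ stays within the finite interval $[x,y]$ and cannot repeat vertices). Hence the proof will be just a couple of sentences invoking the embedding $\phi$ and Proposition \ref{c.ref}.
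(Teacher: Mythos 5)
Your proof is correct and follows exactly the paper's own argument: embed $KX$ into the path coalgebra of its Hasse quiver $\Gamma$ via $\phi$, observe that local finiteness of $X$ forces finitely many paths between any two vertices of $\Gamma$, and apply Proposition \ref{c.ref} to $C$ viewed as a subcoalgebra of $K\Gamma$. Your extra justification of the finiteness-of-paths condition (paths from $x$ to $y$ stay in the finite interval $[x,y]$ and cannot repeat vertices) is a detail the paper leaves implicit, but it is the same proof.
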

\begin{proof}
As explained in Section \ref{sectincidence}, $KX$ can be embedded in
a path coalgebra $K\Gamma$, where $\Gamma$ is a quiver for which
there are finitely many paths between any two vertices. Then $C$ is
isomorphic to a subcoalgebra of  $K\Gamma$ and we apply Proposition
\ref{c.ref}.
\end{proof}

Recall that for a path coalgebra or incidence coalgebra $C$,
$C_0\sim K^{(X)}$, where $X$ is the set of grouplike elements in
$C$. At this point, we believe it is worth mentioning that by
\cite[Theorem 3.7.3]{HR}, $K^{(X)}$ is coreflexive whenever $X$ is a
nonmeasurable cardinal. More precisely, an ultrafilter ${\mathcal
F}$ on a set $X$ is called an {\it Ulam} ultrafilter if ${\mathcal
F}$ is closed under countable intersection. $X$ is called {\it
nonmeasurable} (or {\it reasonable} in the language of \cite{HR}) if
every Ulam ultrafilter is principal (i.e., it equals the collection
of all subsets of $X$ containing some fixed $x\in X$). The class of
nonmeasurable sets contains the countable sets and is closed under
usual set-theoretic constructions, such as the power set, subsets,
products, and unions. If a non-reasonable (i.e. measurable) set
exists, its cardinality has to be ``very large" (i.e., inaccessible
in the sense of set theory).

\vspace{.4cm}

We now give an example to show that it is possible to have a
coalgebra which is both coreflexive, and satisfies the path
coalgebra ``recovery" conditions of Theorem \ref{t.pathalgco};
however, in its quiver, some vertices are joined by infinitely many
paths. Thus, in general, the coreflexivity question for path
coalgebras is more complicated.

\begin{example}
Consider the path coalgebra $C$ of the following quiver $\Gamma$:
$$\xygraph{ 
!~:{@{-}|@{>}}
a(:@/^0pc/[urr]{b_1}^{x_{1}}:@/^0pc/[drr]{c}^{y_{1}}"a",
:@/^0pc/[rr]{b_2}_{x_{2}}:@/^0pc/[rr]{c}_{y_{2}}[dll]{\dots}"a",
:@/^0pc/[ddrr]{b_n}^{x_{n}}_\dots:@/^0pc/[uurr]{c}^{y_{n}}_\dots
"a",
[dddr]{\vdots}[r]{\vdots}[r]{\vdots}
 )}$$ Here there are infinitely many vertices $b_n$, one for each positive integer $n$.
 Let $W_n$ be the
finite dimensional subcoalgebra of $C$ with basis
$$B=\{a,c,b_1,\dots,b_n,x_1,\dots,x_n,y_1,\dots,y_n,x_1y_1,\dots,x_ny_n\}.$$
We show that $W_n^\perp=W_n^\perp\cdot W_n^\perp$. Let $f\in
W_n^\perp$. We show that we can find $g_1,g_2,h_1,h_2\in W_n^\perp$
such that $f=g_1h_1+g_2h_2$. This condition is already true on
elements of $B$ if we set $g_1,g_2,h_1,h_2$ to be zero on $W_n$. For
$k>n$ we define $g_1,g_2,h_1,h_2$ on $x_k,y_k$ and $x_ky_k$ such
that
\begin{eqnarray*}
f(x_ky_k) & = & \sum\limits_{i=1,2}(g_i(a)h_i(x_ky_k)+g_i(x_k)h_i(y_k)+g_i(x_ky_k)h_i(c))\\
f(x_k) & = & \sum\limits_{i=1,2}(g_i(a)h_i(x_k)+g_i(x_k)h_i(b_k))\\
f(y_k) & = & \sum\limits_{i=1,2}(g_i(b_k)h_i(y_k)+g_i(y_k)h_i(c))\\
f(b_k) & = & \sum\limits_{i=1,2}g_i(b_k)h_i(b_k)\\
\end{eqnarray*}
and since $g_i(a)=h_i(a)=g_i(c)=h_i(c)=0$ this is equivalent to the
matrix equality
$$
\left(\begin{array}{cc}
    f(b_k) & f(y_k) \\
    f(x_k) & f(x_ky_k)
\end{array}\right)
= \left(\begin{array}{c} g_1(b_k)\\ g_1(x_k) \end{array}\right)
\cdot \left(\begin{array}{cc} h_1(b_k) & h_1(y_k)
\end{array}\right)+ \left(\begin{array}{c} g_2(b_k)\\ g_2(x_k)
\end{array}\right) \cdot \left(\begin{array}{cc} h_2(b_k) & h_2(y_k)
\end{array}\right)
$$
But it is a standard linear algebra fact that any arbitrary $2\times
2$ matrix can be written this way as a sum of two matrices of rank
1, and thus the claim is proved. Since every finite dimensional
subcoalgebra $V$ of $C$ is contained in some $W_n$ with
$W_n^\perp=W_n^\perp\cdot W_n^\perp$ and $C_0\cong K^{\ZZ_{> 0}}$ is
coreflexive, by Theorem \ref{thgeneralcoreflexive} we obtain that
$C$ is coreflexive.
\end{example}


\subsection*{Reflexivity for quiver and incidence algebras}

Recall from \cite{T1} that an algebra is called reflexive if the
natural (evaluation) map from $A$ to $A^{0*}$ is an isomorphism.
Using our construction in Section 2, we can extend this to algebras
with enough idempotents, and call such an algebra {\it reflexive} if
the map $\Phi:a\longmapsto (f\mapsto f(a))\in A^{0*}$ is an
isomorphism. We note that in general the coalgebra $A^0$ is a
coalgebra with counit, and therefore, $A^{0*}$ is an algebra with
unit. Hence, a reflexive algebra must be unital. Parallel to
algebras with unit we call an algebra {\it proper} if the map $\Phi$
is injective and we call $A$ {\it weakly reflexive} if $\Phi$ is
surjective. It is not difficult to see that an algebra is proper if
and only if the intersection of all cofinite ideals is $0$.

\begin{theorem}\label{t.5.7}
Let $\Gamma$ be a quiver. Then:\\
(i) The quiver algebra $K[\Gamma]$ is proper.\\
(ii) $K[\Gamma]$ is reflexive (equivalently, weakly reflexive) if
and only if it is finite dimensional, equivalently, $\Gamma$ has
finitely many vertices and arrows, and has no oriented cycles.
\end{theorem}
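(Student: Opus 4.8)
\textbf{Proof plan for Theorem \ref{t.5.7}.}

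The plan is to first dispose of part (i) and then reduce part (ii) to a dimension count. For (i), by the remark preceding the theorem, $K[\Gamma]$ is proper if and only if the intersection of all cofinite ideals of $K[\Gamma]$ is zero. I would exhibit, for each path $p$, a cofinite ideal $I_p$ not containing $p$: take $I_p$ to be the span of all paths $q$ which are \emph{not} subpaths of $p$. As noted already in the discussion of $\theta$ before Proposition \ref{p.qalco}, the set $S(p)$ of subpaths of $p$ is finite, so $I_p$ is cofinite, and it is an ideal because the set of non-subpaths of $p$ is closed under multiplication on either side by arbitrary paths (if $qr$ were a subpath of $p$ then so would be $q$ and $r$). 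Since $p\notin I_p$, an element $z=\sum_i\alpha_ip_i$ (with $\alpha_i\neq 0$, $p_i$ distinct) cannot lie in $\bigcap_q I_q$: choosing a $p_i$ of maximal length among those appearing, $z\notin I_{p_i}$ since $z$ has a nonzero $p_i$-coefficient while all paths in $I_{p_i}$ other than proper subpaths of $p_i$ are non-subpaths. Hence the intersection of all cofinite ideals is $0$ and $K[\Gamma]$ is proper.

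For (ii), the key observation is that a reflexive (or merely weakly reflexive) algebra with enough idempotents must be unital, because $A^{0*}$ is always unital (as $A^0$ has a counit, by the Proposition in Section \ref{sectdualfinit}); so $\Phi$ surjective forces $A$ to have a unit. By the discussion in Section \ref{sectquiverpath}, $K[\Gamma]$ has a unit exactly when $\Gamma_0$ is finite. So assume $\Gamma_0$ finite. Now I would argue that weak reflexivity further forces $K[\Gamma]$ finite dimensional: if $\Gamma$ had infinitely many arrows or an oriented cycle, then $K[\Gamma]$ is infinite dimensional, but $K[\Gamma]^0=\theta(K\Gamma)=K\Gamma$ need not hold in that case, so I cannot directly compare dimensions through $\theta$. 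Instead I would use properness together with the fact that a proper algebra $A$ with $A\cong A^{0*}$ would make $A$ a dual algebra $C^*$ for $C=A^0$; but dual algebras of coalgebras are complete in the finite topology, and I would show $K[\Gamma]$ with $\Gamma_0$ finite is not complete in the finite topology unless it is finite dimensional — concretely, when there are infinitely many arrows out of some vertex, or an oriented cycle through a vertex $v$, the infinite sum $\sum_n x_n$ (over arrows $x_n$, resp. $\sum_n q_{n}$ over the powers of a cycle) is a Cauchy net in $K[\Gamma]^{**}\supseteq K[\Gamma]^{0*}$ with no limit in $K[\Gamma]$, contradicting surjectivity of $\Phi$. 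Finally, if $\Gamma_0$ and $\Gamma_1$ are finite and $\Gamma$ has no oriented cycles, then $K[\Gamma]$ is finite dimensional, hence automatically reflexive (for a finite dimensional algebra $A$, $A^0=A^*$ and $\Phi$ is the canonical iso $A\cong A^{**}$), which closes the equivalence.

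The main obstacle I anticipate is the middle implication in (ii): showing that weak reflexivity (with $\Gamma_0$ finite) forces finite dimensionality. The cleanest route is probably the topological one sketched above — identifying $\im\Phi$ inside $K[\Gamma]^{0*}$ and observing that $K[\Gamma]^{0*}$ contains, as a subalgebra, the completion of $K[\Gamma]$ in the topology defined by cofinite ideals, which is strictly larger than $K[\Gamma]$ whenever $K[\Gamma]$ is infinite dimensional and has "enough" cofinite ideals (guaranteed here by properness). One subtlety to handle carefully is that $A^{0*}$ is in general only the \emph{profinite completion} of $A$ with respect to cofinite two-sided ideals, and one must check this completion is genuinely bigger — which is exactly where the explicit witnesses (an infinite family of parallel arrows, or an oriented cycle) are needed to produce an element of $A^{0*}\setminus\im\Phi$. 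Once that is in place, combined with (i) and the trivial finite dimensional case, all three conditions in (ii) are seen to be equivalent.
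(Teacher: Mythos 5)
Part (i) of your plan is correct and is exactly the paper's ``alternative'' argument for properness (the cofinite monomial ideals $I_p$ already intersect in zero), and reducing (ii) to the case $\Gamma_0$ finite is harmless (using injectivity from (i) so that a surjective $\Phi$ is an isomorphism onto the unital algebra $A^{0*}$). The genuine gap is in the key step of (ii): your proposed witnesses do not work. An element of $K[\Gamma]^{0*}\cong\varprojlim_I K[\Gamma]/I$ (limit over cofinite two-sided ideals) must have stabilizing partial sums modulo \emph{every} cofinite ideal; equivalently, $\sum_n f(x_n)$ must be a finite sum for every $f\in K[\Gamma]^0$. This fails for $\sum_n x_n$: the paper's own proof of $(iii)\Rightarrow(i)$ in Theorem \ref{t.pathalgco} constructs a cofinite ideal $I$ spanned by $\{x_n-x_0\mid n\ge 1\}$ together with all paths outside $\{a,b\}\cup\{x_n\}$, and modulo this $I$ all the $x_n$ are equal and nonzero, so the partial sums $\sum_{n\le N}x_n\equiv (N+1)x_0$ never stabilize; any $f\in K[\Gamma]^0$ vanishing on $I$ with $f(x_0)\neq 0$ gives a divergent $\sum_n f(x_n)$. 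The analogous ideal $E+H$ built from $\{q_{n,ks+i}-q_{n,i}\}$ kills the cycle witness in the same way. (Also, $K[\Gamma]^{0*}$ is a quotient of $K[\Gamma]^{**}$ under restriction, not a subspace of it, so the containment you invoke is backwards.) Thus the completion route is not closed as written: one would need a genuinely different argument that $\varprojlim_I K[\Gamma]/I$ is strictly larger than $K[\Gamma]$, e.g.\ a cardinality count via Erd\H{o}s--Kaplansky, rather than these explicit elements.

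The paper sidesteps all of this with a short argument worth adopting: if $\Phi:K[\Gamma]\to K[\Gamma]^{0*}$ is surjective, compose it with the surjective algebra map $K[\Gamma]^{0*}\twoheadrightarrow (K\Gamma)^*$ obtained by restricting functionals to the subcoalgebra $\theta(K\Gamma)\subseteq K[\Gamma]^0$; the composite is precisely $\psi:K[\Gamma]\to (K\Gamma)^*$, which is therefore surjective. But every element of $\im\psi$ has finite support as a function on the set of paths, while $(K\Gamma)^*$ contains the function equal to $1$ on every path; hence $\Gamma$ has only finitely many paths and $K[\Gamma]$ is finite dimensional. This requires neither the unitality reduction nor any completeness considerations, and the converse (finite dimensional $\Rightarrow$ reflexive) is immediate, as you note.
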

\begin{proof}
(i) follows since $K[\Gamma]$ embeds in $(K\Gamma)^*$ which is
proper by \cite[Proposition 3.1]{T1}, and one can easily see that
\cite[Proposition 3.4]{T1} stating that a subalgebra of a proper
algebra is proper can be extended to algebras with enough idempotents.
Alternatively, one can see that the
intersection of cofinite ideals of $K[\Gamma]$ is always $0$.\\
(ii) Assume $K[\Gamma]$ is weakly reflexive, so
$K[\Gamma]\rightarrow K[\Gamma]^{0*}$ is surjective. The inclusion
$K\Gamma\subseteq K[\Gamma]^0$ yields a surjective morphism of
algebras $K[\Gamma]^{0*}\rightarrow (K\Gamma)^*$. This shows that
the natural map $\psi:K[\Gamma]\hookrightarrow (K\Gamma)^*$ is
surjective (and, in fact, bijective). Consider the ``gamma function"
on $K[\Gamma]$, i.e. the function $\gamma\in K[\Gamma]$ equal to $1$
on all paths. Then $\gamma$ is in the image of $\psi$, and since
every function in the image of $\psi$ has finite support as a
function on the set of paths of $\Gamma$, it follows that there are
only finitely many paths in $\Gamma$. Therefore, $K[\Gamma]$ is
finite dimensional. The converse is obvious (as noticed before).
\end{proof}


In the case of incidence algebras, using \cite[Proposition 6.1]{T1}
which states that a coalgebra $C$ is coreflexive if and only if
$C^*$ is reflexive, and using also Corollary \ref{c.incoref}, we
immediately get the following

\begin{theorem}
Let $X$ be a locally finite partially ordered set. Then the following assertions
are equivalent:\\
(i) The incidence algebra $IA(X)$ of $X$ over $K$ is reflexive.\\
(ii) The incidence coalgebra $KX$ is coreflexive. \\
(iii) The coalgebra $(KX)_0=KX_0$ (the grouplike coalgebra on the elements of $X$) is coreflexive.\\
(iv) The algebra $K^X$ of functions on $X$ is reflexive.
\end{theorem}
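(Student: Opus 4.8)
The strategy is to establish the four-way equivalence by going around a cycle. The pieces are almost all supplied by the results already assembled in the paper, so the proof is mostly a matter of orchestration. First I would invoke the cited result of Taft, \cite[Proposition 6.1]{T1}, which gives that a coalgebra $C$ is coreflexive if and only if $C^*$ is reflexive, applied to $C=KX$; since $IA(X)\cong (KX)^*$ as algebras (as noted in Section \ref{sectincidence}), this is exactly the equivalence of $(i)$ and $(ii)$. Next, the equivalence of $(ii)$ and $(iii)$ is a direct application of Corollary \ref{c.incoref} with $C=KX$ (taking the subcoalgebra to be all of $KX$): $KX$ is coreflexive if and only if $(KX)_0$ is, and $(KX)_0$ is precisely the grouplike coalgebra $KX_0$ on the underlying set $X$, since the grouplike elements of an incidence coalgebra are exactly the $e_{x,x}$.

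For the equivalence with $(iv)$, I would observe that the algebra $K^X$ of all functions on $X$ is naturally identified with the dual of the grouplike coalgebra $KX_0$: indeed $KX_0=K^{(X)}$ has basis $\{e_{x,x}\}_{x\in X}$, and $(K^{(X)})^*\cong K^X$. Then, again by \cite[Proposition 6.1]{T1}, $K^X=(KX_0)^*$ is reflexive if and only if $KX_0$ is coreflexive, which is condition $(iii)$. Assembling: $(iv)\Leftrightarrow(iii)\Leftrightarrow(ii)\Leftrightarrow(i)$, and the theorem follows.

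**Main obstacle.** None of the steps involves a genuinely hard computation; the only point requiring a little care is verifying that the identifications of dual objects are the right ones — specifically that $(KX)^*\cong IA(X)$ and $(KX_0)^*\cong K^X$ as \emph{algebras}, and that the coradical $(KX)_0$ really is the grouplike part $KX_0$ rather than some larger subcoalgebra. The first identification is recorded explicitly in Section \ref{sectincidence}; the second is the standard fact that a pointed cocommutative-type coalgebra like $KX$ has coradical equal to the span of its grouplike elements, which here are the diagonal basis elements $e_{x,x}$ (each spanning a one-dimensional subcoalgebra, and their span being cosemisimple). Once these identifications are in place, the whole statement is a formal consequence of Taft's reflexivity–coreflexivity duality together with Corollary \ref{c.incoref}, so I would keep the written proof to just a few lines citing these two inputs.
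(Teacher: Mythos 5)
Your proposal is correct and follows exactly the route the paper intends: the paper derives this theorem ``immediately'' from Taft's result that $C$ is coreflexive if and only if $C^*$ is reflexive (applied to $KX$ with $(KX)^*\cong IA(X)$, and to $KX_0=K^{(X)}$ with dual $K^X$) together with Corollary \ref{c.incoref} for the equivalence of $(ii)$ and $(iii)$. Your added care about the algebra identifications and the identification of the coradical with the grouplike part is exactly the right thing to check and matches the paper's setup.
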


These yield as a corollary the algebra analogue of Proposition
\ref{p.IPcoalg}.

\begin{corollary}
Let $A$ be an algebra of a non-measurable cardinality. Then $A$ is
isomorphic both to a quiver algebra and to an incidence algebra if
and only if and only if it is the quiver algebra of a finite quiver
with no oriented cycles, equivalently, it is elementary, finite
dimensional and hereditary.
\end{corollary}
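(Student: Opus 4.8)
The plan is to reduce the algebra statement to the coalgebra statement of Proposition~\ref{p.IPcoalg} by linear duality; this reduction is only available once $A$ is known to be finite-dimensional, so the first and essentially only substantial point is to force finite-dimensionality, and this is exactly where the non-measurability hypothesis is consumed.

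For the nontrivial direction, suppose $A\cong K[\Gamma]\cong IA(X)=(KX)^{*}$ with $X$ a locally finite poset and the underlying set of $A$ non-measurable. The orthogonal idempotents $E_{x,x}$ embed $X$ into $A$, so $|X|\le |A|$ is non-measurable, subsets of non-measurable sets being non-measurable. By Corollary~\ref{c.incoref} applied to $C=KX$ itself, $KX$ is coreflexive iff $(KX)_{0}\cong K^{(X)}$ is coreflexive, and the latter holds by Heyneman--Radford \cite[Theorem 3.7.3]{HR} since $X$ is non-measurable. As coreflexivity of $KX$ is equivalent to reflexivity of $(KX)^{*}=IA(X)$, we get that $A$ is reflexive; since reflexivity depends only on the algebra structure, $K[\Gamma]$ is reflexive, and Theorem~\ref{t.5.7}(ii) then forces $K[\Gamma]$ to be finite-dimensional --- i.e.\ $\Gamma$ has finitely many vertices and arrows and no oriented cycles. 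Now dualize: being finite-dimensional, $A^{*}=K[\Gamma]^{*}=K[\Gamma]^{0}=K\Gamma$ by Theorem~\ref{t.pathalgco} (cf.\ the remark after it), while simultaneously $A^{*}=\bigl((KX)^{*}\bigr)^{*}=KX$. Hence the finite-dimensional coalgebra $K\Gamma\cong KX$ is at once a path coalgebra and an incidence coalgebra, so by Proposition~\ref{p.IPcoalg} its quiver $\Gamma$ (which is uniquely recovered from the path coalgebra) has at most one path between any two vertices. Thus $A$ is the quiver algebra of a finite quiver with no oriented cycles and at most one path between any two vertices.

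Conversely, if $A=K[\Gamma]$ with $\Gamma$ finite, acyclic and with at most one path between any two vertices, then $A$ is a quiver algebra by definition, it is finite-dimensional (a finite acyclic quiver has only finitely many paths), and $A^{*}=K\Gamma$ as above; Proposition~\ref{p.IPcoalg} identifies $K\Gamma$ with the incidence coalgebra $KX$ of the poset $X$ on the vertices of $\Gamma$ defined by $x\le y$ iff there is a path from $x$ to $y$, and dualizing once more gives $A\cong A^{**}\cong(KX)^{*}=IA(X)$, an incidence algebra. Finally, the equivalence with ``$A$ elementary, finite-dimensional and hereditary'' is the classical description of path algebras of finite acyclic quivers: $K[\Gamma]$ for such $\Gamma$ satisfies $K[\Gamma]/J(K[\Gamma])\cong K^{\Gamma_{0}}$, is finite-dimensional, and is hereditary; conversely a finite-dimensional elementary algebra is a quotient of the path algebra of its Gabriel quiver, hereditariness kills the relations, and finite-dimensionality makes that quiver acyclic.

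The only delicate step is obtaining finite-dimensionality: a priori a quiver algebra $K[\Gamma]$ may have oriented cycles or infinitely many arrows (e.g.\ $K[\Gamma]\cong K[X]$ for a single loop), so the mere facts that $A$ is both a quiver algebra and an incidence algebra do not obviously bound its dimension. The non-measurability hypothesis is used precisely and only to break this: on the incidence side it makes $A$ reflexive via the Heyneman--Radford theorem, after which Theorem~\ref{t.5.7} collapses $A$ to finite dimension and the rest is a routine application of Proposition~\ref{p.IPcoalg}.
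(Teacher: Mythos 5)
Your argument for the substantive direction is exactly the paper's: non-measurability of $X$ gives coreflexivity of $K^{(X)}$ by Heyneman--Radford, hence of $KX$ by Corollary \ref{c.incoref}, hence reflexivity of $IA(X)\cong A\cong K[\Gamma]$ via Taft's result, after which Theorem \ref{t.5.7}(ii) forces finite-dimensionality. The paper's proof is precisely this chain and then stops with an appeal to ``well known characterizations of finite dimensional quiver algebras''; you supply more detail, notably the justification that $X$ is non-measurable and the dualization step identifying $K\Gamma\cong KX$ so that Proposition \ref{p.IPcoalg} applies. One point worth flagging: in the converse you quietly add the hypothesis that $\Gamma$ has at most one path between any two vertices, which is absent from the corollary as stated. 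That extra hypothesis is genuinely needed: the Kronecker quiver $\bullet\rightrightarrows\bullet$ is finite and acyclic, and its path algebra is elementary, finite dimensional and hereditary, yet it is not an incidence algebra, since the Gabriel quiver of $IA(X)$ is the Hasse diagram of $X$ and so has no multiple arrows. Thus the literal ``if'' direction of the corollary fails without the one-path condition; this is an imprecision in the paper's statement (and in its one-line treatment of the converse) rather than a gap in your argument, and your version is the correct algebra analogue of Proposition \ref{p.IPcoalg}.
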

\begin{proof}
If $A\cong K[\Gamma]\cong IA(X)$ for a quiver $\Gamma$ and a locally
finite partially ordered set $X$, then $K^{(X)}$ is coreflexive by \cite{HR} since
$X$ is also non-measurable. Now $A\cong IA(X)$ is reflexive since
$K^X\cong (K^{(X)})^*$ is reflexive by \cite[Proposition 6.1]{T1}.
By Theorem \ref{t.5.7}, $A\cong K[\Gamma]$ must be finite
dimensional since it is reflexive.  The final statements follow from
the well known characterizations of finite dimensional quiver
algebras.
\end{proof}




\subsection*{An application}

We give now an application of our considerations on coreflexive
coalgebras. If $\Gamma,\Gamma'$ are quivers, then we consider the
quiver $\Gamma\times\Gamma'$ defined as follows. The vertices are
all pairs $(a,a')$ for $a,a'$ vertices in $\Gamma$ and $\Gamma'$
respectively. The arrows are the pairs $(a,x')$, which is an arrow
from $(a,a_1')$ to $(a,a_2')$, where $a$ is a vertex in $\Gamma$ and
$x'$ is an arrow from $a_1'$ to $a_2'$ in $\Gamma'$, and the pairs
$(x,a')$, which is an arrow from $(a_1,a')$ to $(a_2,a')$, where $x$
is an arrow from $a_1$ to $a_2$ in $\Gamma$, and $a'$ is a vertex in
$\Gamma'$. Let $p=x_1x_2\dots x_n$ be a path in $\Gamma$ going (in
order) through the vertices $a_0,a_1,\dots,a_n$ and $q=y_1y_2\dots
y_k$ be a path in $\Gamma'$ going through vertices
$b_0,b_1,\dots,b_k$ (some vertices may repeat). We consider the 2
dimensional lattice $L=\{0,\ldots ,n\}\times \{0,\ldots ,k\}$. A
lattice walk is a sequence of elements of $L$ starting with $(0,0)$
and ending with $(n,k)$, and always going either one step to the
right or one step upwards in $L$, i.e., $(i,j)$ is followed either
by $(i+1,j)$ or by $(i,j+1)$. There are ${n+k}\choose {k}$ such
walks.\\
To $p,q$ and a lattice walk
$(0,0)=(i_0,j_0),(i_1,j_1),\ldots,(i_{n+k},j_{n+k})=(n,k)$ in $L$ we
associate a path of length $n+k$ in $\Gamma\times \Gamma'$, starting
at $(a_0,b_0)$ and ending at $(a_n,b_k)$ such that the $r$-th arrow
of the path, from $(a_{i_{r-1}},b_{j_{r-1}})$ to $(a_{i_r},b_{j_r})$
is $(x_{r-1},b_{j_{r-1}})$ if $i_r=i_{r-1}+1$, and
$(a_{i_{r-1}},y_{r-1})$ if $j_r=j_{r-1}+1$. \\
Conversely, if $\gamma$ is a path in $\Gamma\times \Gamma'$, there
are (uniquely determined) paths $p$ in $\Gamma$ and $q$ in
$\Gamma'$, and a lattice walk such that $\gamma$ is associated to
$p,q$ and that lattice walk as above. Indeed, we take $p$ to be the
path in $\Gamma$ formed by considering the arrows $x$ such that
there are arrows of the form $(x,a')$ in $\gamma$, taken in the
order they appear in $\gamma$. Similarly, $q$ is formed by
considering the arrows of the form $(a,y)$ in $\gamma$. The lattice
walk is defined according to the succession of arrows in $\gamma$.\\
 For two such paths
$p,q$ let us denote $W(p,q)$ the set of all paths in $\Gamma\times
\Gamma'$ associated to $p$ and $q$ via lattice walks.

\vspace{.4cm}

{\bf Functoriality, (co)products of quivers and recovery problems}.
We note that if $\Gamma$ and $\Gamma'$ satisfy condition (i) in
Theorem 3.3 (i.e. if their path coalgebras can be recovered as
finite duals of the corresponding quiver algebras), then $\Gamma
\times \Gamma'$ satisfies this condition, too. Indeed, the
description of the arrows in $\Gamma\times \Gamma'$ shows that there
are finitely many arrows between any two vertices. Also, if an
oriented cycle existed in $\Gamma\times \Gamma'$, then it would
produce an oriented cycle in each of $\Gamma$ and $\Gamma'$.

Also, if $\Gamma$ and $\Gamma'$ satisfy condition (iii) in Theorem
\ref{thpathsemiperfect} (i.e. if their quiver algebras can be
recovered as the rational part of the dual of the corresponding path
coalgebras), then $\Gamma \times \Gamma'$ satisfies this condition,
too. Indeed, a path in $\Gamma \times \Gamma'$ starting at the
vertex $(a,a')$ is determined by a path in $\Gamma$ starting at $a$,
a path in $\Gamma'$ starting at $a'$ (and there are finitely many
such paths in both cases), and a lattice walk (chosen from a finite
family). These can be extended to finite products of quivers in the
obvious way.

Given a family of quivers $(\Gamma_i)_i$, one can consider the
coproduct quiver $\Gamma=\coprod\limits_i\Gamma_i$. The path
coalgebra functor commutes with coproducts and one has
$K\Gamma=\bigoplus\limits_i K\Gamma_i$. Also, the quiver algebra
functor from the category of quivers to the category of algebras
with enough idempotents has the property that
$K[\coprod\limits_i\Gamma_i]=\bigoplus\limits_i K[\Gamma_i]$. It is
clear that $\coprod\limits_i\Gamma_i$ satisfies the conditions of
Theorem \ref{t.pathalgco} (i) if and only if each $\Gamma_i$
satisfies the same condition, so each $K\Gamma_i$ can be recovered
from $K[\Gamma_i]$ if and only if $K\Gamma$ is recoverable from
$K[\Gamma]$. Also, each of the quivers $(\Gamma_i)_i$ satisfies
condition (iii) in Theorem \ref{thpathsemiperfect}, if and only if
so does their disjoint union $\coprod\limits_i\Gamma_i$. In
coalgebra terms, this is justified by the fact that a direct sum
$\bigoplus\limits_iC_i$ of coalgebras is semiperfect if and only if
each $C_i$ is semiperfect.

\vspace{.4cm}

Returning to coreflexivity problems, we need the following

\begin{lemma}\label{prodquiver}
The linear map $\alpha:K\Gamma\otimes K\Gamma'\hookrightarrow
K(\Gamma\times\Gamma')$ defined as $\alpha(p\otimes
q)=\sum\limits_{w\in W(p,q)}w$ for $p\in\Gamma$, $q\in\Gamma'$
paths, is an injective morphism of $K$-coalgebras.
\end{lemma}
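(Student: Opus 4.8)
The plan is to verify directly that $\alpha$ is coassociative, counital, and injective, using the combinatorial bijection between paths in $\Gamma\times\Gamma'$ and triples (path in $\Gamma$, path in $\Gamma'$, lattice walk) that was established in the preamble. First I would check injectivity: if $p,q$ and $p_1,q_1$ are distinct pairs of paths, then by the uniqueness part of the bijection the sets $W(p,q)$ and $W(p_1,q_1)$ are disjoint subsets of the basis of $K(\Gamma\times\Gamma')$, so the images $\alpha(p\otimes q)$ are supported on disjoint basis sets and hence the $\alpha(p\otimes q)$ are linearly independent; since $\{p\otimes q\}$ is a basis of $K\Gamma\otimes K\Gamma'$, $\alpha$ is injective. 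The counit property is immediate: $\alpha(p\otimes q)$ is a sum of paths of length $|p|+|q|$, so $\epsilon(\alpha(p\otimes q)) = 1$ exactly when $|p|=|q|=0$, i.e.\ when $p$ and $q$ are vertices, which matches $(\epsilon\otimes\epsilon)(p\otimes q)$.

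\textbf{Comultiplicativity.} The heart of the argument is to show $\Delta_{\Gamma\times\Gamma'}\circ\alpha = (\alpha\otimes\alpha)\circ\Delta_{K\Gamma\otimes K\Gamma'}$. On a basis element $p\otimes q$, the right-hand side is $\sum_{p=p'p''}\sum_{q=q'q''} \alpha(p'\otimes q')\otimes\alpha(p''\otimes q'')$, a sum over pairs of decompositions and then over lattice walks for $(p',q')$ and $(p'',q'')$ separately. The left-hand side is $\sum_{w\in W(p,q)}\sum_{w=w'w''} w'\otimes w''$. The key bijection to exhibit is between (a) triples consisting of a decomposition $p=p'p''$, a decomposition $q=q'q''$, a lattice walk for $(p',q')$ in the $|p'|\times|q'|$ lattice and a lattice walk for $(p'',q'')$ in the $|p''|\times|q''|$ lattice; and (b) pairs consisting of a lattice walk $w$ for $(p,q)$ in the $|p|\times|q|$ lattice together with a point $(i,j)$ on that walk at which we split it. Given a walk $w$ for $(p,q)$ and a split point $(i,j)$: the prefix of $w$ up to $(i,j)$ is a walk in the sublattice $\{0,\dots,i\}\times\{0,\dots,j\}$ and determines the decomposition $p=p'p''$ with $|p'|=i$, $q=q'q''$ with $|q'|=j$; the suffix, translated by $(-i,-j)$, is a walk for $(p'',q'')$. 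One then checks that under the path-assignment rule from the preamble, the path $w'$ associated to the prefix walk equals the element of $W(p',q')$ associated to it, and similarly $w''\in W(p'',q'')$; this is exactly the compatibility of the arrow-labeling rule with restriction to a sublattice, which is routine from the definitions. Summing, both sides agree.

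\textbf{Main obstacle.} The only genuinely delicate point is bookkeeping in the comultiplicativity check: one must be careful that when $w = w'w''$ and $w'$ is associated to $(p',q')$ via the prefix walk, the endpoint $(a_i, b_j)$ of $w'$ is precisely the common vertex $t(p') = s(p'')$ in $\Gamma$ paired with $t(q')=s(q'')$ in $\Gamma'$, so that $w'$ and $w''$ really are composable in $\Gamma\times\Gamma'$ and $w'w''=w$. This follows because the walk passes through lattice point $(i,j)$ exactly when $w'$ has used the first $i$ arrows of $p$ and the first $j$ arrows of $q$, and the path-assignment rule reads off the current vertex as $(a_i,b_j)$. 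Granting the bijection of the preamble (which identifies paths in $\Gamma\times\Gamma'$ with such triples), this is a direct unwinding of definitions rather than a substantive difficulty; I would present it as a short verification on basis elements and leave the remaining identities to the reader.
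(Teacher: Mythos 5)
Your proposal is correct and follows essentially the same route as the paper: the comultiplicativity check rests on the same bijection between splittings $w=w'w''$ of a walk-path $w\in W(p,q)$ and quadruples $(p=p'p'',\,q=q'q'',\,u\in W(p',q'),\,v\in W(p'',q''))$, and your injectivity argument via disjoint supports uses exactly the fact $W(p,q)\cap W(p_1,q_1)=\emptyset$ that the paper exploits (there via an explicit dual functional picking out the ``all of $p$, then all of $q$'' walk). No gaps.
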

\begin{proof}
We keep the notations above. Denote $\delta$ and $\Delta$ the
comultiplications of $K\Gamma\otimes K\Gamma'$ and
$K(\Gamma\times\Gamma')$. We have that
\begin{eqnarray*}
 \delta\alpha (p\otimes
q)&=&\sum_{w\in W(p,q)}\sum_{w'w''=w}w'\otimes w''\\
(\alpha \otimes \alpha)\Delta (p\otimes
q)&=&\sum_{p'p''=p}\sum_{q'q''=q}\sum_{u\in W(p',q')\atop v\in
W(p'',q'')}u\otimes v \end{eqnarray*}

On one hand, if $p=p'p''$, $q=q'q''$, $u\in W(p',q')$ and $v\in
W(p'',q'')$, we have that $uv\in W(p,q)$. On the other hand, if
$w\in W(p,q)$ and $w=w'w''$, then there exist $p'p''$ in $\Gamma$
and $q',q''$ in $\Gamma'$ such that $p=p'p''$, $q=q'q''$, $w'\in
W(p',q')$ and $w''\in W(p'',q'')$. These show that $\delta\alpha
(p\otimes q)=(\alpha \otimes \alpha)\Delta (p\otimes q)$, i.e.,
$\alpha$ is a morphism of coalgebras (the compatibility with counits
is easily
verified). \\
To prove injectivity, if $p=x_1x_2\ldots x_n$ is a path in $\Gamma$
starting at $a_0$ and ending at $a_n$, and $q=y_1y_2\ldots y_k$ is a
path in $\Gamma'$ starting at $b_0$ and ending at $b_k$, we denote
by $(p^*,q^*)$ the linear map on $K(\Gamma\times\Gamma')$ which
equals $1$ on the path
$(x_1,b_0),\dots,(x_n,b_0),(a_n,y_1),\dots,(a_n,y_k)$ (for
simplicity we also denote this path by $(p,b_0);(a_n,q)$) and $0$ on
the rest of the paths. Let $\sum_i\lambda_ip_i\otimes q_i\in \Ker
(\alpha)$. Then we have that \begin{equation}
\label{ecinjal}\sum_i\sum_{w\in
W(p_i,q_i)}\lambda_iw=0\end{equation} Fix some $j$. Say that $p_j$
ends at $a_n$ and $q_j$ starts at $b_0$. We have that
$$(p_j^*,q_j^*)(w)=\left\{
\begin{array}{l}
0,\mbox{ if }w\in W(p_i,q_i),i\neq j\\
0, \mbox{ if }w\in W(p_j,q_j)\mbox{ and }w\neq (p_j,b_0),(a_n,q_j)\\
1, \mbox{ if }w=(p_j,b_0),(a_n,q_j)
\end{array}\right.$$
Note that we used the fact that $W(p,q)\cap W(p',q')=\emptyset$ for
$(p,q)\neq (p',q')$. Now applying $(p_j^*,q_j^*)$ to (\ref{ecinjal})
we see that $\lambda_j=0$. We conclude that $\alpha$ is injective.
\end{proof}

Combining the above, we derive a result about tensor products of
certain coreflexive coalgebras. It is known that a tensor product of
a coreflexive and a strongly coreflexive coalgebra is coreflexive
(see \cite{Rad}; see also \cite{T2}). It is not known whether the
tensor product of coreflexive coalgebras is necessarily coreflexive.
We have the following consequences.

\begin{proposition}\label{CxD2}
Let $C,D$ be co-reflexive subcoalgebras of path coalgebras $K\Gamma$
and $K\Gamma'$ respectively such that between any two vertices in
$\Gamma$ and $\Gamma'$ respectively there are only finitely many
paths. Then $C\otimes D$ is co-reflexive.
\end{proposition}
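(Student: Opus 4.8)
The plan is to exhibit $C\otimes D$ as a subcoalgebra of a path coalgebra whose quiver has only finitely many paths between any two vertices, and then to apply Proposition \ref{c.ref}, thereby reducing the coreflexivity of $C\otimes D$ to that of its coradical.

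First I would pass to the quiver $\Gamma\times\Gamma'$. The coalgebra inclusions $C\hookrightarrow K\Gamma$ and $D\hookrightarrow K\Gamma'$ give a coalgebra embedding $C\otimes D\hookrightarrow K\Gamma\otimes K\Gamma'$, and composing with the injective coalgebra morphism $\alpha\colon K\Gamma\otimes K\Gamma'\hookrightarrow K(\Gamma\times\Gamma')$ of Lemma \ref{prodquiver} realizes $C\otimes D$ (up to isomorphism) as a subcoalgebra of $K(\Gamma\times\Gamma')$. Next I would observe that $\Gamma\times\Gamma'$ has only finitely many paths between any two vertices: as in the discussion of products of quivers above, a path in $\Gamma\times\Gamma'$ from $(a,a')$ to $(b,b')$ corresponds to the data of a path $p$ from $a$ to $b$ in $\Gamma$, a path $q$ from $a'$ to $b'$ in $\Gamma'$, and a lattice walk in the grid determined by the lengths of $p$ and $q$; by hypothesis there are only finitely many such $p$ and $q$, and only finitely many lattice walks for each pair, hence finitely many paths in all. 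Therefore Proposition \ref{c.ref} applies to $C\otimes D$ as a subcoalgebra of $K(\Gamma\times\Gamma')$, and yields that $C\otimes D$ is coreflexive if and only if $(C\otimes D)_0$ is coreflexive.

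It then remains to check that $(C\otimes D)_0$ is coreflexive. Being a subcoalgebra of the path coalgebra $K(\Gamma\times\Gamma')$, the coalgebra $C\otimes D$ is pointed, and its coradical is spanned by the vertices it contains; these are exactly the $(a,a')$ with $a$ a vertex lying in $C$ and $a'$ a vertex lying in $D$, so $(C\otimes D)_0=C_0\otimes D_0\cong K^{(X\times Y)}$, where $X$ and $Y$ are the sets of grouplike elements of $C$ and $D$. Since $C$ is coreflexive, so is its subcoalgebra $C_0=K^{(X)}$ by \cite[Proposition 3.1.4]{HR}, and hence $X$ is nonmeasurable by \cite[Theorem 3.7.3]{HR}; similarly $Y$ is nonmeasurable. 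As the class of nonmeasurable sets is closed under products, $X\times Y$ is nonmeasurable, so $(C\otimes D)_0\cong K^{(X\times Y)}$ is coreflexive by \cite[Theorem 3.7.3]{HR}. Combined with the previous paragraph, this shows $C\otimes D$ is coreflexive.

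The only genuinely delicate points are the identification $(C\otimes D)_0=C_0\otimes D_0$ — i.e., that the grouplikes of a tensor product of pointed coalgebras are the products of grouplikes — and the translation of coreflexivity of the coradical into the set-theoretic nonmeasurability of the underlying set via Heyneman--Radford. Once these are in hand, the remaining work (the embedding through $\alpha$ and the count of paths in $\Gamma\times\Gamma'$) is entirely routine given Lemma \ref{prodquiver} and Proposition \ref{c.ref}. I expect the identification of $(C\otimes D)_0$ to be the main place where care is needed, precisely because it is what lets us reduce to a grouplike coalgebra and so to the cardinality hypothesis implicit in coreflexivity.
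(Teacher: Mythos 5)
Your overall strategy is exactly the paper's: embed $C\otimes D$ into $K(\Gamma\times\Gamma')$ via Lemma \ref{prodquiver}, note that $\Gamma\times\Gamma'$ again has only finitely many paths between any two vertices (path $=$ path in $\Gamma$ $+$ path in $\Gamma'$ $+$ lattice walk), apply Proposition \ref{c.ref} to reduce to the coradical, and identify $(C\otimes D)_0$ with a grouplike coalgebra on a product of sets. The identification $(C\otimes D)_0=C_0\otimes D_0$ that you single out as delicate is fine (the paper handles it by first replacing $\Gamma,\Gamma'$ by subquivers so that $C_0=(K\Gamma)_0$ and $D_0=(K\Gamma')_0$, and then squeezing $(C\otimes D)_0$ between $C_0\otimes D_0$ and the coradical $K^{(\Gamma_0\times\Gamma'_0)}$ of $K(\Gamma\times\Gamma')$).

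The one step where you diverge, and where your argument has a gap, is the coreflexivity of $K^{(X\times Y)}$. You argue: $C_0=K^{(X)}$ coreflexive $\Rightarrow$ $X$ nonmeasurable, by Heyneman--Radford. That is the \emph{converse} of what the paper states and uses from \cite[Theorem 3.7.3]{HR}; the paper only asserts ``$X$ nonmeasurable $\Rightarrow$ $K^{(X)}$ coreflexive,'' and the reverse implication is not available here (it is at best field-dependent, and nothing in the paper licenses it). The paper avoids this entirely with a cardinality trick: if $X\times Y$ is infinite then ${\rm card}(X\times Y)=\max\{{\rm card}(X),{\rm card}(Y)\}$, so $K^{(X\times Y)}$ is isomorphic as a coalgebra to $K^{(X)}$ or to $K^{(Y)}$, each of which is coreflexive simply because it is the coradical (hence a subcoalgebra) of the coreflexive coalgebra $C$ or $D$. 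You should replace your nonmeasurability detour by this cardinality argument; with that substitution the proof is complete and coincides with the paper's.
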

\begin{proof}
Without any loss of generality we may assume that
$C_0=(K\Gamma)_0=K^{(\Gamma_0)}$ and
$D_0=(K\Gamma')_0=K^{(\Gamma_0)}$ (otherwise we replace $\Gamma$ and
$\Gamma'$ by appropriate subquivers), where $K^{(\Gamma_0)}$ denotes
the grouplike coalgebra with basis the set $\Gamma_0$ of vertices of
$\Gamma$. Now $C\otimes D$ is a subcoalgebra of $K\Gamma\otimes
K\Gamma'$, so by Lemma \ref{prodquiver}, it also embeds in $K(\Gamma
\times \Gamma')$. Since the coradical of $K(\Gamma \times \Gamma')$
is $K^{(\Gamma_0\times \Gamma'_0)}$, and $K^{(\Gamma_0\times
\Gamma'_0)}\simeq K^{(\Gamma_0)}\otimes K^{(\Gamma'_0)}=C_0\otimes
D_0\subseteq C\otimes D$, we must have that $(C\otimes
D)_0=K^{(\Gamma_0\times \Gamma'_0)}$. We claim that
$K^{(\Gamma_0\times \Gamma'_0)}$ is coreflexive. Indeed, this is
obvious if $\Gamma_0$ and $\Gamma'_0$ are both finite. Otherwise,
${\rm card}(\Gamma_0\times \Gamma'_0)=\max\{{\rm
card}(\Gamma_0),{\rm card}(\Gamma'_0)\}$, hence $K^{(\Gamma_0\times
\Gamma'_0)}$ is isomorphic either to $K^{(\Gamma_0)}$ or to
$K^{(\Gamma'_0)}$, both of which are coreflexive by Proposition
\ref{c.ref}. Since it is clear that in $\Gamma\times \Gamma'$ there
are also finitely many paths between any two vertices, we can use
Proposition \ref{c.ref} to show that $C\otimes D$ is coreflexive.
\end{proof}

\begin{corollary}\label{CxD1}
If $C,D$ are coreflexive subcoalgebras of incidence coalgebras, then
$C\otimes D$ is coreflexive.
\end{corollary}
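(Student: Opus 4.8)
The plan is to reduce Corollary~\ref{CxD1} directly to Proposition~\ref{CxD2}. The key observation is that every incidence coalgebra $KX$ embeds, by the result of \cite{din} recalled at the beginning of Section~\ref{sectincidence}, into the path coalgebra $K\Gamma$ of the quiver $\Gamma$ built from the Hasse diagram of $X$ (vertices the elements of $X$, an arrow $x\to y$ when $x<y$ with no intermediate element). Crucially, since $X$ is locally finite, between any two vertices of this $\Gamma$ there are only finitely many paths: a path from $x$ to $y$ corresponds to a chain $x=z_0<z_1<\dots<z_n=y$ inside the finite interval $[x,y]=\{z\mid x\le z\le y\}$, and a finite set has only finitely many chains.

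So first I would take coreflexive subcoalgebras $C\subseteq KX$ and $D\subseteq KX'$ of incidence coalgebras. Composing the inclusions with the embeddings $KX\hookrightarrow K\Gamma$ and $KX'\hookrightarrow K\Gamma'$ of \cite{din}, we realize $C$ and $D$ as subcoalgebras of path coalgebras $K\Gamma$, $K\Gamma'$ in which any two vertices are joined by only finitely many paths. That is exactly the hypothesis of Proposition~\ref{CxD2}, so we conclude immediately that $C\otimes D$ is coreflexive.

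There is essentially no obstacle here; the only point that needs to be checked is the ``finitely many paths'' property of the Hasse quiver of a locally finite poset, which is the short combinatorial remark above and is in any case already implicit in the discussion preceding Corollary~\ref{c.incoref} (it is used there in exactly the same way). Everything else is a direct invocation of Proposition~\ref{CxD2}. The proof is therefore just the two sentences: embed the incidence coalgebras into the associated path coalgebras via \cite{din}, note these path coalgebras have finitely many paths between any two vertices, and apply Proposition~\ref{CxD2}.
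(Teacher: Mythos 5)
Your proof is correct and is essentially identical to the paper's: both embed $C$ and $D$ into the path coalgebras of the Hasse quivers via the map of \cite{din}, observe that local finiteness of the posets gives only finitely many paths between any two vertices, and then apply Proposition~\ref{CxD2}. Your explicit verification of the finitely-many-paths condition is a welcome detail that the paper leaves implicit.
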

\begin{proof}
It follows immediately from the embedding of $C$ and $D$ in path
coalgebras verifying the hypothesis of Proposition \ref{CxD2}.
\end{proof}


\bigskip\bigskip\bigskip

\begin{center}
\sc Acknowledgment
\end{center}
The authors would like to greatly acknowledge the very careful
reading and detailed comments of the referee, which led to many
improvements mathematics and exposition wise. In particular, we are
in debt for questions and suggestions which prompted the expansion
of Sections 3, 4 and 5 with several new results.

The research of this paper was partially supported by the UEFISCDI
Grant PN-II-ID-PCE-2011-3-0635, contract no. 253/5.10.2011 of
CNCSIS. For the second author, this work was supported by the
strategic grant POSDRU/89/1.5/S/58852, Project ``Postdoctoral
program for training scientific researchers'' cofinanced by the
European Social Fund within the Sectorial Operational Program Human
Resources Development
2007-2013. 

\bigskip\bigskip\bigskip

\end{document}